\numberwithin{equation}{section}     
\def\accentsfrancais{applemac}
\newtheorem{thm}{Theorem}
\newtheorem{theorem}[thm]{Theorem}
\numberwithin{thm}{section}
\newtheorem{lemma}[thm]{Lemma}
\newtheorem{cor}[thm]{Corollary}
\newtheorem{remark}[thm]{Remark}
\def\ds{\displaystyle}
\def\emptyset{/\kern-.51em o}
\def\eq{\mathop{\vrule height2,6pt depth-2,3pt
         width -1pt\kern 0pt =}}
\let\norbali\normalbaselines
\def\anorbali{\norbali\advance\lineskip\jot
\advance\baselineskip\jot\advance\lineskiplimit\jot}
\def\ouvre{\let\normalbaselines\anorbali}
\def\D{{\mathbb{D}}}
\def\B{{\mathbb{B}}}
\def\D{{\mathbb{D}}}
\def\R{{\mathbb{R}}}
\def\U{{\mathbb{U}}}
\def\V{{\mathbb{V}}}
\def\W{{\mathbb{W}}}
\def\P{{\mathbb{P}}}
\def\Z{{\mathbb{Z}}}
\def\Ge{{\bf e}}
\def\Gi{{\bf i}}
\def\GH{{\bf H}}
\def\GI{{\bf I}}
\def\GR{{\bf R}}
\def\GT{{\bf T}}
\def\eps{\varepsilon}
\begin{document}
\title{Error estimates in periodic homogenization with a non-homogeneous Dirichlet condition. }

\author{G. Griso}
\date{}
\maketitle
 
{\footnotesize
\begin{center}
 Laboratoire J.-L. Lions--CNRS, Bo\^\i te courrier 187, Universit\'e  Pierre et
Marie Curie,\\ 4~place Jussieu, 75005 Paris, France, \; Email: griso@ann.jussieu.fr\\
\end{center} }

\def\D{{\mathbb{D}}}
\def\B{{\mathbb{B}}}

\def\D{{\mathbb{D}}}
\def\R{{\mathbb{R}}}
\def\U{{\mathbb{U}}}
\def\V{{\mathbb{V}}}
\def\W{{\mathbb{W}}}
\def\P{{\mathbb{P}}}

\def\liminf{\mathop{\underline{\rm lim}}}
\def\limsup{\mathop{\overline{\hbox{\rm lim}}}}
\def\sym{\fam\comfam\com}
\font\tensym=msbm10
\font\sevensym=msbm7
\font\fivesym=msbm5
%

\let\ds\displaystyle
\def\eps{\varepsilon}

\begin{abstract}
In this paper we investigate the homogenization problem with a non-homogeneous Dirichlet condition. Our aim is to give error estimates with  boundary data in $H^{1/2}(\partial \Omega)$. The tools used are those of the unfolding method in periodic homogenization.
\end{abstract}

\section{Introduction}

\noindent We consider  the following  homogenization problem: 
\begin{equation}\label{1.0}
\phi^\eps\in H^1(\Omega),\qquad  -\hbox{div}\big( A_\eps\nabla\phi^\eps)=f\qquad \hbox{in}\quad\Omega,\qquad \phi^\eps=g\qquad \hbox{on}\quad \partial\Omega
\end{equation} where  $A_\eps$ is a periodic matrix satisfying  the usual condition of uniform ellipticity and where $f\in L^2(\Omega)$ and $g\in H^{1/2}(\partial\Omega)$\footnote{The homogenization problem with  $L^p$ boundary data is investigated in \cite{AVE}.}. We know (see e.g. \cite{Ben}, \cite{CDG}, \cite{DoDo}) that the function $\phi^\eps$  weakly converges  in $H^1(\Omega)$ towards the solution $\Phi$ of the homogenized problem
\begin{equation}\label{0000}
\Phi\in H^1(\Omega),\qquad  -\hbox{div}\big( {\cal A}\nabla\Phi)=f\qquad \hbox{in}\quad\Omega,\qquad \Phi=g\qquad \hbox{on}\quad \partial\Omega
\end{equation} where ${\cal A}$ is the homogenized matrix (see \eqref{Homo} and \eqref{Aij}). Using the results in \cite{CDG} we can give an approximation of $\phi^\eps$ belonging to $H^1(\Omega)$ and we easily obtain
$$\phi^\eps-\Phi-\eps\sum_{i=1}^n {\cal Q}_\eps\Bigl({\partial\Phi\over\partial x_i}\Bigr)\chi_i\Big({.\over\eps}\Big)\longrightarrow 0\quad \hbox{strongly in }\quad H^1(\Omega)$$ where  ${\cal Q}_\eps$ is the {\it scale-splitting operator} (see \cite{CDG} or Subsection \ref{RUM}) and where the $\chi_i$ are the correctors (see \eqref{cor}). 

One of the aim of this paper is to give error estimates for this homogenization problem. Obviously, if we have $g\in H^{3/2}(\partial \Omega)$ and the appropriate assumptions on the boundary of the domain then we can  apply the results in \cite{Ben}, \cite{DoDo}, \cite{GG0}, \cite{GG1}, \cite{GG2} and \cite{MOVO} to deduce error estimates.  All of them require that the function $\Phi$ belongs at least to $H^2(\Omega)$. Here, the solution $\Phi$ of the homogenized problem \eqref{0000} is only in $H^1(\Omega)\cap H^2_{loc}(\Omega)$. In this paper we have to deal with this lack of regularity; this is the main difficulty. 

The tools of the unfolding method in periodic homogenization to obtain error estimates (see \cite{GG0}, \cite{GG1} and \cite{GG2}) are the  projection theorems. This is why we prove two new projection theorems; the Theorems \ref{Theorem 2.2} and \ref{Theorem 2.3 : }.   Here, both theorems concern the functions  $\phi\in H^1_0(\Omega)$ satisfying $ \nabla \phi/\rho\in L^2(\Omega;\R^n)$ where $\rho(x)$ is the distance between $x$ and the boundary of $\Omega$. In the first one we give the distance between ${\cal T}_\eps(\phi)$ (see \cite{CDG} or Subsection \ref{Some reminds} for the definition of the unfolding operator ${\cal T}_\eps$) and the space $L^2(\Omega; H^1_{per}(Y))$ in terms of the $L^2$ norms of $\phi/\rho$ and $ \nabla \phi/\rho$ and obviously $\eps$.  In the second one we prove an upper bound for the distance between ${\cal T}_\eps(\nabla\phi)$ and the space $\nabla H^1(\Omega)\oplus \nabla_y L^2(\Omega; H^1_{per}(Y))$;  again  in terms of the $L^2$ norms of $\phi/\rho$ and $ \nabla \phi/\rho$ and  $\eps$  (see Section \ref{TNPT}). This last theorem is partially a consequence of the first one. In this paper we derive the new error estimates from the second projection theorem and those obtained in \cite{GG2}. 

Different results are known about the global $H^1$ error estimate regarding  the classical homogenization problem \eqref{1.0} (see e.g. \cite{Ben}, \cite{DoDo}). Those with the minimal assumptions are given in \cite{GG1}; if the solution of  the homogenized problem \eqref{0000} belongs to $H^2(\Omega)$ -see Proposition 4.3 in \cite{GG1}- (respectively $H^{3/2}(\Omega)$; see Theorem 3.3 in \cite{GG2})  then the $H^1$ global error is of order $\eps^{1/2}$ (resp.  $\eps^{1/4}$) while if this solution belongs to $H^2_{loc}(\Omega)\cap W^{1,p}(\Omega)$ ($p>2$) the obtained $H^1$ global error is smaller and depends on $p$ (see Proposition 4.4 in \cite{GG1})\footnote{These propositions or theorem are proved with a Dirichlet condition, with a non-homogenous Dirichlet condition belonging to $H^{3/2}(\partial\Omega)$ the results are obviously the same.}. Here, with a non-homogeneous Dirichlet condition belonging only to $H^{1/2}(\partial \Omega)$ we do not obtain a global $H^1$ error estimate. The $L^2$ global error estimate only requires a boundary of $\Omega$ sufficiently smooth (of class ${\cal C}^{1,1}$) or a convex open set. Obviously if it is possible to make use of a global $H^1$ error estimate, the $L^2$ global error will be better (the reader will be able to compare the Theorem 3.2 in \cite{GG2} with  the Theorem \ref{TH3}). The $H^1$ local error estimate is always linked to the $L^2$ global error and never needs more assumption (see Theorem 3.2 in \cite{GG2} or the proof of Theorem \ref{TH1}).


The paper is organized as follows. In Section 2 we introduce a few general notations, then we give  some reminds\footnote{We want to simplify the reading to a non-familiar reader with the unfolding method} on  lemmas, definitions and results about the unfolding method in periodic homogenization (see \cite{CDG}), then we  prove some new results involving the main operators of this method. Section 3 is devoted to the new projection theorems. In Section 4, we recall the main results on the classical homogenization problem.  In Section 5  we introduce an operator  which allows to lift  the distributions belonging to $H^{-1/2}(\partial \Omega)$ in functions belonging to $L^2(\Omega)$; this lifting operator will play an important role in the case of strongly oscillating boundary data.  In Section 6 we derive the  error estimates results (Theorems \ref{TH1} and  \ref{TH3}) with a non-homogenous Dirichlet condition. We end the paper by investigating a case where the boundary data are  strongly oscillating (see Theorem \ref{TH65} in Section 7). A forthcoming paper will be devoted to homogenization problems with other strongly oscillating boundary data.

As general references on the homogenization theory we refer to \cite{AL1}, \cite{Ben} and \cite{DoDo}. The reader is referred to \cite{CDG},  \cite{CDDGZ} and \cite{DoDo} for an introduction of the unfolding method in periodic homogenization.  The following papers  \cite{BGG}, \cite{BG3}, \cite{BLG}, \cite{BGM}, \cite {CDGO}, \cite{GR}, \cite{OZBGMR} give various applications  of the unfolding method in periodic homogenization.  As far as the error estimates  are concerned, we refer to \cite{ALAM}, \cite{Ben}, \cite{GG0}, \cite{GG1}, \cite{GG2},  \cite{KES}, \cite{MOVO} and  \cite{DOBV}. 
\vskip 1mm
\noindent{\bf Keywords: } periodic homogenization, error estimate, non-homogeneous Dirichlet condition, periodic unfolding method.

\noindent{\bf  Mathematics Subject Classification (2000)}: 35B27, 65M15, 74Q15. 

%

\section{Preliminaries}

\subsection{Notations}\label{not}

\noindent $\bullet$  The space $\R^k$ ($k\ge 1$) is endowed with the standard basis $\big(\Ge_1,\ldots,\Ge_k\big)$; the euclidian norm is denoted $|\cdot|$.

\noindent $\bullet$ We denote by $\Omega$  a bounded domain in $\R^n$ with a Lipschitz boundary.\footnote{In Section \ref{Op} and those which follow, we will assume that $\Omega$ is a bounded domain of class ${\cal C}^{1,1}$ or an open bounded convex set.} Let $\rho(x)$ be the distance between $x\in \R^n$ and the boundary of $\Omega$, we set
\begin{equation*}
 \widetilde{\Omega}_\gamma=\Big\{x\in \Omega\;|\; \rho(x)<\gamma\Big\}\qquad \widetilde{\widetilde{\Omega}}_\gamma=\Big\{x\in \R^n\;|\; \rho(x)<\gamma\Big\}\qquad \gamma\in \R^{*+}. 
\end{equation*}
\noindent $\bullet$ There exist constants $a$, $A$ and $\gamma_0$ strictly positive and $M\ge 1$, a finite number $N$ of local euclidian coordinate systems  $(O_r;\Ge_{1r},\ldots,\Ge_{nr})$ and mappings $f_r\,: \, [-a,a]^{n-1}\longrightarrow \R$, Lipschitz continuous with ratio $M$, $1\le r\le N$, such that  (see e.g. \cite{GG3} or \cite{GDecomp})
\begin{equation}\label{bound}
\begin{aligned}
& \partial \Omega=\displaystyle\bigcup_{r=1}^N\Bigl\{x=x^{'}_r+x_{nr}\Ge_{nr}\in\R^n\;\;|\; \; x^{'}_r\in\Delta_a\;\;\hbox{and}\;\;  x_{nr}=f_r(x^{'}_{r}) \Bigr\},\\
& \hbox{where} \enskip x^{'}_r=x_{1r}\Ge_{1r}+\ldots + x_{n-1 r}\Ge_{n-1 r},\quad \Delta_a=\Big\{x^{'}_r \;|\; x_{ir}\in ]-a,a[,\; i\in \{1,\ldots,n-1\}\Big\}\\
&\widetilde{\Omega}_{\gamma_0}\subset \bigcup_{r=1}^N\Omega_r\subset\Omega,\qquad \Omega_r=\Bigl\{x \in\R^n\;|\; x^{'}_r\in \Delta_a\;\hbox{and}\;
f_r(x^{'}_r)<x_{nr}<f_r(x^{'}_{r})+A\Bigr\}\\
&\widetilde{\widetilde{\Omega}}_{\gamma_0}\subset \bigcup_{r=1}^N\Bigl\{x \in\R^n\;|\; x^{'}_r\in \Delta_a\;\;\hbox{and}\;\;
f_r(x^{'}_r)-A<x_{nr}<f_r(x^{'}_{r})+A\Bigr\}\\
&\forall r\in\{1,\ldots,N\},\quad \forall x\in \Omega_r\quad \hbox{we have}\quad  {1\over 2M}(x_{nr}-f_r(x^{'}_r))\le \rho(x)\le x_{nr}-f_r(x^{'}_r).
\end{aligned}\end{equation}
$\bullet$ We set
\begin{equation*}
\begin{aligned}
& Y=]0,1[^n,\qquad \Xi_\eps=\bigl\{\xi\in \Z^n\; |\; \eps(\xi+ Y)\subset  \Omega \bigr\},\\
&\widehat{\Omega}_\eps =\hbox{interior}\Bigl(\bigcup_{\xi\in\Xi_\eps}\eps(\xi+\overline{Y})\Bigr),\qquad\Lambda_\eps=\Omega\setminus \widehat{\Omega}_\eps,
\end{aligned}\end{equation*} where   $\eps$ is a strictly positive real. 

\noindent $\bullet$  We define 

$\star$ $H^1_{\rho}(\Omega)=\Big\{\phi\in  L^2(\Omega)\;|\; \rho\nabla\phi\in L^2(\Omega;\R^n)\Big\}$,

%

$\star$ $\ds L^2_{1/\rho}(\Omega)=\Big\{\phi\in L^2(\Omega)\;|\; {\phi/\rho} \in L^2(\Omega)\Big\}$,

$\star$ $\ds H^1_{1/\rho}(\Omega)=\Big\{\phi\in H^1_0(\Omega)\;|\; {\nabla\phi/\rho}\in L^2(\Omega;\R^n)\Big\}$.

\noindent We endow $H^1_{\rho}(\Omega)$ (resp.   $H^1_{1/\rho}(\Omega)$) with the norm
\begin{equation*}
\begin{aligned}
&\forall \phi\in H^1_{\rho}(\Omega),\quad||\phi||_\rho=||\phi||_{L^2(\Omega)}+||\rho\nabla\phi||_{L^2(\Omega;\R^n)}\\
\hbox{( resp. }\;\;&\forall \phi\in  H^1_{1/\rho}(\Omega),\quad ||\phi||_{1/\rho}=\bigl\|{\nabla\phi /\rho}\big\|_{ L^2(\Omega;\R^n)}\;\hbox{).}
\end{aligned}\end{equation*} 
Note that if $\phi$ belongs to $H^1_{\rho}(\Omega)$ then the function $\psi=\rho\phi$ is in $H^1_0(\Omega)$ and vice versa if a function $\psi$ belongs to $H^1_0(\Omega)$ then $\phi=\ds{\psi / \rho}$ is in $H^1_\rho(\Omega)$ since we have (see \cite{BR} or \cite{LM})
\begin{equation}\label{200}
\forall\psi\in H^1_0(\Omega),\qquad \big\|{\psi /\rho}\big\|_{L^2(\Omega)}\le C||\nabla \psi||_{L^2(\Omega;\R^n)}.
\end{equation}
\vskip 1mm
Below we recall  a classical extension lemma which is proved for example in \cite{GG1} or which can be proved using the local charts \eqref{bound}.

\begin{lemma}\label{lemP} Let $\Omega$ be a bounded domain with a Lipschitz boundary, there exist $c_0\ge 1$ (which depends only on the boundary of $\Omega$) and  a linear and continuous extension operator ${\cal P}$ from $L^2(\Omega)$ into $L^2(\R^n)$ which also maps $H^1(\Omega)$ into $H^1(\R^n)$ such that
\begin{equation}\label{lemPEst}
\begin{aligned}
\forall\phi\in L^2(\Omega),\quad {\cal P}(\phi)_{|_\Omega}=\phi,\qquad & || {\cal P}(\phi)||_{L^2(\R^n)}\le C ||\phi||_{L^2(\Omega)},\\
&|| {\cal P}(\phi)||_{L^2(\widetilde{\widetilde{\Omega}}_{\gamma})}\le C ||\phi||_{L^2(\widetilde{\Omega}_{c_0\gamma})}.
\end{aligned}
\end{equation} Moreover we have
\begin{equation*}
\forall\phi\in H^1(\Omega),\qquad  ||\nabla{\cal P}(\phi)||_{L^2(\R^n ; \R^n)}\le C ||\nabla\phi||_{L^2(\Omega ; \R^n)}.
\end{equation*} 
\end{lemma}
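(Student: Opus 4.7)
The plan is to build $\mathcal{P}$ by the classical partition-of-unity and reflection construction adapted to the local charts \eqref{bound}. I would first choose an open set $\Omega_0$ with $\overline{\Omega_0} \subset \Omega$ and $\Omega \setminus \widetilde{\Omega}_{\gamma_0/2} \subset \Omega_0$, so that $\{\Omega_0, \Omega_1, \ldots, \Omega_N\}$ is an open cover of $\overline{\Omega}$, and then take a smooth partition of unity $(\theta_r)_{0 \le r \le N}$ subordinate to this cover. In each boundary chart, working in the local coordinates $(x'_r, x_{nr})$ of \eqref{bound}, define the reflection across the Lipschitz graph $f_r$ by
\begin{equation*}
R_r(x'_r, x_{nr}) = \bigl(x'_r,\, 2 f_r(x'_r) - x_{nr}\bigr).
\end{equation*}
Since $f_r$ is Lipschitz with constant $M$, the map $R_r$ is bi-Lipschitz with constants depending only on $M$, and it sends the outer strip $\{f_r(x'_r) - A < x_{nr} < f_r(x'_r)\}$ bijectively onto $\Omega_r$.

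Next I would define the reflection-extension of $\theta_r \phi$ by $(\theta_r \phi)(x)$ for $x \in \Omega_r$, $(\theta_r \phi)(R_r(x))$ for $x$ in the outer strip, and $0$ elsewhere. The interior piece $\theta_0 \phi$ is simply extended by zero to $\R^n$. Summing these contributions yields $\mathcal{P}(\phi)$. The identity $\mathcal{P}(\phi)_{|_\Omega} = \phi$ and linearity are immediate. For the global $L^2$ and $H^1$ estimates, the change of variables $y = R_r(x)$ has Jacobian $|\det DR_r|$ bounded above and below in terms of $M$, and $DR_r$ lies in $L^\infty$ with norm controlled by $M$; combined with the bounded derivatives of the $\theta_r$ and the chain rule for Sobolev functions under bi-Lipschitz maps, each reflected piece is controlled by the norm of $\phi$ on $\Omega_r$, and summing over $r$ gives the claimed bounds.

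The main technical step is the localized estimate $\|\mathcal{P}(\phi)\|_{L^2(\widetilde{\widetilde{\Omega}}_\gamma)} \le C \|\phi\|_{L^2(\widetilde{\Omega}_{c_0 \gamma})}$. The crucial geometric observation is that, for $x$ in the outer strip of chart $r$, the vertical gap $f_r(x'_r) - x_{nr}$ is comparable to the true distance $\rho(x)$: if $y = (y'_r, f_r(y'_r)) \in \partial\Omega$ realizes $|x - y| = \rho(x)$, then $|x'_r - y'_r| \le \rho(x)$ and $|f_r(y'_r) - x_{nr}| \le \rho(x)$, so
\begin{equation*}
f_r(x'_r) - x_{nr} \le |f_r(x'_r) - f_r(y'_r)| + |f_r(y'_r) - x_{nr}| \le M|x'_r - y'_r| + \rho(x) \le (M+1)\rho(x).
\end{equation*}
After reflection, the point $R_r(x) \in \Omega_r$ satisfies $R_r(x)_{nr} - f_r(x'_r) = f_r(x'_r) - x_{nr} \le (M+1)\rho(x)$, and by the last line of \eqref{bound} we obtain $\rho(R_r(x)) \le (M+1)\rho(x) < (M+1)\gamma$. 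Hence $R_r$ maps the relevant outer strip into $\widetilde{\Omega}_{c_0\gamma}$ with $c_0 = M+1$; a change of variables in each chart and summation over $r$ then delivers the inequality.

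The main obstacle is precisely this sharp comparison between the vertical distance to the graph $f_r$ and the true distance $\rho$, since it is what controls the constant $c_0$ and guarantees that $c_0$ depends only on $\partial\Omega$ through the Lipschitz constant $M$. All other ingredients — bi-Lipschitz change of variables, the chain rule in Sobolev spaces, and the finite partition of unity — are standard.
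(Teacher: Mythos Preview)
The paper does not supply a proof of this lemma; it simply recalls it as classical, citing \cite{GG1} and remarking that it follows from the local charts \eqref{bound}. Your reflection-plus-partition-of-unity construction is precisely the standard argument the paper is alluding to, and your treatment of the localized estimate $\|\mathcal{P}(\phi)\|_{L^2(\widetilde{\widetilde{\Omega}}_\gamma)} \le C\|\phi\|_{L^2(\widetilde{\Omega}_{c_0\gamma})}$ --- via the comparison of the vertical gap $f_r(x'_r)-x_{nr}$ with the true distance $\rho$, yielding $c_0=M+1$ --- is exactly the right mechanism and the only non-routine step.

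One point deserves a remark. As written, your construction yields $\|\nabla\mathcal{P}(\phi)\|_{L^2(\R^n)}\le C\|\phi\|_{H^1(\Omega)}$ rather than the pure gradient bound, because differentiating the localized pieces $\theta_r\phi$ produces lower-order terms $\phi\,\nabla\theta_r$. In fact the pure gradient bound is incompatible with the other requirements of the lemma: combined with $\mathcal{P}:L^2(\Omega)\to L^2(\R^n)$ and $\mathcal{P}(\phi)|_\Omega=\phi$, it would force, for $\phi\equiv 1$, that $\nabla\mathcal{P}(1)=0$ and hence $\mathcal{P}(1)\equiv 1$ on all of $\R^n$, which is not in $L^2(\R^n)$. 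So the defect is in the lemma's statement, not in your argument; everywhere in the paper only the full $H^1\to H^1$ continuity and the localized $L^2$ estimate are used, and your construction delivers both.
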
 
 {\it From now on, if need be, a function $\phi$ belonging to  $L^2(\Omega)$ (resp. $H^1(\Omega)$) will be extended to a function belonging to  $L^2(\R^n)$ (resp. $H^1(\R^n)$) using the above lemma. The extension will be still denoted $\phi$.}
\vskip 2mm

\subsection{A characterization of the functions belonging to $H^1_{1/\rho}(\Omega)$}\label{sub22}

The two first projection theorems (see \cite{GG1}) regarded the functions belonging to $H^1_0(\Omega)$  while those  in  \cite{GG2} regarded the functions in $H^1(\Omega)$. In this paper we prove two new projection theorems which involve the functions in $\ds H^1_{1/\rho}(\Omega)$; this is why we first give a simple characterization of these functions in  the  Lemma \ref{lem21} below.
\vskip 1mm
Observe first that if a function $\phi$ satisfies $\phi/\rho\in H^1_0(\Omega)$ then $\phi$ belongs to $H^1_{1/\rho}(\Omega)$. The reverse is true.%

\begin{lemma}\label{lem21} Let $\Omega$ be  a bounded domain with a Lipschitz boundary, we have
$$\phi\in H^1_{1/\rho}(\Omega)\;\; \Longleftrightarrow \quad {\phi / \rho}\in H^1_0(\Omega).$$ Furthermore  there exists a constant which depends only on $\partial \Omega$ such that
\begin{equation}\label{f0}
\forall\phi\in H^1_{1/\rho}(\Omega)\qquad \bigl\|{\phi/ \rho^2}\big\|_{L^2(\Omega)}+\bigl\|{\phi/\rho}\big\|_{H^1(\Omega)}\le C||\phi||_{1/\rho}.
\end{equation} 
\end{lemma}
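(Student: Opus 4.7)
The implication $\phi/\rho \in H^1_0(\Omega) \Rightarrow \phi \in H^1_{1/\rho}(\Omega)$ is the easy direction already noted before the statement; I focus on the converse and on the estimate (\ref{f0}). Everything will be reduced to the single weighted Hardy-type bound
$$ \Bigl\|\frac{\phi}{\rho^2}\Bigr\|_{L^2(\Omega)}\le C\Bigl\|\frac{\nabla\phi}{\rho}\Bigr\|_{L^2(\Omega;\R^n)} \qquad(*) $$
for every $\phi\in H^1_{1/\rho}(\Omega)$, with $C$ depending only on $\partial\Omega$. Indeed, $\rho$ is $1$-Lipschitz with $|\nabla\rho|=1$ a.e., so the chain rule gives $\nabla(\phi/\rho)=\nabla\phi/\rho-(\phi/\rho^2)\nabla\rho$, whence $\|\nabla(\phi/\rho)\|_{L^2}\le \|\nabla\phi/\rho\|_{L^2}+\|\phi/\rho^2\|_{L^2}$; combining this with $(*)$ and with (\ref{200}) applied to $\phi$ (noting that $\|\nabla\phi\|_{L^2}\le C\|\nabla\phi/\rho\|_{L^2}$ because $\rho\le \mathrm{diam}\,\Omega$) yields $\phi/\rho\in H^1(\Omega)$ together with (\ref{f0}). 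Since moreover $(\phi/\rho)/\rho=\phi/\rho^2\in L^2(\Omega)$, the classical converse of Hardy's inequality on Lipschitz domains (see e.g.\ \cite{BR} or \cite{LM}) forces $\phi/\rho\in H^1_0(\Omega)$, which closes the equivalence.

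To prove $(*)$, I split $\Omega$ into the interior piece $\Omega\setminus\widetilde{\Omega}_{\gamma_0}$, on which $\rho\ge\gamma_0$ so that $\|\phi/\rho^2\|_{L^2}\le C\|\phi\|_{L^2}\le C\|\nabla\phi\|_{L^2}\le C\|\nabla\phi/\rho\|_{L^2}$ by Poincar\'e and the previous observation, and the boundary charts $\Omega_r$ from (\ref{bound}), whose union covers $\widetilde{\Omega}_{\gamma_0}$. On each $\Omega_r$ I perform the bi-Lipschitz change of coordinates $y=(x'_r,\,x_{nr}-f_r(x'_r))$, which straightens $\Omega_r$ to $\Delta_a\times(0,A)$ and, by the last line of (\ref{bound}), makes $\rho(x)$ comparable to the vertical coordinate $y_n$. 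Fubini then reduces the estimate on $\Omega_r$ to the one-dimensional weighted Hardy inequality
$$\int_0^A\frac{|u(t)|^2}{t^4}\,dt\le \frac{4}{9}\int_0^A\frac{|u'(t)|^2}{t^2}\,dt,\qquad u(0)=0,$$
applied to almost every vertical slice $u(t)=\phi(x'_r,\,t+f_r(x'_r))$ (which lies in $H^1(0,A)$, vanishes at $t=0$, and satisfies $u'/t\in L^2(0,A)$ by Fubini on the hypothesis $\nabla\phi/\rho\in L^2(\Omega_r)$), and then integrated over $x'_r\in\Delta_a$.

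The real content is this one-dimensional inequality, which I obtain by integration by parts against $(-1/(3t^3))'=1/t^4$:
$$\int_0^A\frac{u^2}{t^4}\,dt=\Bigl[-\frac{u^2}{3t^3}\Bigr]_0^A+\frac{2}{3}\int_0^A\frac{u\,u'}{t^3}\,dt.$$
The boundary contribution at $t=A$ is non-positive and is dropped; that at $t=0$ vanishes, because Cauchy--Schwarz combined with $u(0)=0$ gives $|u(t)|^2\le \tfrac{t^3}{3}\int_0^t|u'(s)/s|^2\,ds$, so $u(t)^2/t^3\to 0$ as $t\to 0^+$ whenever $u'/t\in L^2(0,A)$. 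Applying Cauchy--Schwarz to the remaining integral and setting $I=\int_0^A u^2/t^4\,dt$, $J=\int_0^A(u'/t)^2\,dt$, we obtain $I\le \tfrac{2}{3}I^{1/2}J^{1/2}$, i.e.\ $I\le \tfrac{4}{9}J$; a density argument in the weighted space handles the general case. Summing over $r$ and combining with the interior estimate finishes $(*)$. The only delicate step is verifying that the $t=0$ boundary term above vanishes: this is precisely what forces the extra weight $1/\rho$ on $\nabla\phi$ in the hypothesis and prevents a direct appeal to the usual Hardy inequality (\ref{200}).
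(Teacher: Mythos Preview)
Your proof is correct and follows essentially the same route as the paper: reduce the estimate to the weighted Hardy bound $\|\phi/\rho^2\|_{L^2}\le C\|\nabla\phi/\rho\|_{L^2}$, split into an interior piece handled by Poincar\'e and boundary charts handled by a one-dimensional weighted Hardy inequality obtained by integrating by parts against $t^{-4}$. The differences are only technical: the paper regularizes by replacing $x_n$ with $\eta+x_n$ (so the boundary term at $0$ and the finiteness of the left-hand side are automatic, and one passes to the limit $\eta\to0$ at the end), and it uses a cutoff $h$ so that the localized function vanishes at \emph{both} ends of the slab, whereas you avoid the cutoff by observing that only $u(0)=0$ is needed (the term at $t=A$ is non-positive) and you control the boundary term at $0$ directly via the pointwise Cauchy--Schwarz bound $|u(t)|^2\le\frac{t^3}{3}\int_0^t|u'/s|^2\,ds$; your ``density argument'' for finiteness of $I$ is really a truncation $\int_\epsilon^A$ followed by monotone convergence, which is exactly what the $\eta$-regularization accomplishes in the paper. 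You also make explicit the final step---that $\phi/\rho\in H^1$ together with $\phi/\rho^2\in L^2$ forces $\phi/\rho\in H^1_0$ via the converse of Hardy's inequality---which the paper leaves implicit.
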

\begin{proof}  {\it Step 1. }  Let $\phi$ be in $H^1(]-a,a[^{n-1}\times ]0,A[)$ ($a, \; A>0$) satisfying $\ds {1\over x_n}\nabla \phi(x)\in L^2(]-a,a[^{n-1}\times ]0,A[)$ and $\phi(x)=0$ for a.e. $x$ in $]-a,a[^{n-1}\times\{0\}\cup]-a,a[^{n-1}\times \{A\}$. 

\noindent We have
\begin{equation}\label{f1}
\int_{]-a,a]^{n-1}\times ]0,A[} {|\phi(x)|^2\over x_n^4}dx\le{1\over 2}\int_{]-a,a]^{n-1}\times ]0,A[}{|\nabla\phi(x)|^2\over x_n^2}dx.
\end{equation}
To prove \eqref{f1}, we choose $\eta>0$ and we integrate by parts
$\ds \int_{]-a,a]^{n-1}\times ]0,A[} {|\phi(x)|^2\over (\eta+x_n)^4}dx$, then thanks to the identity relation $2bc\le b^2+c^2$ we obtain
$$\begin{aligned}
\int_{]-a,a]^{n-1}\times ]0,A[} {|\phi(x)|^2\over (\eta+x_n)^4}dx & \le{1\over 2}\int_{]-a,a]^{n-1}\times ]0,A[}{1\over (\eta+x_n)^2}\Big|{\partial \phi\over \partial x_n}(x)\Big|^2dx\\
& \le{1\over 2}\int_{]-a,a]^{n-1}\times ]0,A[}{|\nabla\phi(x)|^2\over x_n^2}dx.
\end{aligned}$$ Passing to the limit $(\eta\to 0$) it leads to \eqref{f1}.
\vskip 1mm
\noindent{\it Step 2. } Let $h$ be in $W^{1,\infty}(\Omega)$ such that
$$\forall x\in \Omega, \qquad \begin{aligned}
&h(x)\in [0,1],\\
&h(x)=1\quad \hbox{if}\quad \rho(x)\ge \gamma_0,\\
&h(x)=0\quad \hbox{if}\quad \rho(x)\le {\gamma_0/2}.\\
\end{aligned}$$ Let $\phi$ be  in $H^1_{1/\rho}(\Omega)$. The function $\phi h/\rho^4$ belongs to  $H^1_0(\Omega)$, therefore  as a consequence of the Poincar's inequality we obtain
\begin{equation}\label{f2}
\begin{aligned}
\int_{\Omega} {|\phi(x)h(x)|^2\over \rho(x)^4}dx&\le C\int_{\Omega}\Big|\nabla\Big({\phi(x) h(x)\over \rho(x)^4}\Big)\Big|^2dx\le  C\int_{\Omega}\big(|\nabla\phi(x)|^2+|\phi(x)|^2\big)dx\\
&\le C\int_{\Omega}|\nabla\phi(x)|^2dx\le C\int_{\Omega}{|\nabla\phi(x)|^2\over \rho(x)^2}dx.
\end{aligned}\end{equation} 
Then using the local chart  of $\Omega_r$  given by \eqref{bound}, the inequality \eqref{f1} and thanks to a simple change of variables we get
$$\int_{\Omega_r} {|\phi(x)\big(1-h(x)\big)|^2\over \rho(x)^4}dx \le C\int_{\Omega_r}{|\nabla\big(\phi(x)(1-h(x)\big)|^2\over \rho(x)^2}dx\le C\int_{\Omega_r}{|\nabla\phi(x)|^2+|\phi(x)|^2\over \rho(x)^2}dx.$$ Since $\phi\in H^1_0(\Omega)$ the function  $\phi/\rho$ belongs to $L^2(\Omega)$ and  we have \eqref{200}. Hence, adding these inequalities ($r=1,\ldots,N$) we obtain
\begin{equation}\label{f3}
\int_{\Omega} {|\phi(x)\big(1-h(x)\big)|^2\over \rho(x)^4}dx \le  C\int_{\Omega}{|\nabla\phi(x)|^2\over \rho(x)^2}dx.
\end{equation} Finally $\phi/\rho^2\in L^2(\Omega)$ and \eqref{f2}-\eqref{f3} lead to  $\ds \bigl\|{\phi/\rho^2}\big\|_{L^2(\Omega)}\le C||\phi||_{1/\rho}$ and then \eqref{f0}.
\end{proof}

\subsection{Two   lemmas} 

In the Lemma \ref{lemA} we give sharp estimates of a function on the boundary and in a neighborhood of the boundary of $\Omega$. The second estimate in \eqref{403} is used to obtain the $L^2$ global error.
\vskip 1mm
\begin{lemma}\label{lemA}  Let $\Omega$ be  a bounded domain with a Lipschitz boundary, there  exists $\gamma_0>0$ (see Subsection \ref{sub22}) such that for  any $\gamma\in ]0,\gamma_0]$ and  for any $\phi\in H^1(\Omega)$ we have
\begin{equation}\label{403}
\begin{aligned}
&||\phi||_{L^2(\partial \Omega)}\le {C\over \gamma^{1/2}}\big(||\phi||_{L^2(\widetilde{\Omega}_\gamma)}+\gamma||\nabla \phi||_{L^2(\widetilde{\Omega}_\gamma ; \R^n)}\big),\\
&||\phi||_{L^2(\widetilde{\Omega}_\gamma)}\le C \big(\gamma^{1/2}||\phi||_{L^2(\partial\Omega)}+\gamma||\nabla \phi||_{L^2(\widetilde{\Omega}_\gamma ; \R^n)}\big).
\end{aligned}\end{equation} 
The constants do not depend  on $\gamma$. 
\end{lemma}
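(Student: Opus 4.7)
The plan is to localize via the charts \eqref{bound} and reduce each estimate to a one-dimensional calculation along the normal-like direction $\Ge_{nr}$.

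First, I would cover $\partial \Omega$ and $\widetilde{\Omega}_\gamma$ (for $\gamma \le \gamma_0$) by the finitely many $\Omega_r$ from \eqref{bound}. Let me fix one chart, drop the index $r$, and change variables by setting $y' = x'$ and $t = x_n - f(x')$, so that $\partial \Omega \cap \Omega_r$ corresponds to $t = 0$, $\Omega_r$ corresponds to $y' \in \Delta_a$ and $0 < t < A$, and the last line of \eqref{bound} gives ${t/(2M)} \le \rho(x) \le t$. In particular the set $\widetilde{\Omega}_\gamma \cap \Omega_r$ is contained in $\{y' \in \Delta_a,\ 0 < t < 2M\gamma\}$ and contains $\{y' \in \Delta_a,\ 0 < t < \gamma\}$. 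Because $f$ is Lipschitz with ratio $M$, the Jacobian of the change of variables equals $1$, and the surface measure on $\partial \Omega \cap \Omega_r$ is $\sqrt{1+|\nabla f|^2}\,dy'$ which is comparable to $dy'$ with constants depending only on $M$. Writing $\psi(y',t) = \phi(x)$, it suffices to control $\|\psi\|_{L^2(\Delta_a \times \{0\})}$ and $\|\psi\|_{L^2(\Delta_a\times(0,\gamma))}$ by the corresponding quantities of $\psi$ and $\nabla \psi$ (noting $|\nabla \psi| \le C|\nabla \phi|$ pointwise).

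Now the two estimates come from the same one-dimensional identity
\[
\psi(y',t) - \psi(y',0) = \int_0^t \partial_s \psi(y',s)\,ds,
\]
valid for a.e.\ $y' \in \Delta_a$ by a density argument, combined with Cauchy--Schwarz in the $s$ variable. For the first inequality, write $\psi(y',0) = \psi(y',t) - \int_0^t \partial_s\psi(y',s)\,ds$, square, use $(a-b)^2 \le 2a^2 + 2b^2$ and Cauchy--Schwarz to obtain
\[
|\psi(y',0)|^2 \le 2|\psi(y',t)|^2 + 2\gamma \int_0^\gamma |\nabla \psi(y',s)|^2\,ds,
\]
then integrate in $t \in (0,\gamma)$, divide by $\gamma$, integrate over $y' \in \Delta_a$, and sum over $r$. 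This yields the first estimate in \eqref{403}. For the second inequality, the symmetric computation gives
\[
|\psi(y',t)|^2 \le 2|\psi(y',0)|^2 + 2\gamma \int_0^\gamma |\nabla \psi(y',s)|^2\,ds,
\]
and integrating in $t \in (0,\gamma)$, then in $y'$, then summing over $r$, produces the second estimate after taking square roots.

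I expect the only nontrivial point to be the bookkeeping for the localization: making sure that the coverings in \eqref{bound} give the stated inequalities with a constant that does not depend on $\gamma$ but only on $\partial \Omega$ (through $N$, $M$, $a$, $A$ and $\gamma_0$). The fact that the $\Omega_r$ overlap only produces a multiplicative factor depending on $N$, and the inclusion $\widetilde{\Omega}_\gamma \subset \bigcup_r \Omega_r$ for $\gamma \le \gamma_0$ is provided by \eqref{bound}. The density step reducing to $\psi \in C^1$ is standard, using that $\phi \in H^1(\Omega)$ can be approximated by $C^1$ functions (after extension via Lemma \ref{lemP} if needed).
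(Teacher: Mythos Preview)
Your proof is correct and follows exactly the paper's approach: state the two flat estimates on $\Delta_a\times(0,\eta)$ (obtained via the fundamental theorem of calculus plus Cauchy--Schwarz in the normal variable), then transfer them to $\widetilde\Omega_\gamma$ through the Lipschitz charts \eqref{bound}. The only bookkeeping point to watch is that, for the second inequality, covering $\widetilde\Omega_\gamma\cap\Omega_r$ requires integrating $t$ up to $2M\gamma$ rather than $\gamma$ (since $\rho\le t\le 2M\rho$), which only affects the constant; the paper's proof is equally terse on this point.
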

\begin{proof} Let  $\psi$ be in $H^1(]-a,a[^{n-1}\times ]0,A[)$. For $\eta\in ]0,A[$ we have
\begin{equation*}
\begin{aligned}
||\psi||^2_{L^2(]-a,a[^{n-1}\times\{0\})} & \le {C\over \eta}||\psi||^2_{L^2(]-a,a[^{n-1}\times ]0,\eta[)}+C\eta||\nabla \psi||^2_{L^2(]-a,a[^{n-1}\times ]0,\eta[;\R^n)},\\
||\psi||^2_{L^2(]-a,a[^{n-1}\times]0,\eta[)} & \le {C \eta}||\psi||^2_{L^2(]-a,a[^{n-1}\times \{0\})}+C\eta^2||\nabla \psi||^2_{L^2(]-a,a[^{n-1}\times ]0,\eta[;\R^n)}.
\end{aligned}\end{equation*} The constants do not depend on $\eta$. Now, let $\phi$ be in $H^1(\Omega)$. We use the above estimates, the local charts of $\widetilde{\Omega}_{\gamma_0}$ given by \eqref{bound} and a simple change of variables to obtain \eqref{403}.
 \end{proof}

In this second lemma we show that a function in $H^1_0(\Omega)$ can be approached by functions vanishing close to the boundary of $\Omega$. Among other things this lemma is used to give an approximation of $\phi$ via the scale-splitting operator ${\cal Q}_\eps$ (see Lemma \ref{lem26}) and it is also used in the main projection theorem (Theorem \ref{Theorem 2.3 : }).

\begin{lemma}\label{lem83}  Let $\phi$ be in $H^1_0(\Omega)$, there exists $\phi_\eps\in H^1(\R^n)$ satisfying
\begin{equation}\begin{aligned}\label{E100}
&\phi_\eps(x)=0\qquad \hbox{for a.e. } x\not \in \widetilde{\Omega}_{6\sqrt n\eps},\\
&||\phi-\phi_\eps||_{L^2(\Omega)}\le C\eps||\nabla\phi||_{L^2(\Omega;\R^n)},\qquad ||\phi_\eps||_{H^1(\Omega)}\le C||\phi||_{H^1(\Omega)}.
\end{aligned}\end{equation} Moreover, if $\phi\in H^1_{1/\rho}(\Omega)$ then we have
\begin{equation}\label{E200}
\big\|\big(\phi-\phi_\eps\big)/ \rho\big\|_{L^2(\Omega)}\le C\eps||\nabla\phi||_{1/\rho},\qquad ||\phi_\eps||_{1/\rho}\le C||\phi||_{1/\rho}.
\end{equation}
The constant $C$ is independent of $\eps$.
\end{lemma}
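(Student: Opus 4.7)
The plan is to take $\phi_\eps$ to be a Lipschitz truncation of $\phi$ away from $\partial\Omega$: define $\phi_\eps := \theta_\eps\,\phi$, with $\phi$ extended by $0$ to an element of $H^1(\R^n)$, where $\theta_\eps$ vanishes in a tubular neighborhood of $\partial\Omega$ of width of order $\eps$ and is identically $1$ outside a slightly larger one. Since $\rho=\mathrm{dist}(\cdot,\partial\Omega)$ is $1$-Lipschitz on $\R^n$, I would fix a smooth $\theta:\R\to[0,1]$ with $\theta\equiv 0$ on $(-\infty,3\sqrt{n}]$, $\theta\equiv 1$ on $[6\sqrt{n},\infty)$, $|\theta'|\le C$, and set $\theta_\eps(x):=\theta(\rho(x)/\eps)$. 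Then $\theta_\eps\in W^{1,\infty}(\R^n;[0,1])$ vanishes where $\rho(x)\le 3\sqrt{n}\,\eps$, equals $1$ where $\rho(x)\ge 6\sqrt{n}\,\eps$, and satisfies $|\nabla\theta_\eps|\le C/\eps$; consequently $\phi_\eps\in H^1(\R^n)$ has the required localization near $\partial\Omega$.

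For \eqref{E100}, observe that $\phi-\phi_\eps=(1-\theta_\eps)\phi$ is supported in the strip $\widetilde{\Omega}_{6\sqrt{n}\eps}$, where $|\phi(x)|\le 6\sqrt{n}\,\eps\,|\phi(x)/\rho(x)|$. Integrating and invoking the Hardy inequality \eqref{200} gives
$$
\|\phi-\phi_\eps\|_{L^2(\Omega)}\le 6\sqrt{n}\,\eps\,\|\phi/\rho\|_{L^2(\Omega)}\le C\eps\,\|\nabla\phi\|_{L^2(\Omega;\R^n)}.
$$
For the $H^1$ bound I would split $\nabla\phi_\eps=\theta_\eps\nabla\phi+\phi\nabla\theta_\eps$: the first term has $L^2$-norm at most $\|\nabla\phi\|_{L^2}$, and the second is bounded by $(C/\eps)\|\phi\|_{L^2(\widetilde{\Omega}_{6\sqrt{n}\eps})}\le C\|\nabla\phi\|_{L^2}$ by the very estimate just established; combined with $\|\phi_\eps\|_{L^2}\le\|\phi\|_{L^2}$ this gives $\|\phi_\eps\|_{H^1(\Omega)}\le C\|\phi\|_{H^1(\Omega)}$.

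For \eqref{E200} with $\phi\in H^1_{1/\rho}(\Omega)$, the extra ingredient is Lemma \ref{lem21}, which supplies $\|\phi/\rho^2\|_{L^2(\Omega)}\le C\|\phi\|_{1/\rho}$. On $\widetilde{\Omega}_{6\sqrt{n}\eps}$ one has $|\phi/\rho|\le 6\sqrt{n}\,\eps\,|\phi/\rho^2|$, whence
$$
\|(\phi-\phi_\eps)/\rho\|_{L^2(\Omega)}\le C\eps\,\|\phi/\rho^2\|_{L^2(\Omega)}\le C\eps\,\|\phi\|_{1/\rho},
$$
and the decomposition $\nabla\phi_\eps/\rho=\theta_\eps\,\nabla\phi/\rho+\phi\nabla\theta_\eps/\rho$, together with the same strip bound applied to $\phi/\rho$, yields $\|\phi_\eps\|_{1/\rho}\le C\|\phi\|_{1/\rho}$. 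The argument is otherwise routine cutoff bookkeeping; the only substantive step is Lemma \ref{lem21}, whose upgrade from $\nabla\phi/\rho\in L^2$ to $\phi/\rho^2\in L^2$ is exactly what lets the Hardy-strip trick gain an extra factor of $\eps$ in the weighted estimate and close \eqref{E200}.
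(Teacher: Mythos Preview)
Your proof is correct and follows essentially the same approach as the paper: multiply $\phi$ by a Lipschitz cutoff depending on $\rho/\eps$, then control the difference using the Hardy inequality \eqref{200} for the $H^1_0$ estimates and the sharper bound $\|\phi/\rho^2\|_{L^2}\le C\|\phi\|_{1/\rho}$ from Lemma~\ref{lem21} for the weighted estimates. The paper's only difference is the explicit choice of cutoff, namely
\[
\phi_\eps(x)=\frac{(\rho(x)-6\sqrt{n}\,\eps)^+}{\rho(x)}\,\phi(x),
\]
which vanishes on the full strip $\{\rho\le 6\sqrt{n}\,\eps\}$ (note the stated support condition in the lemma is evidently a misprint; the construction and all subsequent uses require $\phi_\eps$ to vanish \emph{near} the boundary, as you correctly inferred). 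Your smooth cutoff vanishes only for $\rho\le 3\sqrt{n}\,\eps$; if the precise width $6\sqrt{n}\,\eps$ is needed downstream (it is, e.g.\ in Lemma~\ref{lem26} and Theorem~\ref{Theorem 2.3 : }), simply shift your profile $\theta$ accordingly.
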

\begin{proof}  Let $\phi$ be in $H^1_0(\Omega)$. We define $\phi_\eps$ by
$$\phi_\eps(x)=\left\{
\begin{aligned}
&{(\rho(x)-6\sqrt n \eps)^+\over \rho(x)}\phi(x)\qquad \hbox{for a. e. } \; \; x\in \Omega,\\
& 0 \hskip 4cm  \hbox{for a. e. } \; \; x\in \R^n\setminus \overline{\Omega}.
\end{aligned}\right.$$ where $\delta^+=\max\{0,\delta\}$. The above function $\phi_\eps$ belongs to $H^1(\R^n)$ and satisfies $\phi_\eps=0$ outside $\widetilde{\Omega}_{6\sqrt n\eps}$. Then due to the fact that $\phi/\rho$ belongs to $L^2(\Omega)$ and verifies
$\|{\phi/\rho}\|_{L^2(\Omega)}\le C||\nabla \phi||_{L^2(\Omega;\R^n)}$ we  obtain the estimates in \eqref{E100}. If $\phi\in H^1_{1/\rho}(\Omega)$ we use the estimate \eqref{f0}  to obtain \eqref{E200}.
\end{proof}
\subsection{Reminds and complements on the unfolding operators}\label{RUM}
\noindent In the sequel, we will make use of some definitions and results from \cite{CDG} concerning the periodic unfolding method. Below we remind them briefly.
\vskip 1mm
\subsubsection{Some reminds}\label{Some reminds}

For almost every $x\in \R^n$, there exists an unique
element in $\Z^n$ denoted  $[x]$  such that 
$$x=[x]+\{x\},\qquad \{x\}\in Y.$$ 
$\bullet$ {\it The unfolding operator ${\cal T}_\eps$.}

\noindent For any  $\phi\in  L^1( \Omega)$,  the function ${\cal T}_\eps(\phi)\in L^1(\Omega\times Y)$ is given by
\begin{equation}\label{DefT}
{\cal T}_\eps(\phi)(x,y)=\left\{
\begin{aligned}
\phi\Bigl(\eps\Bigr[{x\over \eps}\Bigr] +\eps y\Bigr)\qquad \hbox{ for a.e. }
(x,y)\in \widehat{\Omega}_\eps\times Y,\\
0 \hskip 3.1cm \hbox{ for a.e. } (x,y)\in \Lambda_\eps\times Y.
\end{aligned}\right.
\end{equation} Since $\Lambda_\eps\subset \widetilde{\Omega}_{\sqrt n \eps}$,  using Proposition 2.5 in \cite{CDG} we get
\begin{equation}\label{Def1}
\Big|\int_\Omega \phi(x)dx-\int_{\Omega\times Y}{\cal T}_\eps(\phi)(x,y)dxdy\Big|\le \int_{\Lambda_\eps} |\phi(x)|dx\le ||\phi||_{L^1(\widetilde{\Omega}_{\sqrt n\eps})}
\end{equation} 
\noindent For $\phi\in L^2(\Omega)$ we have
\begin{equation}\label{T2}
||{\cal T}_\eps(\phi)||_{L^2(\Omega)}\le ||\phi||_{L^2(\Omega)}.
\end{equation} We also have  (see Proposition 2.5 in \cite{CDG}) for $\phi\in H^1(\Omega)$ (resp.  $\psi\in H^1_0(\Omega)$)
\begin{equation}\label{Def2}
\begin{aligned}
&||{\cal T}_\eps(\phi)-\phi||_{L^2(\widehat{\Omega}_\eps\times Y)}\le C\eps||\nabla\phi||_{L^2(\Omega ; \R^n)}\\
\hbox{( resp. }\quad &||{\cal T}_\eps(\psi)-\psi||_{L^2(\Omega\times Y)}\le C\eps ||\nabla\psi||_{L^2(\Omega ; \R^n)}\quad\hbox{).}
\end{aligned}
\end{equation}
$\bullet$ {\it The  local average operator ${\cal M}_\eps$}

\noindent For $\phi\in L^1(\R^n)$, the function ${\cal M}_\eps(\phi)\in L^\infty(\R^n)$  is defined by
\begin{equation}\label{DefM}
{\cal M}_\eps(\phi)(x)=\int_{Y}\phi\Bigl(\eps\Big[{x\over \eps}\Big]+\eps y\Bigr)dy\qquad \hbox{for a.e. } x\in \R^n.
\end{equation} The value of ${\cal M}_\eps(\phi)$ in the cell $\eps(\xi+Y)$ ($\xi\in \Z^n$) will be denoted ${\cal M}_\eps(\phi)(\eps\xi)$. In \cite{CDG} we proved the following results:

\noindent For $\phi\in L^2(\Omega)$  we have
\begin{equation}\label{M2}
||{\cal M}_\eps(\phi)||_{L^2(\Omega)}\le C ||\phi||_{L^2(\Omega)},\qquad 
 ||{\cal M}_\eps(\phi)- \phi||_{H^{-1}(\Omega)}\le C\eps||\phi||_{L^2(\Omega)}
\end{equation}
and for $\psi\in H^1_0(\Omega)$ (resp. $\phi\in H^1(\Omega)$)  we have
\begin{equation}\label{M1}
\begin{aligned}
 ||{\cal M}_\eps(\psi)- \psi||_{L^2(\Omega)}&\le C \eps||\nabla \psi||_{L^2(\Omega;\R^n)}\\
\hbox{(resp. }\quad  ||{\cal M}_\eps(\phi)- \phi||_{L^2(\widehat{\Omega}_\eps)}&\le C \eps||\nabla \phi||_{L^2(\Omega;\R^n)}\hbox{ ).}
\end{aligned}\end{equation} 
\vskip 1mm

\noindent $\bullet$ {\it  The scale-splitting operator ${\cal Q}_\eps$. }

\noindent $\star$  For $\phi\in L^1(\R^n)$, the function ${\cal Q}_\eps(\phi)\in W^{1,\infty}(\R^n)$ is given by
 \begin{equation*}
{\cal Q}_\eps(\phi)(x)=\sum_{\xi\in \Z^n}{\cal M}_\eps(\phi)(\eps\xi)H_{\eps,\xi}(x)\qquad \hbox{for a.e. }x\in\R^n, \\ 
\end{equation*}
where 
\begin{equation*}
\begin{aligned}
& H_{\eps,\xi}(x)=H\Big({x-\eps\xi\over \eps}\Big)\qquad \hbox{with}\\
& H(z)=\left\{
\begin{aligned}
\big(1-|z_1|\big)\big(1-|z_2|\big)\ldots\big(1-|z_n|\big)\quad \hbox{if}\quad z\in [-1,1]^n,\\
0\hskip 3cm\hbox{if }\quad z\in \R^n\setminus [-1,1]^n.
\end{aligned}\right.
\end{aligned}\end{equation*} Below, we remind some results about ${\cal Q}_\eps$  proved in \cite{CDG} and \cite{GG2}.
\vskip 1mm
\noindent $\star$ For $\phi\in L^2(\R^n)$ we have
\begin{equation}\label{QL2}
||{\cal Q}_\eps(\phi)||_{L^2(\R^n)}\le C||\phi||_{L^2(\R^n)},\qquad ||\nabla {\cal Q}_\eps(\phi)||_{L^2(\R^n;\R^n)}\le {C\over \eps}||\phi||_{L^2(\R^n)}
\end{equation} and
 \begin{equation*}
 {\cal Q}_\eps(\phi)\longrightarrow  \phi\qquad\hbox{strongly in }\quad  L^2(\R^n). 
\end{equation*} 
\noindent $\star$ For $\phi\in H^1(\R^n)$  we have
\begin{equation}\label{82}
\begin{aligned}
||\nabla {\cal Q}_\eps(\phi)||_{L^2(\R^n;\R^n)}& \le C||\nabla \phi||_{L^2(\R^n;\R^n)},\\
||\phi-{\cal Q}_\eps(\phi)||_{L^2(\R^n)} & \le C\eps||\nabla\phi||_{L^2(\R^n;\R^n)}
\end{aligned}\end{equation} and 
 \begin{equation}\label{83}
 {\cal Q}_\eps(\phi)\longrightarrow  \phi\qquad\hbox{strongly in }\quad  H^1(\R^n). 
\end{equation}
\noindent $\star$  For $\phi\in L^2(\R^n)$  and $\chi\in L^2(Y)$ we have $\ds {\cal Q}_\eps(\phi)\chi\Bigl(\Bigl\{{\cdot\over\eps}\Bigr\}\Bigr)\in L^2(\R^n)$, $\ds \nabla{\cal Q}_\eps(\phi)\chi\Bigl(\Bigl\{{\cdot\over\eps}\Bigr\}\Bigr)\in L^2(\R^n)$ and
\begin{equation}\label{820}
\begin{aligned}
\big\|{\cal Q}_\eps(\phi)\chi\Bigl(\Bigl\{{\cdot\over\eps}\Bigr\}\Bigr)\big\|_{L^2(\R^n)}
&\leq C\|\phi\|_{L^2(\R^n)}\|\chi\|_{L^2(Y)},\\
 \big\|{\cal Q}_\eps(\phi)\chi\Bigl(\Bigl\{{\cdot\over\eps}\Bigr\}\Bigr)\big\|_{L^2(\widetilde{\Omega}_{\sqrt n \eps})}
&\leq C\|\phi\|_{L^2(\widetilde{\widetilde{\Omega}}_{3\sqrt n \eps})}\|\chi\|_{L^2(Y)}.\\
\end{aligned}\end{equation} Moreover, if $\phi\in H^1(\R^n)$ then we have
\begin{equation}\label{8200}
\begin{aligned}
\big\|\big({\cal Q}_\eps(\phi)-{\cal M}_\eps(\phi)\big)\chi\Bigl(\Bigl\{{\cdot\over\eps}\Bigr\}\Bigr)\big\|_{L^2(\R^n)}
&\leq C\eps\|\nabla\phi\|_{L^2(\R^n;\R^n)}\|\chi\|_{L^2(Y)},\\
\big\|\nabla{\cal Q}_\eps(\phi)\chi\Bigl(\Bigl\{{\cdot\over\eps}\Bigr\}\Bigr)\big\|_{L^2(\R^n;\R^n)}
&\leq C\|\nabla\phi\|_{L^2(\R^n;\R^n)}\|\chi\|_{L^2(Y)},\\
 \big\|\nabla{\cal Q}_\eps(\phi)\chi\Bigl(\Bigl\{{\cdot\over\eps}\Bigr\}\Bigr)\big\|_{L^2(\widetilde{\Omega}_{\sqrt n \eps};\R^n)}
&\leq C\|\nabla\phi\|_{L^2(\widetilde{\widetilde{\Omega}}_{3\sqrt n \eps};\R^n)}\|\chi\|_{L^2(Y)},
\end{aligned}\end{equation} 
\subsubsection{Some complements}
In this subsection, we extend some results given above to functions belonging to $H^1_\rho(\Omega)$. These technical complements intervene in the proofs of the projection theorems and in the Theorem \ref{TH1}.

\begin{lemma}\label{lemNew} For $\phi\in H^1_\rho(\Omega)$ we have
\begin{equation}\label{M21}
\begin{aligned}
&||\rho\big({\cal M}_\eps(\phi)-\phi\big)||_{L^2(\Omega)}\le C\eps||\phi||_\rho,\\
\forall i\in \{1,\ldots,n\},\qquad &||\rho\big(\phi(\cdot+\eps\Ge_i)-\phi\big)||_{L^2(\Omega)}\le C\eps||\phi||_\rho,\\
 &||\rho\big({\cal M}_\eps(\phi)(\cdot+\eps\Ge_i)-{\cal M}_\eps(\phi)\big)||_{L^2(\Omega)}\le C\eps||\phi||_\rho.
\end{aligned}\end{equation}
For $\phi\in L^2_{1/\rho}(\Omega)$ we have
\begin{equation}\label{M20}
 ||{\cal M}_\eps(\phi)-\phi||_{(H^1_\rho(\Omega))^{'}}\le C\eps||\phi/\rho||_{L^2(\Omega)}.
\end{equation}
The constants do not depend on $\eps$.
\end{lemma}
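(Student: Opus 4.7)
Since $\rho$ is 1-Lipschitz, its oscillation on any cell $\eps(\xi+Y)$ is at most $\sqrt n\,\eps$, which yields a dichotomy that compensates for the missing global $L^2$-control of $\nabla\phi$: an \emph{interior} cell (with $\inf\rho\ge\sqrt n\,\eps$) satisfies $\sup\rho\le 2\inf\rho$, so $\rho$ acts as a constant weight there up to a factor $2$; a \emph{near-boundary} cell has $\rho\le 2\sqrt n\,\eps$ pointwise, so the weight $\rho^2$ itself supplies an $\eps^2$ factor. For the first estimate of \eqref{M21}, I split $\Omega=\widehat\Omega_\eps\cup\Lambda_\eps$. Poincar\'e-Wirtinger on each cell $\eps(\xi+Y)\subset\widehat\Omega_\eps$ gives $\|\phi-{\cal M}_\eps(\phi)(\eps\xi)\|^2_{L^2(\eps(\xi+Y))}\le C\eps^2\|\nabla\phi\|^2_{L^2(\eps(\xi+Y);\R^n)}$; interior cells contribute $C\eps^2\|\rho\nabla\phi\|^2_{L^2}$ after pulling $\rho$ out of the integral, while near-boundary cells contribute $C\eps^2\|\phi\|^2_{L^2}$ since $\|{\cal M}_\eps(\phi)\|_{L^2(\eps(\xi+Y))}\le\|\phi\|_{L^2(\eps(\xi+Y))}$ by Jensen. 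The piece on $\Lambda_\eps\subset\widetilde\Omega_{\sqrt n\eps}$ is absorbed via $\rho\le\sqrt n\,\eps$, \eqref{M2}, and Lemma~\ref{lemP}.

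For the second estimate, when $\rho(x)\ge 2\eps$ the segment $\{x+t\Ge_i:t\in[0,\eps]\}$ lies in $\{\rho\ge\eps\}\subset\Omega$, where $\phi$ is locally $H^1$, so $\phi(x+\eps\Ge_i)-\phi(x)=\int_0^\eps\partial_i\phi(x+t\Ge_i)\,dt$; Cauchy-Schwarz in $t$ together with $\rho(x)\le 2\rho(x+t\Ge_i)$ and Fubini yield $C\eps^2\|\rho\nabla\phi\|^2_{L^2}$. On $\{\rho<2\eps\}$ the factor $\rho^2\le 4\eps^2$ combined with the $L^2$-extension of Lemma~\ref{lemP} bounds the translated difference by $C\|\phi\|_{L^2(\Omega)}$. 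The third estimate follows by triangle inequality
\begin{equation*}
{\cal M}_\eps(\phi)(\cdot+\eps\Ge_i)-{\cal M}_\eps(\phi)=\bigl[{\cal M}_\eps(\phi)(\cdot+\eps\Ge_i)-\phi(\cdot+\eps\Ge_i)\bigr]+\bigl[\phi(\cdot+\eps\Ge_i)-\phi\bigr]+\bigl[\phi-{\cal M}_\eps(\phi)\bigr],
\end{equation*}
the translated first bracket being converted to the first estimate via the change of variable $y=x+\eps\Ge_i$ and $\rho(x)^2\le 2\rho(y)^2+2\eps^2$.

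To obtain \eqref{M20}, I test against $\psi\in H^1_\rho(\Omega)$ and exploit the cellwise self-adjointness ${\cal M}_\eps(\phi)(\eps\xi)\int_{\eps(\xi+Y)}\psi\,dx={\cal M}_\eps(\psi)(\eps\xi)\int_{\eps(\xi+Y)}\phi\,dx$, valid for every $\xi\in\Xi_\eps$, to write
\begin{equation*}
\int_{\widehat\Omega_\eps}\bigl({\cal M}_\eps(\phi)-\phi\bigr)\psi\,dx=\int_{\widehat\Omega_\eps}\frac{\phi}{\rho}\cdot\rho\bigl({\cal M}_\eps(\psi)-\psi\bigr)\,dx\le C\eps\|\phi/\rho\|_{L^2(\Omega)}\|\psi\|_\rho,
\end{equation*}
where the first estimate of \eqref{M21} is applied to $\psi$. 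The residue on $\Lambda_\eps$ is controlled because $|\phi|\le\sqrt n\,\eps\,|\phi/\rho|$ there, and $\|{\cal M}_\eps(\phi)\|_{L^2(\Lambda_\eps)}\le C\eps\|\phi/\rho\|_{L^2(\Omega)}$: the $\Lambda_\eps$-cells lie in $\widetilde{\widetilde\Omega}_{\sqrt n\eps}$, so summing $\eps^n|{\cal M}_\eps(\phi)(\eps\xi)|^2\le\|\phi\|^2_{L^2(\eps(\xi+Y))}$ over them and invoking Lemma~\ref{lemP} together with $\|\phi\|_{L^2(\widetilde\Omega_{c_0\sqrt n\eps})}\le c_0\sqrt n\,\eps\,\|\phi/\rho\|_{L^2(\Omega)}$ gives the claim. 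The main obstacle throughout is this interplay between bulk and boundary: without a global $L^2$ control of $\nabla\phi$, the Poincar\'e-based estimate must be replaced, precisely near $\partial\Omega$, by direct use of the degeneracy of $\rho$, which supplies the missing $\eps$-factor exactly where the gradient bound fails.
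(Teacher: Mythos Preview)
Your proof is correct and follows essentially the same route as the paper. The cell-by-cell dichotomy (interior cells where $\sup\rho\le C\inf\rho$ versus near-boundary cells where $\rho\le C\eps$) for $\eqref{M21}_1$ and the self-adjointness trick $\int_{\widehat{\Omega}_\eps}({\cal M}_\eps(\phi)-\phi)\psi=\int_{\widehat{\Omega}_\eps}({\cal M}_\eps(\psi)-\psi)\phi$ for \eqref{M20} are exactly the paper's argument. The only tactical differences are that for $\eqref{M21}_2$ you use the integral representation $\phi(x+\eps\Ge_i)-\phi(x)=\int_0^\eps\partial_i\phi(x+t\Ge_i)\,dt$ on $\{\rho\ge 2\eps\}$ and for $\eqref{M21}_3$ you reduce by triangle inequality to the previous two estimates, whereas the paper simply repeats the cell-by-cell dichotomy in both cases; your variants work and are arguably more transparent, though the change of variable for the translated bracket in $\eqref{M21}_3$ quietly requires controlling $\int_{(\Omega+\eps\Ge_i)\setminus\Omega}\rho^2|{\cal M}_\eps(\phi)-\phi|^2$, which is harmless since this set lies in the $\eps$-annulus where $\rho\le\eps$.
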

\begin{proof} \noindent  {\it Step 1.}  We prove $\eqref{M21}_1$. 

\noindent Let $\phi$ be in  $ H^1_\rho(\Omega)$ and let $\eps(\xi+Y)$ be a cell included in $\Omega$.

\noindent{\it Case 1:}  $\rho(\eps\xi)\ge 2\sqrt n\eps$. 

\noindent In this case, observing that
$$1\le {\max_{z\in \eps(\xi+Y)}\{\rho(z)\}\over \min_{z\in \eps(\xi+Y)}\{\rho(z)\}}\le 3$$ 
and thanks to the Poincar-Wirtinger's inequality we obtain
$$\begin{aligned}
\int_{\eps(\xi+Y)}[\rho(x)]^2|{\cal M}_\eps(\phi)(\eps\xi)-\phi(x)|^2 dx & \le [\max_{z\in \eps(\xi+Y)}\{\rho(z)\}]^2\int_{\eps(\xi+Y)}|{\cal M}_\eps(\phi)(\eps\xi)-\phi(x)|^2 dx\\
&\le [\max_{z\in \eps(\xi+Y)}\{\rho(z)\}]^2C\eps^2\int_{\eps(\xi+Y)}|\nabla\phi(x)|^2dx\\
&\le C\eps^2\int_{\eps(\xi+Y)}[\rho(x)]^2|\nabla\phi(x)|^2dx.
\end{aligned}$$ 
\noindent{\it Case 2:} $\rho(\eps\xi)\le 2\sqrt n\eps$. 

\noindent In this case we have  
$$\int_{\eps(\xi+Y)}[\rho(x)]^2|{\cal M}_\eps(\phi)(\eps\xi)-\phi(x)|^2 dx \le C\eps^2\int_{\eps(\xi+Y)}|\phi(x)|^2 dx.$$ The cases 1 and 2 lead to
\begin{equation}\label{NlemNew}
\int_{\widehat{\Omega}_\eps}[\rho(x)]^2|{\cal M}_\eps(\phi)(x)-\phi(x)|^2 dx\le C\eps^2\int_{\widehat{\Omega}_\eps}\big([\rho(x)]^2|\nabla\phi(x)|^2+|\phi(x)|^2\big)dx.
\end{equation}
Since $\Lambda_\eps\subset \widetilde{\Omega}_{\sqrt n\eps}$ and due to Lemma \ref{lemP} we get
$$\int_{\Lambda_\eps}[\rho(x)]^2|{\cal M}_\eps(\phi)(x)-\phi(x)|^2 dx \le C\eps^2\int_{\widetilde{\Omega}_{c_0\sqrt n\eps}}|\phi(x)|^2 dx$$ 
 which in turn with  \eqref{NlemNew} gives $\eqref{M21}_1$. Proceeding in the same way we obtain $\eqref{M21}_2$ and $\eqref{M21}_3$. 

\noindent {\it Step 2. } We prove \eqref{M20}.

\noindent Let $\phi$ be in $L^2_{1/\rho}(\Omega)$ and $\psi\in H^1_\rho(\Omega)$. We have
$$\int_{\widehat{\Omega}_\eps}\big({\cal M}_\eps(\phi)-\phi\big)\psi=\int_{\widehat{\Omega}_\eps}\big({\cal M}_\eps(\psi)-\psi\big)\phi.$$ Consequently we obtain
$$\begin{aligned}
\Big|\int_\Omega\big({\cal M}_\eps(\phi)-\phi\big)\psi -\int_{\Omega}\big({\cal M}_\eps(\psi)-\psi\big)\phi\Big|&\le \int_{\Lambda_\eps}\big|\big({\cal M}_\eps(\phi)-\phi\big)\psi\big|+ \int_{\Lambda_\eps}\big|\big({\cal M}_\eps(\psi)-\psi\big)\phi\big|\\
&\le C\big(||\phi||_{L^2(\Lambda_\eps)}+||{\cal M}_\eps(\phi)||_{L^2(\Lambda_\eps)}\big)||\psi||_{L^2(\Omega)}.
\end{aligned}$$ The inclusion $\Lambda_\eps\subset \widetilde{\Omega}_{\sqrt n \eps}$,  the fact that $\phi\in L^2_{1/\rho}(\Omega)$ and the estimates $\eqref{lemPEst}_1$-$\eqref{M21}_1$ lead to
$$\int_\Omega\big({\cal M}_\eps(\phi)-\phi\big)\psi\le C\eps ||\phi/\rho||_{L^2(\Omega)}||\psi||_\rho.$$ Hence \eqref{M20} is proved.
\end{proof} 

\vskip 1mm
\begin{lemma}\label{lem26}  For $\phi\in H^1_\rho(\Omega)$ we have
\begin{equation}\label{RAP}
||\rho\big({\cal Q}_\eps(\phi)-\phi\big)||_{L^2(\Omega)}\le C\eps||\phi||_\rho
\end{equation}
For $\phi\in H^1_{1/\rho}(\Omega)$ and $\phi_\eps$ given by Lemma \ref{lem83} we have
\begin{equation}\label{M22}
\begin{aligned}
&||{\cal Q}_\eps(\phi_\eps)||_{1/\rho}\le C||\phi||_{1/\rho},\qquad 
\big\|\big({\phi-{\cal Q}_\eps(\phi_\eps)\big)/\rho}\big\|_{L^2(\Omega)}\le C\eps||\phi||_{1/\rho},\\
&\forall\Gi =i_1\Ge_1+\ldots+i_n\Ge_n,\qquad (i_1,\ldots, i_n)\in\{0,1\}^{n}\\
&\big\|\big({\cal M}_\eps(\phi_\eps)(\cdot+\eps\Gi)-{\cal M}_\eps(\phi_\eps)\big) /\rho\big\|_{L^2(\Omega)}\le C\eps||\phi||_{1/\rho}.
\end{aligned}\end{equation} For $\phi\in L^2(\R^n)$  and $\chi\in L^2(Y)$
\begin{equation}\label{820RR}
\big\|\big({\cal M}_\eps(\rho\phi)-\rho{\cal M}_\eps(\phi)\big)\chi\Bigl(\Bigl\{{\cdot\over\eps}\Bigr\}\Bigr)\big\|_{L^2(\R^n)}
\leq C\eps\|\phi\|_{L^2(\R^n)}\|\chi\|_{L^2(Y)}.
\end{equation} For $\phi\in H^1_\rho(\Omega)$ and $\chi\in L^2(Y)$
\begin{equation}\label{820RR1}
\begin{aligned}
&\big\|\rho\big({\cal Q}_\eps(\phi)-{\cal M}_\eps(\phi)\big)\chi\Bigl(\Bigl\{{\cdot\over\eps}\Bigr\}\Bigr)\big\|_{L^2(\Omega)}
\leq C\eps\|\phi\|_{\rho}\|\chi\|_{L^2(Y)},\\
&\big\|\rho\nabla{\cal Q}_\eps(\phi)\chi\Bigl(\Bigl\{{\cdot\over\eps}\Bigr\}\Bigr)\big\|_{L^2(\Omega)}
\leq C\|\phi\|_{\rho}\|\chi\|_{L^2(Y)}.
\end{aligned}\end{equation}
The constants do not depend on $\eps$.
\end{lemma}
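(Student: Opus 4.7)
The six inequalities share two structural ingredients, so my plan is to isolate these once and then dispatch each estimate. First, on the cell $\eps(\xi+Y)$, because $\sum_{\Gi\in\{0,1\}^n}H_{\eps,\xi+\Gi}(x)=1$,
$${\cal Q}_\eps(\phi)(x)-{\cal M}_\eps(\phi)(x)=\sum_{\Gi\in\{0,1\}^n\setminus\{0\}}\bigl({\cal M}_\eps(\phi)(x+\eps\Gi)-{\cal M}_\eps(\phi)(x)\bigr)H_{\eps,\xi+\Gi}(x),$$
and the gradient version is the same with $\nabla H_{\eps,\xi+\Gi}$ in place of $H_{\eps,\xi+\Gi}$; here $||H_{\eps,\xi+\Gi}||_\infty\le 1$ and $\eps||\nabla H_{\eps,\xi+\Gi}||_\infty\le C$. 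Second, $\rho$ is $1$-Lipschitz, so $|\rho(x)-\rho(z)|\le\sqrt n\,\eps$ whenever $x,z$ lie in the same cell or $z=x+\eps\Gi$ with $\Gi\in\{0,1\}^n$. The three weight-$\rho$ estimates $\eqref{RAP}$, $\eqref{820RR}$, $\eqref{820RR1}$ follow by combining these two tools with $\eqref{M21}_1$--$\eqref{M21}_3$; the three weight-$1/\rho$ estimates in $\eqref{M22}$ additionally use that $\phi_\eps$ from Lemma~\ref{lem83} is supported in $\{\rho\ge 6\sqrt n\,\eps\}$, so $\rho$ stays comparable along all the cells and $\eps$-translations that arise.

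For $\eqref{RAP}$, split ${\cal Q}_\eps(\phi)-\phi=({\cal Q}_\eps(\phi)-{\cal M}_\eps(\phi))+({\cal M}_\eps(\phi)-\phi)$, bound the second summand weighted by $\rho$ via $\eqref{M21}_1$, and bound the first via the cell identity together with $\eqref{M21}_3$ (extended from $\Gi=\Ge_i$ to $\Gi\in\{0,1\}^n$ by telescoping $n$ single-coordinate shifts). For $\eqref{820RR}$, the pointwise identity
$$\bigl({\cal M}_\eps(\rho\phi)-\rho\,{\cal M}_\eps(\phi)\bigr)(x)=\int_Y\bigl(\rho(\eps[x/\eps]+\eps y)-\rho(x)\bigr)\phi(\eps[x/\eps]+\eps y)\,dy$$
is majorized by $\sqrt n\,\eps\,{\cal M}_\eps(|\phi|)(x)$; multiplication by $\chi(\{\cdot/\eps\})$, the identity $\int_{\eps(\xi+Y)}\chi(\{\cdot/\eps\})^2=\eps^n||\chi||_{L^2(Y)}^2$, and Jensen $\bigl({\cal M}_\eps(|\phi|)(\eps\xi)\bigr)^2\le\eps^{-n}\int_{\eps(\xi+Y)}|\phi|^2$ give the bound after summing in $\xi$. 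For $\eqref{820RR1}$, the cell identity reduces both lines to controlling $||\rho\bigl({\cal M}_\eps(\phi)(\cdot+\eps\Gi)-{\cal M}_\eps(\phi)\bigr)\chi(\{\cdot/\eps\})||_{L^2(\Omega)}$: the discrete increment is cell-constant, so $\chi(\{\cdot/\eps\})^2$ integrates per cell to $\eps^n||\chi||_{L^2(Y)}^2$ and $\eqref{M21}_3$ closes the first line; the second line has the same shape with $\nabla H$ in place of $H$, and the extra $\eps^{-1}$ is absorbed by the $\eps$ from $\eqref{M21}_3$, giving the claimed $O(1)$ bound.

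The heart of $\eqref{M22}$ is the third line. Writing $u(z)=\phi_\eps(z+\eps\Gi)-\phi_\eps(z)=\eps\int_0^1\Gi\cdot\nabla\phi_\eps(z+s\eps\Gi)\,ds$ and ${\cal M}_\eps(\phi_\eps)(\cdot+\eps\Gi)-{\cal M}_\eps(\phi_\eps)={\cal M}_\eps(u)$, the support of $\phi_\eps$ forces the integrand to vanish unless $\rho(z+s\eps\Gi)\ge 6\sqrt n\,\eps$, in which case $\rho(z)$ and $\rho(z+s\eps\Gi)$ stay within a fixed constant ratio; $L^2$-translation invariance and $\eqref{E200}$ then yield $||u/\rho||_{L^2(\R^n)}\le C\eps||\phi||_{1/\rho}$. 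The same support argument shows $1/\rho(x)\le C/\rho(\eps\xi+\eps y)$ wherever the integrand of ${\cal M}_\eps(u)(x)=\int_Y u(\eps\xi+\eps y)\,dy$ is nonzero (with $\xi=[x/\eps]$), and Jensen on ${\cal M}_\eps$ combined with Fubini in $y$ deliver $||{\cal M}_\eps(u)/\rho||_{L^2}\le C||u/\rho||_{L^2}$; together these two bounds give $\eqref{M22}_3$. With that in hand, $\eqref{M22}_1$ follows from the gradient cell identity---the $\eps^{-1}$ from $\nabla H$ cancels the $\eps$ in $\eqref{M22}_3$---and $\eqref{M22}_2$ follows from the splitting $\phi-{\cal Q}_\eps(\phi_\eps)=(\phi-\phi_\eps)+(\phi_\eps-{\cal M}_\eps(\phi_\eps))+({\cal M}_\eps(\phi_\eps)-{\cal Q}_\eps(\phi_\eps))$, whose three summands are controlled by $\eqref{E200}$, by a cell-wise Poincar\'e--Wirtinger inequality using the support argument to pull $\rho$ out of the cell integral, and by $\eqref{M22}_3$ respectively. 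The principal difficulty is $\eqref{M22}_3$: because $1/\rho$ is singular at $\partial\Omega$, the argument closes only because the cutoff in Lemma~\ref{lem83} restricts $\phi_\eps$ to $\{\rho\ge 6\sqrt n\,\eps\}$, keeping every $\rho$-quotient that appears bounded.
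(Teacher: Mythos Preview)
Your proof is correct and follows essentially the same strategy as the paper: both arguments rest on the cell identity for ${\cal Q}_\eps-{\cal M}_\eps$, the $1$-Lipschitz bound on $\rho$ to transfer weights across a cell, and---for the $1/\rho$ estimates---the support condition $\phi_\eps=0$ on $\{\rho<6\sqrt n\,\eps\}$ from Lemma~\ref{lem83}, which keeps $\rho$ comparable on every relevant cell and translate. The only organizational difference is that you prove $\eqref{M22}_3$ first (via the fundamental-theorem-of-calculus representation of the discrete increment) and then deduce $\eqref{M22}_1$--$\eqref{M22}_2$ from it through the cell identity, whereas the paper establishes $\eqref{M22}_1$ and $\eqref{M22}_2$ directly by a cell-by-cell computation bounding $\int_{\eps(\xi+Y)}\rho^{-2}|\nabla{\cal Q}_\eps(\phi_\eps)|^2$ and $\int_{\eps(\xi+Y)}\rho^{-2}|{\cal Q}_\eps(\phi_\eps)-\phi_\eps|^2$ in terms of $\int_{\eps(\xi+2Y)}\rho^{-2}|\nabla\phi_\eps|^2$, and then says $\eqref{M22}_3$, \eqref{820RR}, \eqref{820RR1} follow by the same method.
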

\begin{proof}  {\it Step 1.} Let $\phi$ be in $H^1_\rho(\Omega)$.  We  first prove 
\begin{equation}\label{RAP2}
||\rho\big({\cal Q}_\eps(\phi)-{\cal M}_\eps(\phi)\big)||_{L^2(\Omega)}\le C\eps||\phi||_\rho.
\end{equation} To do that, we proceed as in the proof of  $\eqref{M21}_1$. Let $\eps(\xi+Y)$ be a cell included in $\Omega$.
\vskip 1mm
\noindent{\it Case 1:}  $\rho(\eps\xi)\ge 3\sqrt n\eps$. 

\noindent In this case we have
$$1\le {\max_{z\in \eps(\xi+Y)}\{\rho(z)\}\over \min_{z\in \eps(\xi+2Y)}\{\rho(z)\}}\le 4\qquad\hbox{and}\qquad  1\le {\max_{z\in \eps(\xi+2Y)}\{\rho(z)\}\over \min_{z\in \eps(\xi+Y)}\{\rho(z)\}}\le {5\over 2}.$$ 
By definition of ${\cal Q}_\eps(\phi)$ we deduce that
$$\begin{aligned}
\int_{\eps(\xi+Y)}[\rho(x)]^2|{\cal Q}_\eps(\phi)(x)-{\cal M}_\eps(\phi)(\eps\xi)|^2 dx & \le [\max_{z\in \eps(\xi+Y)}\{\rho(z)\}]^2\int_{\eps(\xi+Y)}|{\cal Q}_\eps(\phi)(x)-{\cal M}_\eps(\phi)(\eps\xi)|^2 dx\\
&\le [\max_{z\in \eps(\xi+Y)}\{\rho(z)\}]^2 C \eps^2\int_{\eps(\xi+2Y)}|\nabla\phi(x)|^2dx\\
&\le C\eps^2\int_{\eps(\xi+2Y)}[\rho(x)]^2|\nabla\phi(x)|^2dx.
\end{aligned}$$
\noindent{\it Case 2:}  $\rho(\eps\xi)\le 3\sqrt n\eps$. Then again by definition of ${\cal Q}_\eps(\phi)$ we get
$$\int_{\eps(\xi+Y)}[\rho(x)]^2|{\cal Q}_\eps(\phi)(x)-{\cal M}_\eps(\phi)(\eps\xi)|^2 dx \le C\eps^2\int_{\eps(\xi+2Y)}|\phi(x)|^2 dx.$$ As a consequence of both cases  we get
\begin{equation}\label{NN}
\int_{\widehat{\Omega}_\eps}[\rho(x)]^2|{\cal Q}_\eps(\phi)(x)-{\cal M}_\eps(\phi)(x)|^2 dx \le C\eps^2\int_{\Omega}\big([\rho(x)]^2|\nabla\phi(x)|^2+|\phi(x)|^2\big)dx.
\end{equation}
Furthermore we have
$$\int_{\Lambda_\eps}[\rho(x)]^2|{\cal Q}_\eps(\phi)(x)|^2 dx \le C\eps^2\int_{\Lambda_\eps}|{\cal Q}_\eps(\phi)(x)|^2 dx \le C\eps^2\int_{\Omega}|\phi(x)|^2 dx$$ which with \eqref{NN} lead to  \eqref{RAP2}.
Then as a consequence of $\eqref{M21}_1$ and \eqref{RAP2} we get \eqref{RAP}.
\vskip 1mm 
 \noindent  {\it Step 2.}  We  prove $\eqref{M22}_1$. 
 
 \noindent Let  $\phi$ be in $ H^1_{1/\rho}(\Omega)$ and $\phi_\eps$ given by Lemma \ref{lem83}. Due to the fact that  $\phi_\eps(x)=0$ for a.e. $x\in \R^n\setminus \overline{\widetilde{\Omega}}_{6\sqrt n \eps}$, hence  ${\cal Q}_\eps(\phi_\eps)(x)=0$ for every $x\in \Omega$ such that $\rho(x)\le4\sqrt n\eps$. Again we take a cell $\eps(\xi+Y)$ included in $\Omega$ such that $\rho(\eps\xi)\ge 3\sqrt n\eps$. The values taken by ${\cal Q}_\eps(\phi_\eps)$ in the cell $\eps(\xi+Y)$ depend only on the values of $\phi_\eps$ in $\eps(\xi+2Y)$. Then we have
$$\begin{aligned}
& \int_{\eps(\xi+Y)}{1\over [\rho(x)]^2}|\nabla {\cal Q}_\eps(\phi_\eps)(x)|^2dx \le {C\over [\min_{x\in \eps(\xi+Y)}\{\rho(x)\}]^2}\int_{\eps(\xi+2Y)}|\nabla\phi_\eps(x)|^2 dx\\
&\le C{[\max_{x\in \eps(\xi+2Y)}\{\rho(x)\}]^2\over [\min_{x\in \eps(\xi+Y)}\{\rho(x)\}]^2}\int_{\eps(\xi+2Y)}{1\over [\rho(x)]^2}|\nabla\phi_\eps(x)|^2dx\le  C \int_{\eps(\xi+2Y)}{1\over [\rho(x)]^2}|\nabla\phi_\eps(x)|^2dx.
\end{aligned}$$ Adding all these inequalities gives
$$ \int_{\widetilde{\Omega}_{4\sqrt n \eps}}{1\over [\rho(x)]^2}|\nabla {\cal Q}_\eps(\phi_\eps)(x)|^2dx \le  C\int_{\Omega}{1\over [\rho(x)]^2}|\nabla\phi_\eps(x)|^2dx $$
Since  ${\cal Q}_\eps(\phi_\eps)(x)=0$ for every  $x\in \Omega$ such that $\rho(x)\le4\sqrt n\eps$, we get $||{\cal Q}_\eps(\phi_\eps)||_{1/\rho}\le C||\phi_\eps||_{1/\rho}$. We conclude using $\eqref{E200}_2$.
\vskip 1mm
 \noindent  {\it Step 3.}  Now we prove $\eqref{M22}_2$. Again we consider a cell $\eps(\xi+Y)$ included in $\Omega$ such that $\rho(\eps\xi)\ge 3\sqrt n\eps$. We have
$$\begin{aligned}
& \int_{\eps(\xi+Y)}{1\over [\rho(x)]^2}|{\cal Q}_\eps(\phi_\eps)(x)-\phi_\eps(x)|^2 dx\le {C\over [\min_{x\in \eps(\xi+Y)}\{\rho(x)\}]^2}\int_{\eps(\xi+Y)}|{\cal Q}_\eps(\phi_\eps)(x)-\phi_\eps(x)|^2 dx\\
&\le {C\over [\min_{x\in \eps(\xi+Y)}\{\rho(x)\}]^2}\sum_{\Gi\in\{0,1\}^n}\int_{\eps(\xi+\Gi+Y)}|{\cal M}_\eps(\phi_\eps)(\eps\xi+\eps\Gi)-\phi_\eps(x)|^2 dx\\
&\le C\eps^2{[\max_{z\in \eps(\xi+2Y)}\{\rho(z)\}]^2\over [\min_{z\in \eps(\xi+Y)}\{\rho(z)\}]^2}\int_{\eps(\xi+2Y)}{1\over [\rho(x)]^2}|\nabla\phi_\eps(x)|^2dx\le  C \eps^2\int_{\eps(\xi+2Y)}{1\over [\rho(x)]^2}|\nabla\phi_\eps(x)|^2dx.
\end{aligned}$$ Hence we get
$$ \int_{\widetilde{\Omega}_{4\sqrt n \eps}}{1\over [\rho(x)]^2}|{\cal Q}_\eps(\phi_\eps)(x)-\phi_\eps(x)|^2dx \le  C\eps^2\int_{\Omega}{1\over [\rho(x)]^2}|\nabla\phi_\eps(x)|^2dx $$ The above estimate and the fact that  ${\cal Q}_\eps(\phi_\eps)(x)-\phi_\eps(x)=0$ for a.e.  $x\in \Omega$ such that $\rho(x)\le4\sqrt n\eps$ yield $||(\phi_\eps-{\cal Q}_\eps(\phi_\eps))/\rho||_{L^2(\Omega)}\le C\eps||\phi_\eps||_{1/\rho}$. We conclude using  both estimates in \eqref{E200}.

\noindent Proceeding as in the Steps 2 and 3 we obtain $\eqref{M22}_3$, \eqref{820RR} and \eqref{820RR1}.
\end{proof}

\section{Two new projection theorems}\label{TNPT}
\begin{theorem}\label{Theorem 2.2}
Let $\phi$ be in $H^1_{1/\rho}(\Omega)$. There exists  $\widehat{\phi}_\eps\in H^1_{per}(Y ; L^2(\Omega))$ such that
\begin{equation}\label{2.3}
\left\{\begin{aligned}  
&||\widehat{\phi}_\eps||_{H^1(Y ; L^2(\Omega))}\le
C\bigl\{||\phi||_{L^2(\Omega)}+\eps||\nabla \phi||_{[L^2(\Omega)]^n}\bigr\}\\   
&||{\cal T}_\eps(\phi)-\widehat{\phi}_\eps||_{H^1(Y ; (H^1_\rho(\Omega ))^{'})}\le  C\eps\big(||\phi/\rho||_{L^2(\Omega)}+\eps||\phi||_{1/\rho}\big).
\end{aligned}\right.
\end{equation}
\noindent  The constants  depend only on $n$ and  $\partial\Omega$.
\end{theorem}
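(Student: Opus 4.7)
My plan is to take $\widehat{\phi}_\eps(x, y) := \mathcal{M}_\eps(\phi)(x)$, which is constant in $y$ and therefore lies trivially in $H^1_{per}(Y; L^2(\Omega))$ with vanishing $y$-gradient. The first bound in \eqref{2.3} then reduces to the continuity estimate $\|\mathcal{M}_\eps(\phi)\|_{L^2(\Omega)}\le C\|\phi\|_{L^2(\Omega)}$ from \eqref{M2}; the slack $\eps\|\nabla\phi\|_{L^2}$ in the statement is not needed for this choice.

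The core of the proof is the second bound in \eqref{2.3}. Since $\nabla_y\widehat{\phi}_\eps=0$ and $\nabla_y\mathcal{T}_\eps(\phi)=\eps\,\mathcal{T}_\eps(\nabla\phi)$, I would split the squared $H^1(Y;(H^1_\rho)')$ norm as
\[
\|\mathcal{T}_\eps(\phi)-\mathcal{M}_\eps(\phi)\|_{L^2(Y;(H^1_\rho)')}^2+\eps^2\|\mathcal{T}_\eps(\nabla\phi)\|_{L^2(Y;(H^1_\rho)')}^2.
\]
Both terms I would handle by duality against a test $v\in L^2(Y;H^1_\rho(\Omega))$, decomposing $v(x,y)=\bar v(x)+(v(x,y)-\bar v(x))$ with $\bar v(x):=\int_Y v(x,y)\,dy$, and further splitting $\bar v=\mathcal{M}_\eps(\bar v)+(\bar v-\mathcal{M}_\eps(\bar v))$. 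The mean-zero-in-$y$ identity $\int_Y\mathcal{T}_\eps(\phi)(x,y)\,dy=\mathcal{M}_\eps(\phi)(x)$ on $\widehat{\Omega}_\eps$ kills the $\bar v$ contribution of the zeroth-order term on interior cells. Against the oscillating piece $v-\bar v$, a Poincar\'e--Wirtinger estimate in $y$ on each interior cell reduces the zeroth-order term to the gradient term up to an $O(\eps^2\|\phi\|_{1/\rho})$ remainder. For the gradient term tested against $\mathcal{M}_\eps(\bar v)$, the Riemann-type sum $\sum_{\xi}\eps^n\partial_i\phi(\eps\xi+\eps y)\,\mathcal{M}_\eps(\bar v)(\eps\xi)$ admits a discrete summation by parts (shifting $\xi\to\xi-\Ge_i$) that transfers the derivative onto the cell difference of $\mathcal{M}_\eps(\bar v)$. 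A Cauchy--Schwarz split using the weighted shift estimate in \eqref{M21}, namely $\|\rho(\mathcal{M}_\eps(\bar v)(\cdot+\eps\Ge_i)-\mathcal{M}_\eps(\bar v))\|_{L^2(\Omega)}\le C\eps\|\bar v\|_\rho$, combined with control of $\phi/\rho$, then produces the leading contribution $O(\eps\|\phi/\rho\|_{L^2})$. The remaining weighted estimates in \eqref{M21} and the dual bound \eqref{M20} absorb the residual pieces into the $O(\eps^2\|\phi\|_{1/\rho})$ correction.

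The main obstacle will be the boundary strip $\Lambda_\eps\subset\widetilde{\Omega}_{\sqrt n\eps}$, where $\mathcal{T}_\eps(\phi)=0$ by convention but $\mathcal{M}_\eps(\phi)$ generically does not vanish: the mean-zero-in-$y$ identity fails there and the discrete summation by parts picks up boundary remainders on cells adjacent to $\partial\Omega$. I would control these using Lemma~\ref{lem21} (the Hardy-type bound $\|\phi/\rho^2\|_{L^2}\le C\|\phi\|_{1/\rho}$) together with the thin-strip estimate \eqref{403} of Lemma~\ref{lemA} applied on the $\sqrt n\eps$-strip, which combined bound $\|\mathcal{M}_\eps(\phi)\|_{L^2(\Lambda_\eps)}$ and the boundary-cell sums by $C\eps\|\phi/\rho\|_{L^2}+C\eps^2\|\phi\|_{1/\rho}$. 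Assembling the interior and boundary contributions yields the second bound in \eqref{2.3}.
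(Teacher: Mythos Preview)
Your choice $\widehat{\phi}_\eps=\mathcal{M}_\eps(\phi)$ cannot work: being constant in $y$, it forces the entire $y$-gradient $\nabla_y(\mathcal{T}_\eps(\phi)-\widehat{\phi}_\eps)=\eps\,\mathcal{T}_\eps(\nabla\phi)$ into the remainder, and this term is \emph{not} small in $L^2(Y;(H^1_\rho(\Omega))')$ at the claimed rate. Take $n=1$, $\Omega=(0,1)$ and $\phi(x)=x^2(1-x)^2\sin(2\pi x/\eps)$. Then $\|\phi/\rho\|_{L^2}=O(1)$ and $\|\phi\|_{1/\rho}=O(1/\eps)$, so the right-hand side of $\eqref{2.3}_2$ is $O(\eps)$. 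But $\eps\,\mathcal{T}_\eps(\phi')(x,y)\approx 2\pi\,x^2(1-x)^2\cos(2\pi y)$; testing against the fixed $\psi\equiv 1\in H^1_\rho(\Omega)$ gives a pairing of size $c\,|\cos(2\pi y)|$, hence $\|\eps\,\mathcal{T}_\eps(\phi')\|_{L^2(Y;(H^1_\rho)')}\ge c>0$ uniformly in $\eps$. The second estimate therefore fails for your $\widehat{\phi}_\eps$.

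The step that breaks is your ``discrete summation by parts'': in the sum $\eps^n\sum_\xi \partial_i\phi(\eps\xi+\eps y)\,\mathcal{M}_\eps(\bar v)(\eps\xi)$ the factor $\partial_i\phi(\eps\xi+\eps y)$ is a pointwise value of a derivative, not a discrete $\xi$-difference, so shifting $\xi\to\xi-\Ge_i$ does \emph{not} transfer the derivative onto $\mathcal{M}_\eps(\bar v)$. The paper avoids this by never trying to bound $\mathcal{T}_\eps(\nabla\phi)$ itself. Instead it measures only the \emph{periodic defect} of $\mathcal{T}_\eps(\phi)$: the jump $\mathcal{T}_{\eps,i}(\phi)(\cdot,y+\Ge_i)-\mathcal{T}_{\eps,i}(\phi)(\cdot,y)=\phi(\eps(\xi+\Ge_i)+\eps y)-\phi(\eps\xi+\eps y)$ \emph{is} a genuine $\xi$-shift, and a change of variables $x\mapsto x-\eps\Ge_i$ moves the shift onto the test function $\Psi$, where $\|\rho(\Psi(\cdot-\eps\Ge_i)-\Psi)\|_{L^2}\le C\eps\|\Psi\|_\rho$ from \eqref{M21} applies. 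The correct $\widehat{\phi}_\eps$ is then the $H^1(Y;L^2(\Omega))$-orthogonal projection of $\mathcal{T}_\eps(\phi)$ onto $H^1_{per}(Y;L^2(\Omega))$, which retains the full periodic $y$-oscillation; the anti-periodic remainder $\overline{\phi}_\eps$ is controlled by the trace jumps via Theorem~2.2 of \cite{GG1}. In the example above, $\mathcal{T}_\eps(\phi)(x,y)\approx x^2(1-x)^2\sin(2\pi y)$ is already essentially periodic, so the paper's $\widehat{\phi}_\eps\approx\mathcal{T}_\eps(\phi)$ and $\overline{\phi}_\eps\approx 0$, whereas your $\mathcal{M}_\eps(\phi)\approx 0$ throws all of $\mathcal{T}_\eps(\phi)$ into the remainder.
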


\begin{proof} Here, we proceed as in the proof of  Proposition 3.3 in \cite {GG1}.  We first reintroduce the open sets $\widehat{\Omega}_{\eps,i}$ and the ''double'' unfolding operators ${\cal T}_{\eps, i}$. We set 
$$\widehat{\Omega}_{\eps,i} =\widehat{\Omega}_{\eps}\cap \big(\widehat{\Omega}_{\eps}-\eps\Ge_i\big),\qquad  K_i=\hbox{interior}\big(\overline{Y}\cup (\Ge_i+\overline{Y})\big),\quad i\in\{1,\ldots,n\}.$$ 
\noindent The unfolding  operator ${\cal T}_{\eps, i}$ from $L^2(\Omega)$
into $L^2(\Omega\times K_i)$ is defined  by
$$\forall\psi\in L^2(\Omega),\qquad {\cal T}_{\eps,
i}(\psi)(x,y)=
\left\{
\begin{aligned}
&\psi\Bigl(\eps\Bigl[{x\over
\eps}\Bigr]_Y+ \eps y\Bigr)\qquad \hbox{for $ x\in\widehat{\Omega}_{\eps,i}$ and  for a.e. $ y\in  K_i$},\\
& 0 \hskip 28mm  \hbox{for $ x\in\Omega\setminus\overline{\widehat{\Omega}}_{\eps,i}$ and for  a.e. $ y\in  K_i$}.
\end{aligned}\right.$$
\noindent  The restriction of  ${\cal T}_{ \eps,i}(\psi)$ to $\widehat{\Omega}_{\eps,i}\times Y$ is equal to ${\cal T}_\eps(\psi)$.
%
%
\vskip 1mm
\noindent{\it Step 1.}  Let us first take $\phi\in L^2_{1/\rho}(\Omega)$. We set $\ds \psi={1\over \rho}\phi$ and we evaluate the difference ${\cal
T}_{\eps, i}(\phi)(.,.. +\Ge_i)-{\cal T}_{\eps, i}(\phi)$ in $L^2(Y ; (H^1_\rho(\Omega))^{'})$.
\smallskip
\noindent For any $\Psi\in H^1_\rho(\Omega)$ a change of variables  gives for a. e. $y\in Y$
\begin{equation*}
\begin{aligned}
 \int_\Omega{\cal T}_{\eps, i}(\phi)(x,y+\Ge_i)\Psi(x)dx&=\int_{
\widehat{\Omega}_{\eps,i}}{\cal T}_{\eps}(\phi)(x+\eps\Ge_i,y)\Psi(x)dx\\ 
&= \int_{\widehat{\Omega}_{\eps,i}+\eps\Ge_i}{\cal T}_{\eps}(\phi)(x,y)\Psi(x-\eps\Ge_i)dx.
\end{aligned}\end{equation*} Then we obtain  for a. e. $y\in Y$
\begin{equation*}
\begin{aligned}
  &\Bigl|\int_{\Omega}\bigl\{{\cal T}_{\eps, i}(\phi)(., y+\Ge_i)-{\cal T}_{\eps, i}(\phi)(.,
y)\bigr\}\Psi -\int_{\widehat{\Omega}_{\eps,i}}{\cal T}_{\eps}(\psi)(., y)\rho\bigl\{\Psi(.-\eps\Ge_i)-\Psi\bigr\}
\Bigr|\\ 
\le & \Big|\int_{\widehat{\Omega}_{\eps,i}}{\cal T}_{\eps}(\psi)(., y)\big({\cal T}_\eps(\rho)-\rho\big)\bigl\{\Psi(.-\eps\Ge_i)-\Psi\bigr\}
\Big|+C||{\cal T}_{\eps}(\phi)(.,y)||_{L^2(\widetilde{\Omega}_{2\sqrt n\eps})}||\Psi||_{
L^2(\widetilde{\Omega}_{2\sqrt n\eps})}.
\end{aligned}\end{equation*}  Estimate $\eqref{M21}_2$ leads to
\begin{equation*}
 ||\rho\big(\Psi(.-\eps\Ge_i)-\Psi\big)||_{L^2(\widehat{\Omega}_{\eps,i})}\le C\eps ||\Psi||_\rho\qquad \forall i\in\{1,\ldots,n\}.
\end{equation*}  We have 
\begin{equation}\label{Trho}
||{\cal T}_\eps(\rho)-\rho||_{L^\infty(\Omega)}\le C\eps.
\end{equation} The above inequalities imply
\begin{equation*}
\begin{aligned}
 <{\cal T}_{\eps, i}&(\phi)(., y+\Ge_i)-{\cal T}_{\eps, i}(\phi)(., y)\,,\, \Psi>_{(H^1_\rho(\Omega))^{'},H^1_\rho( \Omega)}\\   
=&\int_{\Omega}\bigl\{{\cal T}_{\eps, i}(\phi)(x, y+\Ge_i)-{\cal T}_{\eps, i}(\phi)(x, y)\bigr\}\Psi(x)dx\\ 
\le  &C\eps ||\Psi||_\rho\|{\cal T}_{\eps}(\psi)(.,y)\|_{L^2(\Omega )}+C\eps ||\Psi||_{L^2(\Omega)}\|{\cal T}_{\eps}(\psi)(.,y)\|_{L^2(\Omega )}\\
+ &C||{\cal T}_{\eps}(\phi)(.,y)||_{L^2(\widetilde{\Omega}_{2\sqrt n\eps})}||\Psi||_{L^2(\widetilde{\Omega}_{2\sqrt n\eps})}.
\end{aligned}\end{equation*}  Therefore, for a.e. $y\in Y$ we have
$$||{\cal T}_{\eps, i}(\phi)(.,y +\Ge_i)-{\cal T}_{\eps, i}(\phi)(., y)||_{(H^1_\rho(\Omega))^{'}}\le  C
\eps\|{\cal T}_{\eps}(\psi)(.,y)\|_{L^2( \Omega)}+  C ||{\cal T}_{\eps}(\phi)(.,y)
||_{L^2(\widetilde{\Omega}_{2\sqrt n \eps})}$$
\noindent which leads to the following estimate of the difference between ${\cal T}_{\eps, i}(\phi)_{|_{\Omega\times
Y}}$ and one of its translated :
\begin{equation}\label{824}
\begin{aligned}
||{\cal T}_{\eps, i}(\phi)(.,.. +\Ge_i)-{\cal T}_{\eps, i}(\phi)||_{L^2(Y ; (H^1_\rho(\Omega))^{'})}
&\le   C\eps||\phi/\rho||_{L^2(\Omega)}+  C||\phi||_{L^2(\widetilde{\Omega}_{2\sqrt n\eps})}\\
&\le   C\eps||\phi/\rho||_{L^2(\Omega)}.
\end{aligned}\end{equation} The constant depends only on the boundary of  $\Omega$.
\vskip 1mm
\noindent{\it Step 2. } Let $\phi\in H^1_{1/\rho}(\Omega)$. The above estimate \eqref{824} applied to $\phi$ and its partial derivatives give
\begin{equation*}
\begin{aligned}  
||{\cal T}_{\eps, i}(\phi)(., .. +\Ge_i)-{\cal T}_{\eps, i}(\phi) ||_{ L^2(Y ; (H^1_\rho(\Omega))^{'})}
& \le C\eps||\phi/\rho||_{L^2(\Omega)}\\  
||{\cal T}_{\eps, i}(\nabla \phi)(., .. +\Ge_i)-{\cal T}_{\eps, i}(\nabla  \phi)||_{ [L^2(Y ; (H^1_\rho(\Omega))^{'}]^n)} &\le C\eps ||\phi||_{1/\rho}.
\end{aligned}\end{equation*}  which in turn lead to  (we recall  that  $\nabla_y\bigl({\cal T}_{\eps, i}(\phi)\big)=\eps{\cal T}_{\eps, i}(\nabla \phi)$). 
\begin{equation*}\begin{aligned} 
||{\cal T}_{\eps,i}(\phi)(., .. +\Ge_i)-{\cal T}_{\eps,i}(\phi)||_{H^1(Y ; (H^1_\rho(\Omega))^{'})} &\le
C\eps\big(||\phi/\rho||_{L^2(\Omega)}+\eps|| \phi||_{1/\rho}\big).
\end{aligned}\end{equation*} 
\noindent From these inequalities for $i\in\{1,\ldots,n\}$  we deduce the estimate of the difference of the traces of the function $y\longrightarrow {\cal T}_{\eps}(\phi)(.,y)$ on the faces $Y_i\doteq\{y\in \overline{Y}\; |\; y_i=0\}$ and $\Ge_i+Y_i$ 
\begin{equation}\label{825}
\begin{aligned} 
||{\cal T}_{\eps}(\phi)(.,.. +\Ge_i)-{\cal T}_{\eps}(\phi)||_{H^{1/ 2}(Y_i  ; (H^1_\rho(\Omega))^{'})}\le C\eps\big(||\phi/\rho||_{L^2(\Omega)}+\eps||\phi||_{1/\rho}\big).
\end{aligned}
\end{equation} These estimates ($i\in\{1,\ldots,n\}$) give a  measure of the periodic defect of the function 
 $y\longrightarrow{\cal T}_\eps(\phi)(.,y)$ (see \cite {GG1}). 
 
 \noindent Then we decompose  ${\cal T}_{\eps}(\phi)$ into the sum of an element
belonging to $H^1_{per}(Y ; L^2(\Omega ))$ and one to  $\bigl(H^1(Y ; L^2(\Omega))\big)^{\perp}$ (the
orthogonal of $H^1_{per}(Y ; L^2(\Omega))$ in $H^1(Y ; L^2(\Omega))$, see \cite {GG1})
\begin{equation}\label{826}
{\cal T}_{\eps}(\phi)=\widehat{\phi}_\eps+\overline{\phi}_\eps,\qquad\widehat{\phi}_\eps
\in H^1_{per}(Y ; L^2(\Omega )),\qquad \overline{\phi}_\eps\in  \bigl(H^1(Y ; L^2(\Omega))\big)^\perp.
\end{equation} The function $y\longrightarrow {\cal T}_\eps(\phi)(., y)$ takes its values in a finite dimensional  space, 
$$\overline{\phi}_\eps(., ..) =\sum_{\xi\in \Xi_\eps}\overline{\phi}_{\eps, \xi}(..)\chi_{\eps, \xi}(.)$$ where
$\chi_{\eps, \xi}(.)$ is the  characteristic  function of the cell $\eps(\xi+Y)$ and where $\overline{\phi}_{\eps, \xi}(..)\in
\bigl(H^1(Y)\big)^\perp$  (the orthogonal of $H^1_{per}(Y)$ in $H^1(Y)$, see \cite {GG1}). The   decomposition \eqref{826} is the same in $H^1(Y ; (H^1_\rho(\Omega))^{'})$ and  we have
\begin{equation*}
\begin{aligned} 
||\widehat{\phi}_\eps||^2_{H^1(Y ; L^2(\Omega))}+||\overline{\phi}_\eps||^2_{H^1(Y ; L^2(\Omega))}
=  ||{\cal T}_\eps(\phi)||^2_{H^1(Y ; L^2(\Omega))} 
\le  C\bigl\{||\phi||_{L^2(\Omega)}+\eps||\nabla \phi||_{[L^2(\Omega)]^n}\bigr\}^2.
\end{aligned}\end{equation*} It gives  the first inequality in \eqref{2.3} and the estimate of $\overline{\phi}_\eps$ in
$H^1(Y ; L^2(\Omega))$. From Theorem 2.2 in \cite {GG1} and  \eqref{825} we obtain a finer estimate of $\overline{\phi}_\eps$ in $H^1(Y ; (H^1_\rho(\Omega))^{'})$
$$||\overline{\phi}_\eps||_{H^1(Y ; (H^1_\rho(\Omega))^{'})} \le C\eps\big(||\phi/\rho||_{L^2(\Omega)}+\eps|| \phi||_{1/\rho}\big).$$ It is the second inequality in \eqref{2.3}.
\end{proof}
\begin{theorem}\label{Theorem 2.3 : } For  $\phi\in H^1_{1/\rho}(\Omega)$, there exists
$\widehat{\phi}_\eps\in H^1_{per}(Y ; L^2(\Omega))$ such that
\begin{equation}\label{8207}
\begin{aligned} 
&||\widehat{\phi}_\eps||_{H^1(Y ; L^2(\Omega))}\le C||\nabla \phi||_{[L^2(\Omega)]^n},\\  &||{\cal
T}_\eps(\nabla \phi)-\nabla \phi-\nabla_y\widehat{\phi}_\eps||_{ [L^2(Y ; (H^1_\rho(\Omega))^{'})]^n}
\le C\eps|| \phi||_{1/\rho}.
\end{aligned}\end{equation}
\noindent  The constants  depend only on  $\partial\Omega$.
\end{theorem}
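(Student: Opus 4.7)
The strategy is to derive Theorem \ref{Theorem 2.3 : } from Theorem \ref{Theorem 2.2} by applying the latter not to $\phi$ itself but to an $\eps$-corrected substitute whose $L^2$-norm is already $O(\eps)$. This is essential because the identity $\nabla_y{\cal T}_\eps(\psi)=\eps{\cal T}_\eps(\nabla\psi)$ turns a bound on ${\cal T}_\eps(\psi)-\widehat{\psi}_\eps$ into one on ${\cal T}_\eps(\nabla\psi)-\eps^{-1}\nabla_y\widehat{\psi}_\eps$ only after dividing by $\eps$, and the resulting right-hand side is $O(\eps)$ only when the norms of $\psi$ used in Theorem \ref{Theorem 2.2} already carry a factor of $\eps$.

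I would first introduce $\psi_\eps:=\phi-{\cal Q}_\eps(\phi_\eps)$, with $\phi_\eps$ the truncation from Lemma \ref{lem83}. Combining Lemma \ref{lem83}, \eqref{82} and Lemma \ref{lem26} shows that $\psi_\eps\in H^1_0(\Omega)\cap H^1_{1/\rho}(\Omega)$ and satisfies $\|\psi_\eps\|_{L^2(\Omega)}\le C\eps\|\nabla\phi\|_{[L^2(\Omega)]^n}$, $\|\psi_\eps/\rho\|_{L^2(\Omega)}\le C\eps\|\phi\|_{1/\rho}$, $\|\nabla\psi_\eps\|_{[L^2(\Omega)]^n}\le C\|\nabla\phi\|_{[L^2(\Omega)]^n}$, and $\|\psi_\eps\|_{1/\rho}\le C\|\phi\|_{1/\rho}$. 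Applying Theorem \ref{Theorem 2.2} to $\psi_\eps$ yields $\widehat{\psi}_\eps\in H^1_{per}(Y;L^2(\Omega))$ with $\|\widehat{\psi}_\eps\|_{H^1(Y;L^2(\Omega))}\le C\eps\|\nabla\phi\|_{[L^2(\Omega)]^n}$ and $\|{\cal T}_\eps(\psi_\eps)-\widehat{\psi}_\eps\|_{H^1(Y;(H^1_\rho(\Omega))^{'})}\le C\eps^2\|\phi\|_{1/\rho}$. Setting $\widehat{\phi}_\eps:=\widehat{\psi}_\eps/\eps$ then gives the first bound in \eqref{8207} at once.

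For the second bound, I differentiate the preceding estimate in $y$, use $\nabla_y{\cal T}_\eps(\psi_\eps)=\eps{\cal T}_\eps(\nabla\psi_\eps)$, and divide by $\eps$ to obtain $\|{\cal T}_\eps(\nabla\psi_\eps)-\nabla_y\widehat{\phi}_\eps\|_{[L^2(Y;(H^1_\rho(\Omega))^{'})]^n}\le C\eps\|\phi\|_{1/\rho}$. Expanding $\nabla\psi_\eps=\nabla\phi-\nabla{\cal Q}_\eps(\phi_\eps)$ and inserting $\pm{\cal M}_\eps(\nabla{\cal Q}_\eps(\phi_\eps))$ produces the four-term decomposition
\begin{equation*}
{\cal T}_\eps(\nabla\phi)-\nabla\phi-\nabla_y\widehat{\phi}_\eps=\bigl[{\cal T}_\eps(\nabla\psi_\eps)-\nabla_y\widehat{\phi}_\eps\bigr]+\bigl[{\cal T}_\eps(\nabla{\cal Q}_\eps(\phi_\eps))-{\cal M}_\eps(\nabla{\cal Q}_\eps(\phi_\eps))\bigr]+\bigl[{\cal M}_\eps(\nabla{\cal Q}_\eps(\phi_\eps))-\nabla{\cal Q}_\eps(\phi_\eps)\bigr]-\nabla\psi_\eps.
\end{equation*}
The first bracket is already controlled. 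Since $\psi_\eps\in H^1_0$, integration by parts against $\Psi\in H^1_\rho(\Omega)$ gives $\|\nabla\psi_\eps\|_{[(H^1_\rho(\Omega))^{'}]^n}\le C\|\psi_\eps/\rho\|_{L^2}\le C\eps\|\phi\|_{1/\rho}$, handling the fourth term. The third bracket is bounded componentwise by \eqref{M20} applied to $\nabla{\cal Q}_\eps(\phi_\eps)$, yielding $C\eps\|{\cal Q}_\eps(\phi_\eps)\|_{1/\rho}\le C\eps\|\phi\|_{1/\rho}$.

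The hard part is the second bracket: it is only $O(1)$ in the strong norm $[L^2(\Omega\times Y)]^n$, so the required $O(\eps)$ in the weaker dual $[L^2(Y;(H^1_\rho(\Omega))^{'})]^n$ must come from oscillatory cancellation. On each cell $\eps(\xi+Y)$ the integrand equals the $Y$-mean-zero deviation of the piecewise-multilinear polynomial $y\mapsto\nabla{\cal Q}_\eps(\phi_\eps)(\eps\xi+\eps y)$. Testing against $\Psi\in L^2(Y;H^1_\rho(\Omega))$, the zero-mean property lets me center ${\cal M}_\eps(\Psi(\cdot,y))(\eps\xi)$ about its $y$-average; a summation-by-parts on the lattice $\eps\Z^n$ then transfers the discrete finite difference implicit in $\nabla{\cal Q}_\eps(\phi_\eps)$ onto ${\cal M}_\eps(\Psi)$, exposing the weighted $x$-regularity of $\Psi$ needed to match the $\rho$-weight in $\|\phi\|_{1/\rho}$. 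Combining this with cell-wise Poincar\'e--Wirtinger on $Y$ and the weighted cell estimates of Lemma \ref{lem26} (notably \eqref{820RR1}) closes the bound at $C\eps\|\phi\|_{1/\rho}\|\Psi\|_{L^2(Y;H^1_\rho(\Omega))}$. Summing the four pieces yields the second inequality in \eqref{8207}.
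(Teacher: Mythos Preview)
Your proof follows essentially the same route as the paper's. Your $\psi_\eps$ is the paper's $\eps\underline{\phi}$, your $\widehat{\phi}_\eps=\widehat{\psi}_\eps/\eps$ is exactly the element the paper extracts from Theorem \ref{Theorem 2.2} applied to $\underline{\phi}$, and your four-term decomposition matches the paper's splitting of ${\cal T}_\eps(\nabla\phi)-\nabla\phi-\nabla_y\widehat{\phi}_\eps$ into $\big[{\cal T}_\eps(\nabla\Phi)-{\cal M}_\eps(\nabla\Phi)\big]+\big[{\cal M}_\eps(\nabla\Phi)-\nabla\Phi\big]-\eps\nabla\underline{\phi}+\big[\eps{\cal T}_\eps(\nabla\underline{\phi})-\nabla_y\widehat{\phi}_\eps\big]$ with $\Phi={\cal Q}_\eps(\phi_\eps)$. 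The treatment of the first, third and fourth brackets via Theorem \ref{Theorem 2.2}, \eqref{M20}, and integration by parts is identical to the paper's.

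Your description of the hard bracket $[{\cal T}_\eps(\nabla{\cal Q}_\eps(\phi_\eps))-{\cal M}_\eps(\nabla{\cal Q}_\eps(\phi_\eps))]$ is correct in spirit but slightly misdescribed in two places. First, the zero-mean identity the paper exploits is $\sum_{\Gi}[H^{(1)}(y-\Gi)-2^{-(n-1)}]=0$ over the set $\GI$ of transverse vertices, and it is used to center the values ${\cal M}_\eps(\phi_\eps)(\eps(\xi+\Gi))$ about ${\cal M}_\eps(\phi_\eps)(\eps\xi)$, not to center the test function about its $y$-average. Second, no Poincar\'e--Wirtinger on $Y$ is needed: after the discrete summation by parts (which, as you say, moves the finite difference onto the test function), the paper closes directly with the weighted finite-difference bounds $\eqref{M21}_3$ and $\eqref{M22}_3$ rather than \eqref{820RR1}. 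With those corrections, your outline and the paper's proof coincide.
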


\begin{proof} Let $\phi$ be in $H^1_{1/\rho}(\Omega)$ and $\psi=\phi/\rho\in H^1_0(\Omega)$. The function $\phi$ is extended by $0$ outside of $\Omega$. We decompose $\phi$ as 
$$\phi=\Phi+\eps \underline{\phi} ,
\quad\hbox{where}\enskip \Phi={\cal Q}_\eps(\phi_\eps)\quad\hbox{and} \quad
\underline{\phi}={1\over\eps}\Big(\phi-{\cal Q}_\eps(\phi_\eps)\Big)$$ where $\phi_\eps$ is given by Lemma \ref{lem83}. We have  $\Phi$ and $\underline{\phi}\in H^1_0(\Omega)$ and due to \eqref{M22} we get the following estimates:
\begin{equation}\label{82100}
||\Phi||_{1/\rho}+\eps|| \underline{\phi}||_{1/\rho}+||\underline{\phi}/\rho||_{L^2(\Omega)}\le  C||\phi||_{1/\rho}.
\end{equation}
\noindent The projection Theorem  \ref{Theorem 2.2} applied to $\underline{\phi}\in H^1_{1/\rho}(\Omega)$ gives an element $\widehat{\phi}_{\eps}$ in
$H^1_{per}(Y ; L^2(\Omega))$ such that
\begin{equation}\label{390}
\begin{aligned} 
 & ||\widehat{\phi}_{\eps}||_{H^1(Y ; L^2(\Omega))}\le C||\phi||_{1/\rho},\\
 & ||{\cal T}_\eps(\underline{\phi})-\widehat{\phi}_{\eps}||_{H^1(Y ;  (H^1_\rho(\Omega))^{'})}\le C\eps||\phi||_{1/\rho}.
\end{aligned}\end{equation}
\noindent   Now we  evaluate $||{\cal T}_\eps(\nabla \Phi)-\nabla \Phi||_{[L^2(Y ; (H^1_\rho(\Omega))^{'})]^n}$. 
\vskip 1mm
\noindent From \eqref{M20}, $\eqref{M22}_1$ and \eqref{82100} we get
\begin{equation}\label{8210}
\|\nabla \Phi-{\cal M}_\eps(\nabla\Phi)\|_{(H^1_\rho(\Omega;\R^n))^{'}}\le C\eps\|\phi\|_{1/\rho}.
\end{equation}
We set 
$$
\begin{aligned}
H^{(1)}(z)&=\left\{
\begin{aligned}
\big(1-|z_2|\big)(1-|z_3|\big)\ldots\big(1-|z_n|\big)\quad \hbox{if}\quad z=(z_1,z_2,\ldots, z_n)\in [-1,1]^{n},\\
0\hskip 3cm\hbox{if }\quad z\in \R^{n}\setminus [-1,1]^{n}.
\end{aligned}\right.\\
\GI&=\Big\{\Gi\;|\; \Gi =i_2\Ge_2+\ldots+i_n\Ge_n,\quad (i_2,\ldots, i_n)\in\{0,1\}^{n-1}\Big\}
\end{aligned}$$
For $\xi\in \Z^n$ and for every $(x,y)\in \eps(\xi+Y)\times Y$ we have
$$
\begin{aligned}
{\cal T}_\eps \Bigl({\partial\Phi\over \partial x_1}\Bigr)(x, y)&=\sum_{\Gi\in \GI}{{\cal M}_\eps(\phi_\eps)\big(\eps(\xi+\Ge_1+\Gi)\big)-{\cal M}_\eps(\phi_\eps)\big(\eps(\xi+\Gi)\big)\over \eps} H^{(1)}(y-\Gi)\\
{\cal  M}_\eps \Bigl({\partial\Phi\over \partial x_1}\Bigr)(\eps\xi)&={1\over 2^{n-1}}\sum_{\Gi\in \GI}{{\cal M}_\eps(\phi_\eps)\big(\eps(\xi+\Ge_1+\Gi)\big)-{\cal M}_\eps(\phi_\eps)\big(\eps(\xi+\Gi)\big)\over \eps} .
\end{aligned}
$$
Now, let us take  $\psi\in H^1_\rho(\Omega)$. We recall that $\phi_\eps(x)=0$ for a.e. $x\in \R^n\setminus \overline{\widetilde{\Omega}}_{6\sqrt n \eps}$, hence  $\Phi(x)=0$ for  $x\in \R^n\setminus \overline{\widetilde{\Omega}}_{3\sqrt n \eps}$; as a first consequence   $\ds {\cal M}_\eps\Bigl({\partial\Phi\over\partial x_1}\Bigr)=0$ in $\Lambda_\eps$. 

\noindent For  $y\in Y$ we have 
\begin{equation*}\begin{aligned} 
<{\cal T}_\eps \Bigl({\partial\Phi\over \partial x_1}\Bigr)(., y)-{\cal M}_\eps\Bigl({\partial\Phi\over\partial
x_1}\Bigr),\psi>_{(H^1_\rho(\Omega))^{'}, H^1_\rho(\Omega)}&=
\int_\Omega\Bigl\{{\cal T}_\eps\Bigl({\partial\Phi\over \partial x_1}\Bigr)(x , y)-{\cal M}_\eps\Bigl({\partial\Phi\over
\partial x_1}\Bigr)(x)\Bigr\}\psi(x)dx\\ 
&=\int_{\widehat{\Omega}_\eps}\Bigl\{{\cal T}_\eps \Bigl({\partial\Phi\over \partial x_1}\Bigr)(x, y)-{\cal M}_\eps\Bigl({\partial\Phi\over
\partial x_1}\Bigr)(x)\Bigr\}{\cal M}_\eps(\psi)(x)dx.
\end{aligned}
\end{equation*} 
Besides we have
\begin{equation*}
\begin{aligned}
\int_{\widehat{\Omega}_\eps}{\cal M}_\eps\Bigl({\partial\Phi\over\partial x_1}\Bigr)(x){\cal M}_\eps(\psi)(x)dx&=\eps^{n}\sum_{\xi\in \Z^n}{\cal  M}_\eps \Bigl({\partial\Phi\over \partial x_1}\Bigr)(\eps\xi){\cal M}_\eps(\psi)(\eps \xi)\\
&={\eps^{n}\over 2^{n-1}}\sum_{\xi\in \Z^n}\sum_{\Gi\in \GI}{{\cal M}_\eps(\phi_\eps)\big(\eps(\xi+\Ge_1+\Gi)\big)-{\cal M}_\eps(\phi_\eps)\big(\eps(\xi+\Gi)\big)\over \eps}{\cal M}_\eps(\psi)(\eps \xi)\\
&={\eps^{n}\over 2^{n-1}}\sum_{\xi\in \Z^n}\sum_{\Gi\in \GI}{{\cal M}_\eps(\psi)\big(\eps(\xi-\Ge_1)\big)-{\cal M}_\eps(\psi)\big(\eps\xi\big)\over \eps}{\cal M}_\eps(\phi_\eps)(\eps( \xi+\Gi))
\end{aligned}\end{equation*} and
\begin{equation*}
\begin{aligned}
&\int_{\widehat{\Omega}_\eps}{\cal T}_\eps\Bigl({\partial\Phi\over\partial x_1}\Bigr)(x, y){\cal M}_\eps(\psi)(x)dx\\
=&\eps^{n}\sum_{\xi\in \Z^n}\sum_{\Gi\in \GI}\Big[{{\cal M}_\eps(\phi_\eps)\big(\eps(\xi+\Ge_1+\Gi)\big)-{\cal M}_\eps(\phi_\eps)\big(\eps(\xi+\Gi)\big)\over \eps}\Big] H^{(1)}(y-\Gi){\cal M}_\eps(\psi)(\eps \xi)\\
=&\eps^{n}\sum_{\xi\in \Z^n}\sum_{\Gi\in \GI}{{\cal M}_\eps(\psi)\big(\eps(\xi-\Ge_1)\big)-{\cal M}_\eps(\psi)\big(\eps\xi)\big)\over \eps}H^{(1)}(y-\Gi){\cal M}_\eps(\phi_\eps)(\eps (\xi+\Gi))
\end{aligned}\end{equation*} Due to the fact that  $\phi_\eps(x)=0$ for a.e. $x\in \R^n\setminus \overline{\widetilde{\Omega}}_{6\sqrt n \eps}$, in the above summations we only  take the $\xi$'s belonging to  $\Xi_\eps$ and satisfying $\rho(\eps \xi)\ge 3\sqrt n\eps$. Hence
\begin{equation*}\begin{aligned} 
&<{\cal T}_\eps \Bigl({\partial\Phi\over \partial x_1}\Bigr)(., y)-{\cal M}_\eps\Bigl({\partial\Phi\over\partial
x_1}\Bigr),\psi>_{(H^1_\rho(\Omega))^{'}, H^1_\rho(\Omega)}\\
=&\eps^{n}\sum_{\xi\in \Z^n}{{\cal M}_\eps(\psi)\big(\eps(\xi-\Ge_1)\big)-{\cal M}_\eps(\psi)\big(\eps\xi)\big)\over \eps}\sum_{\Gi\in \GI}\big[H^{(1)}(y-\Gi)-{1\over 2^{n-1}}\big]{\cal M}_\eps(\phi_\eps)(\eps (\xi+\Gi)).
\end{aligned}\end{equation*} Thanks to the identity relation  $\ds \sum_{\Gi\in \GI}\big[H^{(1)}(y-\Gi)-{1\over 2^{n-1}}\big]=0$ we obtain that
$$\Big|\sum_{\Gi\in \GI}\big[H^{(1)}(y-\Gi)-{1\over 2^{n-1}}\big]{\cal M}_\eps(\phi_\eps)(\eps (\xi+\Gi))\Big|\le \sum_{\Gi\in \GI}\big|{\cal M}_\eps(\phi_\eps)(\eps (\xi+\Gi))-{\cal M}_\eps(\phi_\eps)(\eps\xi)\big|.$$ 
Taking into account the last  equality and inequality above we deduce that
\begin{equation*}\begin{aligned} 
&<{\cal T}_\eps \Bigl({\partial\Phi\over \partial x_1}\Bigr)(., y)-{\cal M}_\eps\Bigl({\partial\Phi\over\partial
x_1}\Bigr),\psi>_{(H^1_\rho(\Omega))^{'}, H^1_\rho(\Omega)}\\
=&\eps^{n}\sum_{\xi\in \Z^n}\sum_{\Gi\in \GI}\Big|{{\cal M}_\eps(\psi)\big(\eps(\xi-\Ge_1)\big)-{\cal M}_\eps(\psi)\big(\eps\xi)\big)\over \eps}\Big|\big|{\cal M}_\eps(\phi_\eps)(\eps (\xi+\Gi))-{\cal M}_\eps(\phi_\eps)(\eps\xi)\big|\\
=&{1\over \eps}\sum_{\Gi\in \GI}\int_\Omega\big|{\cal M}_\eps(\psi)(\cdot-\eps\Ge_1)-{\cal M}_\eps(\psi)\big|\,\big|{\cal M}_\eps(\phi_\eps)(\cdot+\eps\Gi)-{\cal M}_\eps(\phi_\eps)\big|\\
\le&{C\over \eps}\sum_{\Gi\in \GI}\big\|\rho\big({\cal M}_\eps(\psi)(\cdot-\eps\Ge_1)-{\cal M}_\eps(\psi)\big)\big\|_{L^2(\Omega)}\Big\|{1\over \rho}\big({\cal M}_\eps(\phi_\eps)(\cdot+\eps\Gi)-{\cal M}_\eps(\phi_\eps)\big)\Big\|_{L^2(\Omega)}.
\end{aligned}\end{equation*} Due to $\eqref{M21}_3$ and $\eqref{M22}_3$ we finally get
$$
<{\cal T}_\eps \Bigl({\partial\Phi\over \partial x_1}\Bigr)(., y)-{\cal M}_\eps\Bigl({\partial\Phi\over\partial
x_1}\Bigr),\psi>_{(H^1_\rho(\Omega))^{'}, H^1_\rho(\Omega)}\le C\eps ||\phi_\eps||_{1/\rho}||\psi||_\rho.
$$ It leads to
\begin{equation}\label{39}
\Big\|{\cal T}_\eps \Bigl({\partial\Phi\over \partial x_1}\Bigr)-{\cal M}_\eps\Bigl({\partial\Phi\over\partial
x_1}\Bigr)\Big\|_{L^\infty(Y;(H^1_\rho(\Omega))^{'})}\le C\eps ||\phi_\eps||_{1/\rho}.
\end{equation}
\noindent Besides we have
\begin{equation*}\begin{aligned}
\int_\Omega {\partial \underline{\phi}\over \partial x_1}(x)\psi(x)dx=-\int_\Omega\underline{\phi}(x){\partial \psi\over \partial x_1}(x)dx\le
C||\underline{\phi}/\rho||_{L^2(\Omega)}||\psi||_{\rho}\le C||\phi||_{1/\rho}||\psi||_{\rho}.
\end{aligned}\end{equation*} Hence
$\ds \Big\|\eps{\partial \underline{\phi}\over \partial x_1}\Big\|_{(H^1_\rho(\Omega;\R^n))^{'}}\le  C\eps||\phi||_{1/\rho}$. This last estimate with  $\eqref{E200}_2$, \eqref{8210} and \eqref{39} yield 
$$\Big\|{\cal T}_\eps \Bigl({\partial\Phi\over \partial x_1}\Bigr)-{\partial\phi\over\partial
x_1}\Big\|_{L^\infty(Y;(H^1_\rho(\Omega))^{'})}\le C\eps ||\phi_\eps||_{1/\rho}.$$ In the same way  we prove the  estimates for the partial derivatives of $\Phi$ with respect to $x_i$, $i\in\{2,\ldots,n\}$.  Hence we get $\ds \|{\cal T}_\eps(\nabla \Phi)-\nabla\phi\|_{[L^\infty(Y;(H^1_\rho(\Omega))^{'})]^n}\le C\eps ||\phi_\eps||_{1/\rho}$. Then   thanks to \eqref{390} the second estimate in \eqref{8207} is proved.
\end{proof}
\section{Reminds about the classical periodic homogenization problem}\label{Rhom}
\noindent We consider the homogenization problem
\begin{equation}\label{Pbeps}
\phi^{\eps}\in H^1_0(\Omega), \qquad \int_\Omega A_\eps(x)\nabla\phi^{\eps}(x)\nabla\psi(x)dx=
\int_{\Omega}f(x)\psi(x)dx, \qquad \forall \psi\in H^1_0(\Omega),
\end{equation}  where 

$\bullet$ $\ds A_\eps (x)=A\Big(\Big\{{x\over \eps}\Big\}\Big)$ for a.e. $x\in \Omega$, where $A$ is a square matrix  belonging to
$L^\infty(Y;\R^{n\times n})$ and satisfying the condition of uniform  ellipticity  $c|\xi|^2\le A(y)\xi\cdot\xi $  for a.e. $y\in Y$, with $c$ a strictly positive constant,
 
$\bullet$ $f\in L^2(\Omega)$.
\vskip 2mm
\noindent We showed in \cite{CDG} that 
$${\cal T}_\eps(\nabla \phi^\eps)\longrightarrow \nabla\Phi+\nabla_y\widehat{\phi}\quad \hbox{strongly in}\quad L^2(\Omega\times Y;\R^n)$$ where  $(\Phi,\widehat{\phi})\in  H^1_0(\Omega)\times L^{2}(\Omega ; H^1_{per}(Y))$ is the solution of the 
problem of unfolding homogenization
 \begin{equation*}
\begin{aligned}
 &\forall (\Psi,\widehat{\psi})\in H^1_0(\Omega)\times L^{2}(\Omega ; H^1_{per}(Y)) \\ 
 & \int_{\Omega}\int_{Y}A(y)\bigl\{\nabla \Phi(x)+\nabla_y\widehat{\phi}(x,y)\bigr\}\,
\bigl\{\nabla \Psi(x)+\nabla_y\widehat{\psi}(x,y)\bigr\}dxdy=\int_\Omega f(x)\Psi(x)dx.
\end{aligned}\end{equation*}

\noindent   The correctors $\chi_i$, $i\in\{1,\ldots,n\}$, are the solutions of the  variational problems 
\begin{equation}\label{cor}
\begin{aligned}
&\chi_i\in H^1_{per}(Y),\qquad\int_Y \chi_i=0,\\
&  \int _YA(y)\nabla_y\bigl(\chi_i(y)+y_i\big)\nabla_y\psi(y)dy=0,\qquad
\forall\psi\in H^1_{per}(Y).
\end{aligned}\end{equation}
\noindent They allow  to express $\widehat{\phi}$ in terms of the partial derivatives of $\Phi$ 
\begin{equation}\label{phihat}
\widehat{\phi}=\sum_{i=1}^n{\partial\Phi\over\partial x_i}\chi_i
\end{equation}
 and to give the  homogenized problem satisfied by $\Phi$
\begin{equation}\label{Homo}
\Phi\in H^1_0(\Omega),\qquad \int_\Omega{\cal A}\nabla \Phi(x)\nabla \Psi(x)dx=\int_\Omega f(x)\Psi(x)dx,\qquad \forall \Psi\in H^1_0(\Omega)
\end{equation}
 where (see \cite{CDG})
\begin{equation}\label{Aij}
{\cal A}_{ij}=\sum_{k,l=1}^n\int_Y  a_{kl}(y){\partial(y_j+\chi_j(y))\over \partial y_l}{\partial(y_i+\chi_i(y))\over\partial y_k}dy.
\end{equation}
\section{An operator from  $H^{-1/2}(\partial \Omega)$ into $L^2(\Omega)$}\label{Op}
  
{\it From now on,  $\Omega$ is a bounded domain with a ${\cal C}^{1,1}$ boundary or an open bounded convex set.}
\vskip 1mm
In this section we first  introduce a lifting operator $\GT$ (defined by \eqref{500}) from $H^{1/2}(\partial\Omega)$ into $H^1(\Omega)$. This operator and the estimate \eqref{phifinit} are in fact sufficient to obtain the error estimates with a non-homogeneous Dirichlet condition (Theorem \ref{TH3}); one of the aim of this paper. Then we extend this operator. The extension of $\GT$  from $H^{-1/2}(\partial\Omega)$ into $H^1_\rho(\Omega)$ is essential in order to get a sharper estimate \eqref{Estphi2} than $\eqref{Estphi}_1$. In Theorem \ref{TH65} we give an application based on \eqref{Estphi2}, in this theorem we investigate a first case of strongly oscillating boundary data.
\vskip 2mm
Let $g$ be  in $H^{1/2}(\partial \Omega)$, there exists one $\phi_g\in H^1(\Omega)$ such that 
\begin{equation}\label{500}
\hbox{div}({\cal A}\nabla\phi_g)=0\qquad \hbox{in}\quad \Omega,\qquad \phi_g=g\qquad \hbox{on}\quad \partial \Omega
\end{equation} where  ${\cal A}$ is the matrix given by \eqref{Aij}. We have
\begin{equation}\label{phifinit}
||\phi_g||_{H^1(\Omega)}\le C||g||_{H^{1/2}(\partial\Omega)}.
\end{equation} We denote by $\GT$  the  operator from $H^{1/2}(\partial \Omega)$ into $H^1(\Omega)$ which associates to  $g\in H^{1/2}(\partial \Omega)$ the function $\phi_g\in H^1(\Omega)$.
\vskip 1mm
Now, let   $(\psi, \Psi)$ be a couple in $[{\cal C}^\infty(\overline{\Omega})]^2$,  integrating by parts over $\Omega$ gives
\begin{equation*}
\int_\Omega{\cal A}\nabla\psi(x)\nabla \Psi(x)dx=-\int_\Omega \psi(x) \hbox{div}({\cal A}^T\nabla\Psi)(x)dx+\int_{\partial\Omega}\psi(x)( {\cal A}^T\nabla\Psi)(x)dx\cdot\nu(x) d\sigma.
\end{equation*} The space ${\cal C}^\infty(\overline{\Omega})$ being dense in $H^1(\Omega)$ and $H^2(\Omega)$, hence the above equality holds true for any  $\psi\in H^1(\Omega)$ and any $\Psi\in H^2(\Omega)$.  Hence, for   $\Psi\in H^1_0(\Omega)\cap H^2(\Omega)$ and $\phi_g$ defined by \eqref{500} we get
\begin{equation}\label{phif}
\int_\Omega \phi_g(x)\hbox{div}({\cal A}^T\nabla\Psi)(x)dx=\int_{\partial\Omega}g(x)\,({\cal A}^T\nabla\Psi)(x)\cdot\nu(x) d\sigma.
\end{equation} 
\vskip 1mm
Under the assumption  on $\Omega$   the function $\Psi(g)$ defined by
 $$\Psi(g)\in H^1_0(\Omega),\qquad  \hbox{div}({\cal A}^T\nabla\Psi(g))=\phi_g\qquad \hbox{in}\quad \Omega$$ belongs to $H^1_0(\Omega)\cap H^2(\Omega)$ and 
satisfies
$$
||\Psi(g)||_{H^2(\Omega)}\le C||\phi_g||_{L^2(\Omega)}.$$ Taking $\Psi=\Psi(g)$ in the above equality \eqref{phif} we obtain
\begin{equation*}
\begin{aligned}
\int_\Omega |\phi_g(x)|^2dx=\int_{\partial\Omega}g(x)\,( {\cal A}^T\nabla\Psi(g)(x))\cdot\nu(x) d\sigma & \le ||g||_{H^{-1/2}(\partial\Omega)}||( {\cal A}^T\nabla\Psi(g))\cdot\nu||_{H^{1/2}(\partial\Omega)}\\
& \le C ||g||_{H^{-1/2}(\partial\Omega)}||\Psi(g)||_{H^2(\Omega)}.
\end{aligned}\end{equation*} This leads to
\begin{equation}\label{T}
||\phi_g||_{L^2(\Omega)}\le C ||g||_{H^{-1/2}(\partial\Omega)}.
\end{equation} 
\vskip 1mm
 Due to \eqref{T}, the operator $\GT$ admits an extension (still denoted $\GT$) from  $H^{-1/2}(\partial \Omega)$ into $L^2(\Omega)$ and we have
\begin{equation*}
\forall g\in H^{-1/2}(\partial\Omega),\qquad ||\GT(g)||_{L^2(\Omega)}\le C ||g||_{H^{-1/2}(\partial\Omega)}.
\end{equation*} For $g\in H^{-1/2}(\partial\Omega)$, we also denote $\phi_g=\GT(g)$. This function  is the ''very weak'' solution of the problem
 $$\phi_g\in L^2(\Omega),\qquad \hbox{div}({\cal A}\nabla\phi_g)=0\qquad \hbox{in}\quad \Omega,\qquad \phi_g=g\qquad \hbox{on}\quad \partial \Omega$$
 or  the solution of the following:
\begin{equation}\label{PBphig}
\begin{aligned}
&\phi_g\in L^2(\Omega),\\
& \int_\Omega \phi_g(x)\, \hbox{div}({\cal A}^T\nabla\psi(x))dx=<g, ( {\cal A}^T\nabla\psi)\cdot\nu>_{H^{-1/2}(\partial\Omega) , H^{1/2}(\partial\Omega)},\\
&\forall \psi\in H^1_0(\Omega)\cap H^2(\Omega).
\end{aligned}\end{equation}
\begin{lemma}\label{GT}
The operator $\GT$ is a bicontinuous linear operator from $H^{-1/2}(\partial\Omega)$ onto
$$\GH=\Bigl\{\phi\in L^2(\Omega)\;\; |\;\;  \hbox{div}({\cal A}\nabla\phi)=0\quad \hbox{in}\quad \Omega\Big\}.$$ There exists a  constant $C\ge 1$ such that 
\begin{equation}\label{30}
\forall g\in H^{-1/2}(\partial\Omega),\qquad {1\over C}||g||_{H^{-1/2}(\partial\Omega)}\le ||\GT(g)||_{L^2(\Omega)}\le C||g||_{H^{-1/2}(\partial\Omega)}.
\end{equation}
\end{lemma}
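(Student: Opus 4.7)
The upper bound in \eqref{30} is already contained in \eqref{T}, and $\GT(g)\in\GH$ is clear from \eqref{500}, so the remaining task is to show that $\GT:H^{-1/2}(\partial\Omega)\to\GH$ is a bijection with a continuous inverse. Since $\GH$ is closed in $L^2(\Omega)$, once bijectivity is established the open mapping theorem supplies continuity of the inverse, but I will obtain the quantitative lower bound directly by constructing the inverse through duality. The standing hypothesis that $\Omega$ is of class ${\cal C}^{1,1}$ or an open bounded convex set permits the use of $H^2$-elliptic regularity for the transposed operator $-\hbox{div}({\cal A}^T\nabla\cdot)$ together with the Lions--Magenes trace theorem: for every $\mu\in H^{1/2}(\partial\Omega)$ there exists $\psi\in H^1_0(\Omega)\cap H^2(\Omega)$ with $({\cal A}^T\nabla\psi)\cdot\nu=\mu$ on $\partial\Omega$ and $\|\psi\|_{H^2(\Omega)}\le C\|\mu\|_{H^{1/2}(\partial\Omega)}$; such a $\psi$ is obtained by prescribing $\psi|_{\partial\Omega}=0$ and $\partial_\nu\psi|_{\partial\Omega}=\mu/(\nu\cdot{\cal A}\nu)$, the denominator being uniformly positive since ${\cal A}$ is positive definite.

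Injectivity of $\GT$ is then immediate from \eqref{PBphig}: if $\GT(g)=0$, the right-hand side of \eqref{PBphig} vanishes for every $\psi\in H^1_0\cap H^2$, forcing $\langle g,\mu\rangle=0$ for all $\mu\in H^{1/2}(\partial\Omega)$. For surjectivity and the lower bound, I fix $\phi\in\GH$ and, for each $\mu\in H^{1/2}(\partial\Omega)$, pick $\psi$ associated to $\mu$ as above and set
$$\langle g,\mu\rangle := \int_\Omega\phi(x)\,\hbox{div}({\cal A}^T\nabla\psi)(x)\,dx.$$
Once the value on the right is shown to depend only on $\mu$, the estimate $|\langle g,\mu\rangle|\le\|\phi\|_{L^2(\Omega)}\|\psi\|_{H^2(\Omega)}\le C\|\phi\|_{L^2(\Omega)}\|\mu\|_{H^{1/2}(\partial\Omega)}$ yields $g\in H^{-1/2}(\partial\Omega)$ together with the sought lower bound, and comparison with \eqref{PBphig} combined with the already established injectivity forces $\GT(g)=\phi$.

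The step I expect to be the principal obstacle is thus the well-definedness of $g$, i.e.\ the vanishing of $\int_\Omega\phi\,\hbox{div}({\cal A}^T\nabla\eta)$ for $\eta=\psi_1-\psi_2$ whenever $\psi_1,\psi_2$ realise the same $\mu$. Here $\eta\in H^1_0\cap H^2$, $\eta|_{\partial\Omega}=0$ (which kills the tangential gradient on $\partial\Omega$) and $({\cal A}^T\nabla\eta)\cdot\nu=0$ (which, by positive-definiteness of ${\cal A}$, kills the normal derivative); hence $\nabla\eta$ vanishes on $\partial\Omega$ and $\eta$ belongs in fact to $H^2_0(\Omega)=\overline{{\cal C}^\infty_c(\Omega)}^{\,H^2}$. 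Since the linear functional $\varphi\mapsto\int_\Omega\phi\,\hbox{div}({\cal A}^T\nabla\varphi)$ is continuous on $H^2(\Omega)$ and, by the very definition of $\hbox{div}({\cal A}\nabla\phi)=0$ as a distribution on $\Omega$, vanishes on every $\varphi\in {\cal C}^\infty_c(\Omega)$, density of ${\cal C}^\infty_c(\Omega)$ in $H^2_0(\Omega)$ closes the argument.
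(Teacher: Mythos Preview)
Your proof is correct and follows essentially the same route as the paper: the paper also builds $g$ from $\phi\in\GH$ via a continuous lifting $\GR:H^{1/2}(\partial\Omega)\to H^1_0(\Omega)\cap H^2(\Omega)$ realising the prescribed conormal trace $({\cal A}^T\nabla\GR(h))\cdot\nu=h$, and then uses exactly your density argument (vanishing of $\int_\Omega\phi\,\hbox{div}({\cal A}^T\nabla\psi)$ on ${\cal C}^\infty_c(\Omega)$, hence on $H^2_0(\Omega)$) to pass from the lifted test functions to all of $H^1_0\cap H^2$. Your additional explicit treatment of injectivity and of the lifting construction via $\partial_\nu\psi=\mu/(\nu\cdot{\cal A}\nu)$ are fine elaborations but not departures from the paper's strategy.
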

\begin{proof} Let $\phi$ be in $\GH$ we are going to prove that there exists an element $g\in H^{-1/2}(\partial\Omega)$ such that $\GT(g)=\phi$. To do that, we consider  a continuous linear lifting operator $\GR$ from $H^{1/2}(\partial\Omega)$ into $H^1_0(\Omega)\cap H^2(\Omega)$ satisfying for any $h\in H^{1/2}(\partial\Omega)$
$$
\begin{aligned}
&\GR(h)\in H^1_0(\Omega)\cap H^2(\Omega),\\
& {\cal A}^T\nabla\GR(h)_{|\partial \Omega}\cdot\nu=h\qquad \hbox{on}\quad \partial\Omega,\\
 &||\GR(h)||_{H^2(\Omega)}\le C||h||_{H^{1/2}(\partial\Omega)}.
 \end{aligned}$$ The map $h\longmapsto \ds  \int_\Omega \phi\, \hbox{div}({\cal A}^T\nabla \GR(h))$ is a continuous linear form defined over $H^{1/2}(\partial \Omega)$. Thus, there exists $g\in H^{-1/2}(\partial\Omega)$ such that
\begin{equation}\label{36}
 \int_\Omega \phi\, \hbox{div}({\cal A}^T\nabla \GR(h))=<g,  h >_{H^{-1/2}(\partial\Omega) , H^{1/2}(\partial\Omega)}.
 \end{equation} Since $\phi\in \GH$, we deduce that for any $\psi\in {\cal C}^\infty_0(\Omega)$ we have  $\ds \int_\Omega \phi\, \hbox{div}({\cal A}^T\nabla \psi)=0$. Therefore, for any $\psi\in H^2_0(\Omega)$ we have $\ds \int_\Omega \phi\, \hbox{div}({\cal A}^T\nabla \psi)=0$. Taking into account \eqref{36} we get
$$\int_\Omega \phi\, \hbox{div}({\cal A}^T\nabla\psi)=<g, ( {\cal A}^T\nabla\psi)\cdot\nu>_{H^{-1/2}(\partial\Omega) , H^{1/2}(\partial\Omega)},\qquad  \forall \psi\in H^1_0(\Omega)\cap H^2(\Omega).$$ It yields $\phi=\phi_g$ and then \eqref{30}.
 \end{proof}
  \begin{remark} It is well  known (see e.g. \cite{GDecomp}) that every function $\phi\in\GH$ also belongs to $H^1_\rho(\Omega)$ and verifies
\begin{equation}\label{38}
||\phi||_{\rho}\le C||\phi||_{L^2(\Omega)}.
\end{equation} 
\end{remark}

\section{Error estimates with a non-homogeneous Dirichlet condition}\label{MS}

\begin{theorem}\label{TH1} Let $\big(\phi^\eps\big)_{\eps>0}$ be a sequence of functions belonging to $H^1(\Omega)$ such that 
\begin{equation}\label{CD1}
\begin{aligned}
 &\hbox{div}\big( A_\eps\nabla\phi^\eps\big)=0\qquad \hbox{in}\quad\Omega.
\end{aligned} \end{equation} Setting $g_\eps=\phi^\eps_{|\partial\Omega}$ and  $\phi_{g_\eps}=\GT(g_\eps)\in H^1(\Omega)$, there exists $\eps_0>0$ such that for every $\eps\le \eps_0$  we have
\begin{equation}\label{Estphi}
\begin{aligned} 
& ||\phi^\eps||_{H^1(\Omega)}\le C||g_\eps||_{H^{1/2}(\partial\Omega)},\hskip 15mm ||\phi^\eps-\phi_{g_\eps}||_{L^2(\Omega)}\le C \eps^{1/2}||g_\eps||_{H^{1/2}(\partial\Omega)},\\
&\Big\|\rho\Big(\nabla\phi^\eps-\nabla\phi_{g_\eps}-\sum_{i=1}^n {\cal Q}_\eps\Bigl({\partial\phi_{g_\eps}\over\partial x_i}\Bigr)\nabla_y\chi_i\Big({.\over\eps}\Big)\Big)\Big\|_{L^2(\Omega;\R^n)} \le C \eps^{1/2}||g_\eps||_{H^{1/2}(\partial\Omega)}.
\end{aligned}\end{equation} Moreover we have
\begin{equation}\label{Estphi2}
 ||\phi^\eps||_\rho\le C \big(\eps^{1/2}||g_\eps||_{H^{1/2}(\partial\Omega)}+||g_\eps||_{H^{-1/2}(\partial\Omega)}\big).
\end{equation} The $\chi_i$'s are the correctors introduced in Section \ref{Rhom} and $\GT$ is the operator defined in Section \ref{Op}.
\end{theorem}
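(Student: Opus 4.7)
The plan is to piggy-back on the scheme of Theorem 3.2 of \cite{GG2}, substituting the weighted projection Theorem \ref{Theorem 2.3 : } at the point where $H^2$ regularity of the homogenized solution would usually be invoked. The argument splits naturally into three pieces matching the three displayed estimates.

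First, $\eqref{Estphi}_1$ is obtained by the standard energy argument: pick any continuous right inverse of the trace giving $\tilde g_\eps\in H^1(\Omega)$ with $\|\tilde g_\eps\|_{H^1(\Omega)}\le C\|g_\eps\|_{H^{1/2}(\partial\Omega)}$, test the weak form of \eqref{CD1} against $\phi^\eps-\tilde g_\eps\in H^1_0(\Omega)$, and invoke uniform ellipticity of $A_\eps$ together with Poincar\'e's inequality. Combining with \eqref{phifinit} I also obtain $\|\phi_{g_\eps}\|_{H^1(\Omega)}\le C\|g_\eps\|_{H^{1/2}(\partial\Omega)}$, which will be used repeatedly.

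Next, I set $u^\eps=\phi^\eps-\phi_{g_\eps}\in H^1_0(\Omega)$; using $\hbox{div}(\mathcal A\nabla\phi_{g_\eps})=0$ it satisfies the residual equation $\hbox{div}(A_\eps\nabla u^\eps)=\hbox{div}\big((\mathcal A-A_\eps)\nabla\phi_{g_\eps}\big)$. Following the blueprint of Theorem 3.2 in \cite{GG2} I introduce the corrector approximation
\begin{equation*}
v^\eps=u^\eps-\eps\sum_{i=1}^n {\cal Q}_\eps\Bigl({\partial\phi_{g_\eps}\over\partial x_i}\Bigr)\chi_i\Big({\cdot\over\eps}\Big),
\end{equation*}
with the scale-splitting built from the truncated lifting $\phi_{g_\eps,\eps}$ of Lemma \ref{lem83} so that $v^\eps$ retains a zero trace. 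Unfolding the residual equation and exploiting the corrector identity \eqref{cor} turns the bulk part of the remainder into an expression of the form $\mathcal T_\eps(\nabla u^\eps)-\nabla u^\eps-\nabla_y\widehat u_\eps$ plus commutators produced by $\mathcal Q_\eps$; the former is precisely what the new Theorem \ref{Theorem 2.3 : } estimates in $[L^2(Y;(H^1_\rho(\Omega))')]^n$ by $C\eps\|u^\eps\|_{1/\rho}$, while the commutators are absorbed by \eqref{820}--\eqref{8200} and \eqref{820RR}--\eqref{820RR1}. A boundary-layer contribution of size $\eps^{1/2}\|g_\eps\|_{H^{1/2}(\partial\Omega)}$ appears from the $6\sqrt n\eps$-tube where the truncation of Lemma \ref{lem83} acts, bounded via the trace/tube inequality \eqref{403}. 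Testing against $v^\eps$ and using coercivity yields $\eqref{Estphi}_3$. The $L^2$ bound $\eqref{Estphi}_2$ then follows by Aubin--Nitsche duality: under the $\mathcal C^{1,1}$/convexity assumption the adjoint problem $\hbox{div}(\mathcal A^T\nabla\Psi)=u^\eps$, $\Psi\in H^1_0(\Omega)$, produces $\Psi\in H^2(\Omega)$ with $\|\Psi\|_{H^2}\le C\|u^\eps\|_{L^2}$, and pairing the residual equation with $\Psi$ (and reapplying Theorem \ref{Theorem 2.3 : } to the smooth factor $\Psi$) gives the $\eps^{1/2}$ rate.

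For the refined bound \eqref{Estphi2} I write $\|\phi^\eps\|_\rho\le\|\phi_{g_\eps}\|_\rho+\|u^\eps\|_\rho$. The first summand is exactly the point where the extension of $\GT$ to $H^{-1/2}(\partial\Omega)$ is essential: $\phi_{g_\eps}\in\GH$, so \eqref{38} followed by Lemma \ref{GT} gives $\|\phi_{g_\eps}\|_\rho\le C\|\phi_{g_\eps}\|_{L^2}\le C\|g_\eps\|_{H^{-1/2}(\partial\Omega)}$. For $\|u^\eps\|_\rho=\|u^\eps\|_{L^2}+\|\rho\nabla u^\eps\|_{L^2}$, the $L^2$ part is $O(\eps^{1/2}\|g_\eps\|_{H^{1/2}})$ by $\eqref{Estphi}_2$, and the weighted gradient part is obtained by combining $\eqref{Estphi}_3$ with an $L^2$ bound on the corrector contribution $\rho\sum_i\mathcal Q_\eps(\partial\phi_{g_\eps}/\partial x_i)\nabla_y\chi_i(\cdot/\eps)$, which via \eqref{820}, \eqref{820RR1} and the triangle inequality (with $\mathcal M_\eps$) is controlled by $C\|\phi_{g_\eps}\|_\rho\le C\|g_\eps\|_{H^{-1/2}(\partial\Omega)}$. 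The main obstacle throughout is the absence of global $H^2$ regularity for $\phi_{g_\eps}$; the role of Theorem \ref{Theorem 2.3 : } is exactly to replace that missing regularity by the weighted norm $\|\cdot\|_\rho$, at the unavoidable cost of stating the error only in the $\rho$-weighted form $\eqref{Estphi}_3$ rather than as a plain global $H^1$ estimate.
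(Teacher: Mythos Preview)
Your overall strategy mirrors the paper's, but there is a genuine gap in how you deploy the weighted projection Theorem \ref{Theorem 2.3 : }. You propose to apply it to $u^\eps=\phi^\eps-\phi_{g_\eps}$, obtaining a bound $C\eps\|u^\eps\|_{1/\rho}$. But the theorem requires its argument to lie in $H^1_{1/\rho}(\Omega)$, i.e.\ $\nabla u^\eps/\rho\in L^2$, and nothing in the hypotheses gives this: $u^\eps$ is merely in $H^1_0(\Omega)$, and its gradient may well blow up like $1/\rho$ near $\partial\Omega$ (this is exactly the boundary-layer phenomenon the weighted estimate is designed to \emph{quantify}, not assume away). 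So the step ``the former is precisely what the new Theorem \ref{Theorem 2.3 : } estimates by $C\eps\|u^\eps\|_{1/\rho}$'' fails, and with it your derivation of $\eqref{Estphi}_3$. The same objection applies to ``reapplying Theorem \ref{Theorem 2.3 : } to the smooth factor $\Psi$'' in your duality step: $\Psi\in H^2\cap H^1_0$ does not imply $\nabla\Psi/\rho\in L^2$.

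The paper's remedy is to reverse the roles and the order. For $\eqref{Estphi}_2$ it uses the \emph{old} projection theorem (Theorem 2.3 of \cite{GG2}, requiring only $\phi^\eps\in H^1$) together with the adjoint correctors $\overline\chi_i$ and an $H^2$ dual test function $U$; the $\eps^{1/2}$ loss comes from the boundary-tube estimate \eqref{phibord2}. For $\eqref{Estphi}_3$, the \emph{new} Theorem \ref{Theorem 2.3 : } is applied not to the unknown but to the test function, in the form $\rho U$ with $U\in H^1_0(\Omega)$ arbitrary: then $\rho U\in H^1_{1/\rho}(\Omega)$ automatically, with $\|\rho U\|_{1/\rho}\le C\|U\|_{H^1}$. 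The matching strong norm on the $\phi_{g_\eps}$ side comes from an observation you omit: each $\partial\phi_{g_\eps}/\partial x_i$ is itself ${\cal A}$-harmonic, hence lies in $\GH\subset H^1_\rho(\Omega)$ by \eqref{38}, yielding the crucial estimate \eqref{Est2}. This is what allows the $(H^1_\rho)'$--$H^1_\rho$ pairing to close. One then chooses $U=\rho\big(\phi^\eps-\phi_{g_\eps}-\eps\sum_i{\cal Q}_\eps(\partial_i\phi_{g_\eps})\chi_i(\cdot/\eps)\big)$ and uses the already-established $\eqref{Estphi}_2$. Your treatment of \eqref{Estphi2} is essentially correct once $\eqref{Estphi}_2$ and $\eqref{Estphi}_3$ are available.
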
 
\begin{proof}  {\it Step 1.} We prove the first estimate in \eqref{Estphi}. From  Section \ref{Op}  we get
\begin{equation}\label{Fesp}
||\phi_{g_\eps}||_{H^1(\Omega)}\le C||g_\eps||_{H^{1/2}(\partial\Omega)}\qquad ||\phi_{g_\eps}||_\rho\le C||g_\eps||_{H^{-1/2}(\partial\Omega)}.
\end{equation}

 \noindent We write \eqref{CD1} in the following  weak form:
  \begin{equation}\label{SecVar}
  \begin{aligned}
 &\phi^\eps=\check \phi_\eps+\phi_{g_\eps},\quad \check \phi_\eps\in H^1_0(\Omega) \\ 
 &\int_\Omega A_\eps\nabla \check \phi_\eps\nabla v= - \int_\Omega A_\eps\nabla \phi_{g_\eps}\nabla v \qquad \forall v\in H^1_0(\Omega).
 \end{aligned}\end{equation} 
\noindent The solution $\check \phi_\eps$ of the above variational problem satisfies
\begin{equation*}
||\check \phi_\eps||_{H^1(\Omega)}\le C||\nabla \phi_{g_\eps}||_{L^2(\Omega;\R^n)}.
\end{equation*} Hence, from  $\eqref{Fesp}_1$ and the above estimate we get  the first inequality in \eqref{Estphi}. 
\vskip 1mm
\noindent{\it Step 2. } We prove the second estimate in \eqref{Estphi}.  

\noindent For every test function $v\in H^1_0(\Omega)$ we have
\begin{equation}\label{4000}
\int_\Omega A_\eps\nabla\phi^\eps\nabla v= 0.
\end{equation}
\vskip 1mm
\noindent Now, in order to obtain the $L^2$ error estimate  we proceed as in  the proof of the Theorem 3.2 in \cite{GG2}.  We first recall that for any $\phi\in H^1(\Omega)$ we have (see  Lemma \ref{lemA}) for every $\eps\le \eps_0\doteq \ds{\gamma_0/ 3\sqrt n }$
   \begin{equation*}
 ||\phi||_{L^2(\widetilde{\Omega}_{3c_0\sqrt n\eps})}\le C\eps^{1/2}||\phi||_{H^1(\Omega)}.
 \end{equation*}
\noindent  Let $U$ be a test function belonging to $H^1_0(\Omega)\cap H^2(\Omega)$. The above estimate yields
\begin{equation}\label{phibord2}
||\nabla U||_{L^2(\widetilde{\Omega}_{3c_0\sqrt n\eps};\R^n)}\le C\eps^{1/2}||U||_{H^2(\Omega)}
 \end{equation} which in turn with  \eqref{Def1}-\eqref{T2}-$\eqref{Def2}_1$  and $\eqref{Estphi}_1$-\eqref{4000}  lead to
  \begin{equation}\label{4Int}
 \Big|\int_{\Omega\times Y} A(y){\cal T}_\eps(\nabla\phi^\eps)(x,y)\nabla U(x)dxdy\Big|\le C\eps^{1/2}||g_\eps||_{H^{1/2}(\partial\Omega)}||U||_{H^2(\Omega)}.
 \end{equation} The Theorem 2.3 in \cite{GG2} gives an element $\widehat{\phi}_\eps\in L^2(\Omega; H^1_{per}(Y))$ such that
\begin{equation}\label{Proj}
\begin{aligned}
||{\cal T}(\nabla\phi^\eps)-\nabla\phi^\eps-\nabla_y\widehat{\phi}_\eps||_{[L^2(Y; (H^1(\Omega))^{'})]^n}
&\le C\eps^{1/2}||\nabla \phi^\eps||_{L^2(\Omega;\R^n)}\\
&\le C\eps^{1/2}||g_\eps||_{H^{1/2}(\partial\Omega)}.
\end{aligned}\end{equation} The above inequalities   \eqref{4Int} and \eqref{Proj} yield
 \begin{equation}\label{Etap1}
 \Big|\int_{\Omega\times Y} A \big(\nabla\phi^\eps+\nabla_y\widehat{\phi}_\eps\big)\nabla U \Big|\le C\eps^{1/2}||g_\eps||_{H^{1/2}(\partial\Omega)}||U||_{H^2(\Omega)}.
 \end{equation} 
We set 
\begin{equation*}\label{reps}
\forall x\in \R^n,\qquad \rho_\eps(x)=\inf\Big\{1,{\rho(x)\over \eps}\Big\}.
\end{equation*} Now, we take $\overline{\chi}\in H^1_{per}(Y)$ and we consider the test function $u_\eps\in H^1_0(\Omega)$  defined for a.e. $x\in \Omega$ by 
 $$u_\eps(x)=\eps\rho_\eps(x){\cal Q}_\eps\Big({\partial U\over \partial x_i}\Big)(x)\overline{\chi}\Big({x\over \eps}\Big).$$  Due to  $\eqref{820}_2$ and  \eqref{phibord2} we get
 \begin{equation}\label{EE1}
 \Big\|{\cal Q}_\eps\Big({\partial U\over \partial x_i}\Big)\nabla_y\overline{\chi}\Big({\cdot\over \eps}\Big)\Big\|_{L^2(\widetilde{\Omega}_{\sqrt n\eps};\R^n)} \le C\eps^{1/2}||U||_{H^2(\Omega)}||\overline{\chi}||_{H^1(Y)}
\end{equation}
Then by a straightforward calculation and thanks to  $\eqref{820}_2$-$\eqref{8200}_2$  and \eqref{phibord2}-\eqref{EE1} we obtain
\begin{equation*}
\Big\|\nabla u_\eps-{\cal Q}_\eps\Big({\partial U\over \partial x_i}\Big)\nabla_y\overline{\chi}\Big({\cdot\over \eps}\Big)\Big\|_{L^2(\Omega;\R^n)} \le C\eps^{1/2}||U||_{H^2(\Omega)}||\overline{\chi}||_{H^1(Y)}
\end{equation*}  which in turn with again \eqref{EE1} give 
\begin{equation}\label{EE2}
 \|\nabla u_\eps\|_{L^2(\widetilde{\Omega}_{\sqrt n\eps};\R^n)} \le C\eps^{1/2}||U||_{H^2(\Omega)}||\overline{\chi}||_{H^1(Y)}
 \end{equation} and then with  $\eqref{8200}_1$  they lead to
$$\Big\|\nabla u_\eps-{\cal M}_\eps\Big({\partial U\over \partial x_i}\Big)\nabla_y\overline{\chi}\Big({\cdot\over \eps}\Big)\Big\|_{L^2(\Omega;\R^n)}  \le C\eps^{1/2}||U||_{H^2(\Omega)}||\overline{\chi}||_{H^1(Y)}.$$ In \eqref{4000} we replace $\nabla u_\eps$ with $\ds {\cal M}_\eps\Big({\partial U\over \partial x_i}\Big)\nabla_y\overline{\chi}\Big({\cdot\over \eps}\Big)$; we continue using   \eqref{Def1}-\eqref{T2} and $\eqref{Estphi}_1$-\eqref{EE2} to obtain
 \begin{equation*}
\Big|\int_{\Omega\times Y} A(y){\cal T}_\eps(\nabla\phi^\eps)(x,y){\cal M}_\eps\Big({\partial U\over \partial x_i}\Big)(x)\nabla_y\overline{\chi}(y)dxdy\Big|\le C\eps^{1/2}||g_\eps||_{H^{1/2}(\partial\Omega)}||U||_{H^2(\Omega)}||\overline{\chi}||_{H^1(Y)}
 \end{equation*} which with $\eqref{M1}_2$ and then  \eqref{Proj} give
  \begin{equation}\label{00}
\Big|\int_{\Omega\times Y} A(y)\big(\nabla\phi^\eps(x)+\nabla_y\widehat{\phi}_\eps(x,y)\big){\partial U\over \partial x_i}(x)\nabla_y\overline{\chi}(y)dx \, dy \Big|\le C\eps^{1/2}||g_\eps||_{H^{1/2}(\partial\Omega)}||U||_{H^2(\Omega)}||\overline{\chi}||_{H^1(Y)}.
 \end{equation} As in \cite{GG2} we introduce the adjoint correctors $\overline{\chi}_i\in H^1_{per}(Y)$, $i\in\{1,\ldots,n\}$, defined by
\begin{equation}\label{corbar}
\int_Y A(y) \nabla_y \psi(y)\nabla_y(\overline{\chi}_i(y)+y_i )dy=0\qquad \forall \psi\in H^1_{per}(Y).
\end{equation} From \eqref{00} we get
  \begin{equation*}
\Big|\int_{\Omega\times Y} A\big(\nabla\phi^\eps+\nabla_y\widehat{\phi}_\eps\big)\nabla_y\Big(\sum_{i=1}^n{\partial U\over \partial x_i}\overline{\chi}_i\Big) \Big|\le C\eps^{1/2}||g_\eps||_{H^{1/2}(\partial\Omega)}||U||_{H^2(\Omega)}
 \end{equation*}  and from the definition \eqref{cor} of the correctors $\chi_i$ we have
 \begin{equation*}
\int_{\Omega\times Y} A\Big(\nabla\phi^\eps+\sum_{i=1}^n{\partial\phi^\eps\over \partial x_i}\nabla_y\chi_i\Big)\nabla_y\Big(\sum_{j=1}^n{\partial U\over \partial x_j}\overline{\chi}_j\Big)=0.
 \end{equation*} Thus
  \begin{equation*}
\Big|\int_{\Omega\times Y} A\nabla_y\Big( \widehat{\phi}_\eps-\sum_{i=1}^n{\partial\phi^\eps\over \partial x_i}\chi_i\Big)\nabla_y\Big(\sum_{j=1}^n{\partial U\over \partial x_j}\overline{\chi}_j\Big) \Big|\le C\eps^{1/2}||g_\eps||_{H^{1/2}(\partial\Omega)}||U||_{H^2(\Omega)}
 \end{equation*} and thanks to \eqref{corbar} we obtain
   \begin{equation*}
\Big|\int_{\Omega\times Y} A\nabla_y\Big( \widehat{\phi}_\eps-\sum_{i=1}^n{\partial\phi^\eps\over \partial x_i}\chi_i\Big)\nabla U \Big|\le C\eps^{1/2}||g_\eps||_{H^{1/2}(\partial\Omega)}||U||_{H^2(\Omega)}.
 \end{equation*} The above estimate, \eqref{Etap1} and the expression \eqref{Aij} of the matrix ${\cal A}$  yield
$$\Big|\int_\Omega{\cal A}\nabla\phi^\eps\nabla U \Big|\le C\eps^{1/2}||g_\eps||_{H^{1/2}(\partial\Omega)}||U||_{H^2(\Omega)}.$$ Finally, since we have $\ds \int_\Omega{\cal A}\nabla\phi_{g_\eps}\nabla v=0$ for any $v\in H^1_0(\Omega)$, we deduce that
$$\forall U\in H^1_0(\Omega)\cap H^2(\Omega),\qquad \Big|\int_\Omega{\cal A}\nabla (\phi^\eps-\phi_{g_\eps})\nabla U \Big|\le C\eps^{1/2}||g_\eps||_{H^{1/2}(\partial\Omega)}||U||_{H^2(\Omega)}.$$ 
Now, let $U_\eps\in H^1_0(\Omega)$ be the solution of the following variational problem:
$$\int_\Omega{\cal A}\nabla v\nabla U_\eps=\int_\Omega v(\phi^\eps-\phi_{g_\eps}),\qquad \forall v\in H^1_0(\Omega).$$ Under the assumption on the boundary of $\Omega$, we know  that $U_\eps$ belongs to $H^1_0(\Omega)\cap H^2(\Omega)$ and satisfies $||U_\eps||_{H^2(\Omega)}\le C||\phi^\eps-\phi_{g_\eps}||_{L^2(\Omega)}$ (the constant do not depend on $\eps$). Therefore,  the second estimate in \eqref{Estphi} is proved.
\vskip 1mm
\noindent{\it Step 3. } We prove the third estimate in \eqref{Estphi} and \eqref{Estphi2}.  The partial derivative $\ds{\partial \phi_{g_\eps}\over \partial x_i}$ satisfies
 $$\hbox{div}\Big({\cal A}\nabla\big({\partial \phi_{g_\eps}\over \partial x_i}\big)\Big)=0\qquad \hbox{in}\quad \Omega,\qquad {\partial \phi_{g_\eps}\over \partial x_i}\in L^2(\Omega).$$ Thus, from Remark \ref{38} and estimate  $\eqref{Fesp}_2$ we get
\begin{equation}\label{Est2}
\Big\|\rho \nabla\Big({\partial \phi_{g_\eps}\over \partial x_i}\Big)\Big\|_{L^2(\Omega;\R^n)}\le C\Big\|{\partial \phi_{g_\eps}\over \partial x_i}\Big\|_{L^2(\Omega)}\le C||g_\eps||_{H^{1/2}(\partial\Omega)}.
\end{equation}
\noindent Now, let $U$ be in $H^1_0(\Omega)$, the function $\rho U$ belongs to $H^1_{1/\rho}(\Omega)$. Applying the Theorem \ref{Theorem 2.3 : } with the function $\rho U$, there exists $\widehat{u}_\eps\in L^2(\Omega ; H^1_{per}(Y))$ such that
\begin{equation}\label{EE3}
||{\cal T}_\eps(\nabla(\rho U))-\nabla (\rho U)-\nabla_y\widehat{u}_\eps||_{L^2(Y; (H^1_\rho(\Omega;\R^n))^{'})} \le C\eps||\rho U||_{H^1_{1/\rho}(\Omega)}\le C\eps||U||_{H^1(\Omega)}.
\end{equation}
The above estimates \eqref{Est2} and \eqref{EE3} lead to 
$$\Big|\int_{\Omega\times Y}A\Big(\nabla\phi_{g_\eps}+\sum_{i=1}^n{\partial\phi_{g_\eps}\over \partial x_i}\nabla_y\chi_i\Big)\Big({\cal T}_\eps\big(\nabla( \rho U)\big)-\nabla (\rho U)-\nabla_y\widehat{u}_\eps\Big)\Big|\le C\eps||U||_{H^1(\Omega)}||g_\eps||_{H^{1/2}(\partial\Omega)}$$ By definition of the correctors $\chi_i$ we have
$$\int_{\Omega\times Y}A\Big(\nabla\phi_{g_\eps}+\sum_{i=1}^n{\partial\phi_{g_\eps}\over \partial x_i}\nabla_y\chi_i\Big)\nabla_y\widehat{u}_\eps=0.$$ Besides, from the definitions of the function  $\phi_{g_\eps}$  and the homogenized matrix ${\cal A}$ we have
$$0=\int_{\Omega}{\cal A} \nabla\phi_{g_\eps}\nabla(\rho U)=\int_{\Omega\times Y}A \Big(\nabla\phi_{g_\eps}+\sum_{i=1}^n{\partial\phi_{g_\eps}\over \partial x_i}\nabla_y\chi_i\Big)\nabla(\rho U).$$ The above inequality and equalities yield
\begin{equation}\label{EE4}
\Big|\int_{\Omega\times Y}A\Big(\nabla\phi_{g_\eps}+\sum_{i=1}^n{\partial\phi_{g_\eps}\over \partial x_i}\nabla_y\chi_i\Big){\cal T}_\eps\big(\nabla(\rho U)\big)\Big|\le C\eps||\nabla U||_{L^2(\Omega;\R^n)}||g_\eps||_{H^{1/2}(\partial\Omega)}.
\end{equation} We have
$$\nabla(\rho U)=\rho\Big(\nabla U+\nabla \rho{U\over \rho}\Big).$$ 
Then since $U/\rho\in L^2(\Omega)$ and $||U/\rho||_{L^2(\Omega)}\le C||\nabla U||_{L^2(\Omega;\R^n)}$ and due to \eqref{Trho} we get
$$\Big\|{\cal T}_\eps\big(\nabla(\rho U)\big)-\rho{\cal T}_\eps\Big(\nabla U+\nabla \rho{U\over \rho}\Big)\Big\|_{L^2(\Omega;\R^n)}\le C\eps\Big\|\nabla U+\nabla \rho{U\over \rho}\Big\|_{L^2(\Omega;\R^n)}\le C\eps||U||_{H^1(\Omega)}.$$ From \eqref{EE4} and the above inequalities  we deduce that
$$
\begin{aligned}
\Big|\int_{\Omega\times Y}A\Big(\rho\nabla\phi_{g_\eps}+\sum_{i=1}^n\rho{\partial\phi_{g_\eps}\over \partial x_i}\nabla_y\chi_i\Big){\cal T}_\eps\Big(\nabla U+\nabla \rho{U\over \rho}\Big)\Big|\le C\eps||\nabla U||_{L^2(\Omega;\R^n)}||g_\eps||_{H^{1/2}(\partial\Omega)}.
\end{aligned}$$ We recall that $\rho \nabla\phi_{g_\eps}\in H^1_0(\Omega;\R^n)$, hence from  $\eqref{Def2}_2$,  $\eqref{M1}_1$ and  \eqref{Est2} we get
\begin{equation*}
\begin{aligned}
\Big|&\int_{\Omega\times Y}A\Big(\rho\nabla\phi_{g_\eps}+\sum_{i=1}^n\rho{\partial\phi_{g_\eps}\over \partial x_i}\nabla_y\chi_i\Big){\cal T}_\eps\Big(\nabla U+\nabla \rho{U\over \rho}\Big)\\
-&\int_{\Omega\times Y}A  \Big({\cal T}_\eps(\rho\nabla\phi_{g_\eps})+\sum_{i=1}^n{\cal M}_\eps\Big(\rho{\partial\phi_{g_\eps}\over \partial x_i}\Big)\nabla_y\chi_i\Big){\cal T}_\eps\Big(\nabla U+\nabla \rho{U\over \rho}\Big)\Big|\le C\eps||\nabla U||_{L^2(\Omega;\R^n)}||g_\eps||_{H^{1/2}(\partial\Omega)}.
\end{aligned}\end{equation*} 
 Then  transforming by inverse unfolding  we obtain
 \begin{equation*}
\begin{aligned}
\Big|\int_{\widehat{\Omega}_\eps}A_\eps  \Big(\rho \nabla\phi_{g_\eps}+\sum_{i=1}^n{\cal M}_\eps\Big(\rho{\partial\phi_{g_\eps}\over \partial x_i}\Big)\nabla_y\chi_i\big({\cdot\over \eps}\big)\Big)\Big(\nabla U+\nabla \rho{U\over \rho}\Big)\Big|\le C\eps||\nabla U||_{L^2(\Omega;\R^n)}||g_\eps||_{H^{1/2}(\partial\Omega)}.
\end{aligned}\end{equation*} 
Now, thanks to  \eqref{820RR}  and \eqref{Est2} we get
\begin{equation*}
\Big|\int_{\Omega}A_\eps  \rho\Big(\nabla\phi_{g_\eps}+\sum_{i=1}^n{\cal M}_\eps\Big({\partial\phi_{g_\eps}\over \partial x_i}\Big)\nabla_y\chi_i\big({\cdot\over \eps}\big)\Big)\Big(\nabla U+\nabla \rho{U\over \rho}\Big)\Big|\le C\eps||\nabla U||_{L^2(\Omega;\R^n)}||g_\eps||_{H^{1/2}(\partial\Omega)}.
\end{equation*} Then using $\eqref{820RR1}_1$ it leads to
\begin{equation*}
\Big|\int_{\Omega}A_\eps  \Big(\nabla\phi_{g_\eps}+\sum_{i=1}^n{\cal Q}_\eps\Big({\partial\phi_{g_\eps}\over \partial x_i}\Big)\nabla_y\chi_i\big({\cdot\over \eps}\big)\Big)\nabla (\rho U)\Big|\le C\eps||\nabla U||_{L^2(\Omega;\R^n)}||g_\eps||_{H^{1/2}(\partial\Omega)}.
\end{equation*}
We recall that $\ds \int_{\Omega}A_\eps  \nabla\phi^\eps\nabla(\rho U)=0$. We choose $\ds U=\rho\Big(\phi^\eps-\phi_{g_\eps}-\eps \sum_{i=1}^n{\cal Q}_\eps\Big({\partial\phi_{g_\eps}\over \partial x_i}\Big)\chi_i\big({\cdot\over \eps}\big)\Big)$ which belongs to $H^1_0(\Omega)$. Due to  the second estimate in \eqref{Estphi}, the third one  in \eqref{Estphi} follows immediately. 

\noindent The estimate \eqref{Estphi2} is the consequence of $\eqref{820RR1}_2$, $\eqref{Estphi}_2$, $\eqref{Estphi}_3$, $\eqref{Fesp}_2$ and \eqref{Est2}.
\end{proof} 

\begin{cor}  Let $\big(\phi^\eps\big)_{\eps>0}$ be a sequence of functions belonging to $H^1(\Omega)$ and satisfying \eqref{CD1}. We set
 $g_\eps=\phi^\eps_{|\partial\Omega}$, if we have
\begin{equation*}\label{H1}
g_\eps\rightharpoonup g\quad \hbox{weakly in } \quad H^{1/2}(\partial \Omega)
\end{equation*} then we obtain
\begin{equation}\label{C1}
\begin{aligned}
&\phi^\eps\rightharpoonup \phi_g\quad \hbox{weakly in } \quad H^1(\Omega),\\
&\phi^\eps-\phi_g-\eps\sum_{i=1}^n {\cal Q}_\eps\Bigl({\partial\phi_g\over\partial x_i}\Bigr)\chi_i\Big({.\over\eps}\Big)\longrightarrow 0\quad \hbox{strongly in } \quad H^1_\rho(\Omega).
\end{aligned}\end{equation} Moreover, if
\begin{equation}\label{H1}
g_\eps\longrightarrow g\quad \hbox{strongly in } \quad H^{1/2}(\partial \Omega)
\end{equation} then we have
\begin{equation}\label{C10}
\phi^\eps-\phi_g-\eps\sum_{i=1}^n {\cal Q}_\eps\Bigl({\partial\phi_g\over\partial x_i}\Bigr)\chi_i\Big({.\over\eps}\Big)\longrightarrow 0\quad \hbox{strongly in } \quad H^1(\Omega).
\end{equation} 
\end{cor}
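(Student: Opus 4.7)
The corollary contains three assertions which I treat in turn: weak $H^1$-convergence of $\phi^\eps$, strong $H^1_\rho$-convergence of the corrector expansion (both under weak data convergence), and strong $H^1$-convergence of the corrector expansion (under strong data convergence).

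\emph{Step 1 (weak convergence $\phi^\eps\rightharpoonup\phi_g$).} By $\eqref{Estphi}_1$ the sequence $(\phi^\eps)$ is bounded in $H^1(\Omega)$, so a subsequence converges weakly to some $\phi^\star\in H^1(\Omega)$. Weak continuity of the trace operator gives $\phi^\star{}_{|\partial\Omega}=g$. Combining $\eqref{Estphi}_2$ with the strong $L^2$-convergence $\phi_{g_\eps}\to\phi_g$ (deduced from $\phi_{g_\eps}=\GT(g_\eps)\rightharpoonup\GT(g)=\phi_g$ weakly in $H^1$, followed by Rellich) identifies $\phi^\star=\phi_g$. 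Uniqueness extends the convergence to the full sequence.

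\emph{Step 2 (strong $H^1_\rho$-convergence of the corrected sequence).} Using the product-rule identity
\[
\nabla\bigl(\eps\mathcal{Q}_\eps(\partial_i\phi_{g_\eps})\chi_i(\cdot/\eps)\bigr)=\mathcal{Q}_\eps(\partial_i\phi_{g_\eps})\nabla_y\chi_i(\cdot/\eps)+\eps\,\nabla\mathcal{Q}_\eps(\partial_i\phi_{g_\eps})\chi_i(\cdot/\eps),
\]
the inequality $\eqref{820RR1}_2$ applied to $\partial_i\phi_{g_\eps}$ (which belongs to $\GH$ because $\mathcal{A}$ is constant, hence to $H^1_\rho(\Omega)$ with $\|\partial_i\phi_{g_\eps}\|_\rho\le C\|g_\eps\|_{H^{1/2}(\partial\Omega)}$ by \eqref{38}) together with $\eqref{Estphi}_2$--$\eqref{Estphi}_3$ gives
\[
\Bigl\|\phi^\eps-\phi_{g_\eps}-\eps\sum_{i=1}^n \mathcal{Q}_\eps(\partial_i\phi_{g_\eps})\chi_i(\cdot/\eps)\Bigr\|_\rho\longrightarrow 0.
\]
To replace $\phi_{g_\eps}$ by $\phi_g$, I set $h_\eps:=\phi_{g_\eps}-\phi_g\in\GH$. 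By \eqref{30} combined with the compact embedding $H^{1/2}(\partial\Omega)\hookrightarrow H^{-1/2}(\partial\Omega)$ (which goes through $L^2(\partial\Omega)$), $\|h_\eps\|_{L^2(\Omega)}\to 0$; then \eqref{38} forces $\|h_\eps\|_\rho\to 0$. The corrector discrepancy $\|\eps\mathcal{Q}_\eps(\partial_i h_\eps)\chi_i(\cdot/\eps)\|_\rho$ then tends to zero: its $L^2$ part is controlled by \eqref{820}, and its $\rho\nabla$ part is treated by expanding the gradient and combining \eqref{820RR} and \eqref{820RR1} with the convergence $\|\rho\partial_i h_\eps\|_{L^2}\le\|h_\eps\|_\rho\to 0$.

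\emph{Step 3 (strong $H^1$-convergence under strong data convergence).} I use the classical energy method. Since $\phi^\eps-\phi_{g_\eps}\in H^1_0(\Omega)$ and solves the weak formulation, $\int_\Omega A_\eps|\nabla\phi^\eps|^2=\int_\Omega A_\eps\nabla\phi^\eps\cdot\nabla\phi_{g_\eps}$. Under strong $g_\eps\to g$ in $H^{1/2}$, continuity of $\GT$ gives $\nabla\phi_{g_\eps}\to\nabla\phi_g$ strongly in $L^2$; this, combined with the weak convergence $A_\eps\nabla\phi^\eps\rightharpoonup\mathcal{A}\nabla\phi_g$ in $L^2$ from classical homogenization, yields $\int A_\eps|\nabla\phi^\eps|^2\to\int\mathcal{A}\nabla\phi_g\cdot\nabla\phi_g$. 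Setting $\psi_\eps=\phi_g+\eps\sum_i\mathcal{Q}_\eps(\partial_i\phi_g)\chi_i(\cdot/\eps)$, the unfolding formalism together with \eqref{Aij} gives the companion limits
\[
\int A_\eps\nabla\phi^\eps\cdot\nabla\psi_\eps\to\int\mathcal{A}\nabla\phi_g\cdot\nabla\phi_g,\qquad \int A_\eps|\nabla\psi_\eps|^2\to\int\mathcal{A}\nabla\phi_g\cdot\nabla\phi_g.
\]
Expanding $\int A_\eps|\nabla(\phi^\eps-\psi_\eps)|^2$ and invoking uniform ellipticity of $A_\eps$ deliver $\|\nabla(\phi^\eps-\psi_\eps)\|_{L^2}\to 0$; the $L^2$ convergence of $\phi^\eps-\psi_\eps$ itself is already contained in Step 1.

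The main obstacle is Step 2: the passage from $\phi_{g_\eps}$ to $\phi_g$ inside the corrector expansion forces a careful tracking of the weight $\rho$ through the operators $\mathcal{Q}_\eps$, $\mathcal{M}_\eps$ and the oscillating factors $\chi_i(\cdot/\eps)$. The crucial ingredient is \eqref{38}, which converts the $L^2$-smallness of the $\mathcal{A}$-harmonic residual $h_\eps$ into $\|\cdot\|_\rho$-smallness, thereby compensating for the lack of $H^2$-regularity of $\phi_g$.
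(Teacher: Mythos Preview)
Your proof is correct and follows essentially the same route as the paper: boundedness via $\eqref{Estphi}_1$, the key control $\|\phi_{g_\eps}-\phi_g\|_\rho\le C\|g_\eps-g\|_{H^{-1/2}(\partial\Omega)}$ through Lemma~\ref{GT} and \eqref{38} (exploiting the compact embedding $H^{1/2}\hookrightarrow H^{-1/2}$), combination with $\eqref{Estphi}_2$--$\eqref{Estphi}_3$ for the first two claims, and the energy method for the last one (which the paper handles by citing Theorem~6.1 of \cite{CDG}). Your Step~2 is in fact more explicit than the paper's own argument, which simply asserts that $\|\phi_g-\phi_{g_\eps}\|_\rho\to 0$ together with $\eqref{Estphi}_3$ yields $\eqref{C1}_2$ without spelling out the corrector discrepancy.
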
 
\begin{proof} Thanks to $\eqref{Estphi}_1$ the sequence $\big(\phi^\eps\big)_{\eps>0}$ is uniformly bounded in  $H^1(\Omega)$. 
Then  due to Lemma \ref{GT} and Remark \ref{38} we get
$$||\phi_g-\phi_{g_\eps}||_\rho\le C||g-g_\eps||_{H^{-1/2}(\partial\Omega)}$$ which with $\eqref{Estphi}_2$ (resp. $\eqref{Estphi}_3$) give the convergence $\eqref{C1}_1$ (resp. $\eqref{C1}_2$).

\noindent Under the assumption \eqref{H1}, we use \eqref{phifinit} and we proceed as in the proof of the Theorem 6.1 of \cite {CDG} in order  to obtain the strong convergence \eqref{C10}.
\end{proof}

\begin{theorem}\label{TH3} Let $\phi^\eps$ be the solution of the following homogenization problem:
\begin{equation*}
-\hbox{div}\big( A_\eps\nabla\phi^\eps)=f\quad \hbox{in}\quad\Omega,\qquad 
\phi^\eps=g \quad \hbox{on }\quad \partial\Omega
\end{equation*} where $f\in L^2(\Omega)$ and $g\in H^{1/2}(\partial \Omega)$. We have
\begin{equation*}
\begin{aligned} 
&  ||\phi^\eps-\Phi||_{L^2(\Omega)} \le C \big\{\eps||f||_{L^2(\Omega)}+\eps^{1/2}||g||_{H^{1/2}(\partial\Omega)}\big\},\\
&\Big\|\rho\Big(\nabla\phi^\eps-\nabla\Phi-\sum_{i=1}^n {\cal Q}_\eps\Bigl({\partial\Phi\over\partial x_i}\Bigr)\nabla_y\chi_i\Big({.\over\eps}\Big)\Big)\Big\|_{L^2(\Omega;\R^n)} \le C \big\{\eps||f||_{L^2(\Omega)}+\eps^{1/2}||g||_{H^{1/2}(\partial\Omega)}\big\}
\end{aligned}\end{equation*} where $\Phi$ is the solution of the homogenized problem
\begin{equation*}
-\hbox{div}\big( {\cal A}\nabla\Phi \big)=f\quad \hbox{in}\quad\Omega,\qquad
\Phi=g \quad \hbox{on }\quad \partial\Omega.
\end{equation*} Moreover we have
\begin{equation}\label{EstTH3}
\phi^\eps-\Phi-\eps\sum_{i=1}^n {\cal Q}_\eps\Bigl({\partial\Phi\over\partial x_i}\Bigr)\chi_i\Big({.\over\eps}\Big)\longrightarrow 0\quad \hbox{strongly in }\quad H^1(\Omega). 
\end{equation}
\end{theorem}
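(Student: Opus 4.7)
The plan is to exploit linearity by splitting $(\phi^\eps,\Phi)$ into an interior part driven by $f$ (with homogeneous Dirichlet data) and a boundary part driven by $g$ (with zero source), and then to apply results already available for each of these simpler problems. Concretely, I would introduce $\phi^{\eps,f},\Phi_f\in H^1_0(\Omega)$ solutions of
$$-\hbox{div}(A_\eps\nabla\phi^{\eps,f})=f\quad\hbox{in }\Omega,\qquad -\hbox{div}({\cal A}\nabla\Phi_f)=f\quad\hbox{in }\Omega,$$
together with $\phi^{\eps,g},\Phi_g\in H^1(\Omega)$ solutions of the source-free problems with boundary trace $g$. By definition of the operator $\GT$ of Section~\ref{Op}, $\Phi_g=\phi_g=\GT(g)$. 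Uniqueness gives $\phi^\eps=\phi^{\eps,f}+\phi^{\eps,g}$ and $\Phi=\Phi_f+\Phi_g$, and linearity of ${\cal Q}_\eps$ forces
$$\sum_{i=1}^n{\cal Q}_\eps\Bigl({\partial\Phi\over\partial x_i}\Bigr)\chi_i\Bigl({\cdot\over\eps}\Bigr)=\sum_{i=1}^n{\cal Q}_\eps\Bigl({\partial\Phi_f\over\partial x_i}\Bigr)\chi_i\Bigl({\cdot\over\eps}\Bigr)+\sum_{i=1}^n{\cal Q}_\eps\Bigl({\partial\phi_g\over\partial x_i}\Bigr)\chi_i\Bigl({\cdot\over\eps}\Bigr),$$
so each quantity to be estimated splits cleanly into an $f$-piece and a $g$-piece.

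For the pair $(\phi^{\eps,f},\Phi_f)$, the hypothesis that $\partial\Omega$ is of class ${\cal C}^{1,1}$ (or that $\Omega$ is convex) combined with $f\in L^2(\Omega)$ yields, by classical elliptic regularity, $\Phi_f\in H^1_0(\Omega)\cap H^2(\Omega)$. This is precisely the regularity setting of Theorem~3.2 of \cite{GG2} (as recalled in the introduction), which supplies
$$\|\phi^{\eps,f}-\Phi_f\|_{L^2(\Omega)}\le C\eps\|f\|_{L^2(\Omega)},$$
together with the companion $\rho$-weighted $H^1$ estimate on the full corrector expansion of the same order $\eps\|f\|_{L^2(\Omega)}$.

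For the pair $(\phi^{\eps,g},\phi_g)$, I would invoke Theorem~\ref{TH1} directly with $g_\eps=g$, which delivers the $L^2$ bound and the $\rho$-weighted gradient bound of order $\eps^{1/2}\|g\|_{H^{1/2}(\partial\Omega)}$. Adding the two sets of estimates via the triangle inequality produces at once the two bounds announced in Theorem~\ref{TH3}.

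The strong convergence \eqref{EstTH3} follows the same splitting: the $f$-piece is exactly Theorem~6.1 of \cite{CDG} applied to $\phi^{\eps,f}$ in the classical homogeneous Dirichlet framework, while the $g$-piece is the convergence \eqref{C10} of the corollary just established, applied to $\phi^{\eps,g}$ with the constant sequence $g_\eps\equiv g$ (which trivially converges strongly in $H^{1/2}(\partial\Omega)$). Summing, by linearity of ${\cal Q}_\eps$ again, yields \eqref{EstTH3}. The only real obstacle is organizational: verifying that each invoked result applies verbatim to its respective piece and that the splittings respect the corrector formula. All of the genuine analytical work has already been carried out in Theorem~\ref{TH1} and in the cited references \cite{CDG,GG2}, so the proof is essentially a bookkeeping exercise once the decomposition above is in place.
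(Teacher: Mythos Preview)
Your proposal is correct and follows essentially the same approach as the paper: the same linearity splitting into a homogeneous-Dirichlet part driven by $f$ (handled via Theorem~3.2 of \cite{GG2}) and a source-free part driven by $g$ (handled via Theorem~\ref{TH1} with $g_\eps=g$), with the strong convergence obtained by combining the corrector result for the $f$-part with \eqref{C10} for the $g$-part. The only cosmetic difference is that for the $f$-part of \eqref{EstTH3} the paper invokes the quantitative $H^1$ bound of Theorem~4.1 in \cite{GG1} (yielding a rate $\eps^{1/2}\|f\|_{L^2}$) rather than Theorem~6.1 of \cite{CDG}, but either suffices for the stated convergence.
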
 
 \begin{proof} Let $\widetilde{\phi}^\eps$ be the solution of the homogenization problem
\begin{equation*}
 \widetilde{\phi}^\eps\in H^1_0(\Omega),\qquad -\hbox{div}\big( A_\eps\nabla\widetilde{\phi}^\eps \big)=f\qquad \hbox{in}\quad\Omega
 \end{equation*} and $\widetilde{\Phi}$  the solution of the homogenized problem
\begin{equation*}
 \widetilde{\Phi}\in H^1_0(\Omega),\qquad -\hbox{div}\big( {\cal A}\nabla\widetilde{\Phi} \big)=f\qquad \hbox{in}\quad\Omega.
 \end{equation*} The Theorem 3.2 in \cite{GG2} gives the following estimate:
\begin{equation}\label{Esttildephi}
 ||\widetilde{\phi}^\eps-\widetilde{\Phi}||_{L^2(\Omega)}+ \Bigl\|\rho\nabla\Bigl(\widetilde{\phi}^\eps-\widetilde{\Phi}-\eps \sum_{i=1}^n  {\cal Q}_\eps\Bigl({\partial\widetilde{\Phi}\over\partial x_i}\Bigr)\chi_i\Big({.\over\eps}\Big)\Bigr)
\Bigr\|_{L^2(\Omega;\R^n)}\le C\eps||f||_{L^2(\Omega)}
\end{equation}  while the Theorem 4.1 in \cite{GG1} gives
\begin{equation}\label{Esttildephi2}
\Bigl\|\widetilde{\phi}^\eps-\widetilde{\Phi}-\eps \sum_{i=1}^n  {\cal Q}_\eps\Bigl({\partial\widetilde{\Phi}\over\partial x_i}\Bigr)\chi_i\Big({.\over\eps}\Big)\Bigr\|_{H^1(\Omega)}\le C\eps^{1/2}||f||_{L^2(\Omega)}.
\end{equation} The function $\phi^\eps-\widetilde{\phi}^\eps$ satisfies
$$\hbox{div}\big( A_\eps\nabla(\phi^\eps-\widetilde{\phi}^\eps)\big)=0\qquad \hbox{in}\quad\Omega,\qquad 
\phi^\eps-\widetilde{\phi}^\eps=g \qquad \hbox{on }\quad \partial\Omega.$$ Thanks to the inequalities \eqref{Estphi} and \eqref{Esttildephi} we deduce the estimates of the theorem. The strong convergence \eqref{EstTH3} is a consequence of \eqref{Esttildephi2} and the strong convergence \eqref{C10} after having observed that $\Phi-\widetilde{\Phi}=\phi_g$.
\end{proof}
\begin{remark} In Theorem \ref{TH3}, if $g\in H^{3/2}(\partial \Omega)$ then in the estimates therein, we can replace $\eps^{1/2}||g||_{H^{1/2}(\partial\Omega)}$ with $\eps||g||_{H^{3/2}(\partial\Omega)}$. Moreover we have the following $H^1$-global error estimate:
$$\Big\|\phi^\eps- \Phi-\eps\sum_{i=1}^n {\cal Q}_\eps\Bigl({\partial\Phi\over\partial x_i}\Bigr)\chi_i\Big({.\over\eps}\Big)\Big\|_{H^1(\Omega)} \le C \eps^{1/2}\big\{||f||_{L^2(\Omega)}+||g||_{H^{3/2}(\partial\Omega)}\big\}.$$
\end{remark}

\section{A first result with strongly oscillating boundary data}

\noindent In this section we consider the solution $\phi^\eps$ of the homogenization  problem 
\begin{equation}\label{SecPb}
 \begin{aligned}
& \hbox{div}\big( A_\eps\nabla\phi^\eps)=0\qquad \hbox{in}\quad\Omega\\
&\phi^\eps=g_\eps\qquad \hbox{on}\quad \partial\Omega \\ 
\end{aligned}\end{equation} 
where  $g_\eps\in H^{1/2}(\partial\Omega)$. As a consequence of the Theorem \ref{TH1} we obtain the following result:
\begin{theorem}\label{TH65} Let $\phi^\eps$ be the solution of the problem \eqref{SecPb}. If we have
\begin{equation*}
g_\eps\rightharpoonup g\quad \hbox{weakly in } \quad H^{-1/2}(\partial \Omega)
\end{equation*} and
\begin{equation}\label{H2}
\eps^{1/2}g_\eps \longrightarrow 0\quad \hbox{strongly in } \quad H^{1/2}(\partial \Omega)
\end{equation} then
\begin{equation}\label{C22}
\phi^\eps\rightharpoonup \phi_g \quad \hbox{weakly in } \quad H^1_\rho(\Omega).
\end{equation} Furthermore, if we have
\begin{equation*}
g_\eps\longrightarrow g\quad \hbox{strongly in } \quad H^{-1/2}(\partial \Omega) 
\end{equation*} then
\begin{equation}\label{C2}
\phi^\eps-\phi_g - \eps\sum_{i=1}^n {\cal Q}_\eps\Bigl({\partial\phi_{g_\eps}\over\partial x_i}\Bigr)\chi_i\Big({\cdot\over \eps}\Big)\longrightarrow 0 \quad \hbox{strongly in } \quad H^1_\rho(\Omega).
\end{equation} 

\end{theorem}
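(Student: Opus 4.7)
The argument will rest on the quantitative estimates of Theorem~\ref{TH1}---principally $\eqref{Estphi}_2$, $\eqref{Estphi}_3$ and \eqref{Estphi2}---together with the bicontinuity of $\GT\colon H^{-1/2}(\partial\Omega)\to \GH\subset L^2(\Omega)$ from Lemma~\ref{GT} and the embedding $\GH\hookrightarrow H^1_\rho(\Omega)$ with estimate \eqref{38}. Throughout I write $\phi_{g_\eps}=\GT(g_\eps)$ and $\phi_g=\GT(g)$.

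For the weak convergence \eqref{C22}, I first note that the two hypotheses give $\|g_\eps\|_{H^{-1/2}(\partial\Omega)}$ bounded and $\eps^{1/2}\|g_\eps\|_{H^{1/2}(\partial\Omega)}\to 0$, so \eqref{Estphi2} yields a uniform bound on $\|\phi^\eps\|_\rho$. Passing to a subsequence $\phi^{\eps'}\rightharpoonup\phi^*$ weakly in $H^1_\rho(\Omega)$, I identify $\phi^*$ as follows: $\eqref{Estphi}_2$ forces $\phi^\eps-\phi_{g_\eps}\to 0$ strongly in $L^2(\Omega)$, while $\GT$ is continuous from $H^{-1/2}(\partial\Omega)$ into $L^2(\Omega)$, hence weakly continuous, giving $\phi_{g_\eps}\rightharpoonup\phi_g$ weakly in $L^2(\Omega)$. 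Combining yields $\phi^{\eps'}\rightharpoonup\phi_g$ in $L^2(\Omega)$, so $\phi^*=\phi_g$ and by uniqueness the full family converges weakly in $H^1_\rho(\Omega)$.

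For the strong convergence \eqref{C2}, set
\[R_\eps=\phi^\eps-\phi_g-\eps\sum_{i=1}^n\mathcal{Q}_\eps\Bigl(\frac{\partial\phi_{g_\eps}}{\partial x_i}\Bigr)\chi_i\Bigl(\frac{\cdot}{\eps}\Bigr)\]
and estimate the two pieces of $\|R_\eps\|_\rho$ separately. For $\|R_\eps\|_{L^2}$, split $R_\eps=(\phi^\eps-\phi_{g_\eps})+(\phi_{g_\eps}-\phi_g)$ minus the corrector; the three pieces vanish by $\eqref{Estphi}_2$, by continuity of $\GT$ with strong $H^{-1/2}$-convergence, and by $\eqref{820}_1$ combined with $\eps\|g_\eps\|_{H^{1/2}(\partial\Omega)}\to 0$ (which follows from \eqref{H2} via $\eps=\eps^{1/2}\cdot\eps^{1/2}$), respectively. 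For $\|\rho\nabla R_\eps\|_{L^2}$, differentiating the corrector product and regrouping yields
\[\rho\nabla R_\eps=\rho\Bigl[\nabla\phi^\eps-\nabla\phi_{g_\eps}-\sum_{i=1}^n\mathcal{Q}_\eps\Bigl(\frac{\partial\phi_{g_\eps}}{\partial x_i}\Bigr)\nabla_y\chi_i\Bigl(\frac{\cdot}{\eps}\Bigr)\Bigr]+\rho\nabla(\phi_{g_\eps}-\phi_g)-\eps\sum_{i=1}^n\rho\,\nabla\mathcal{Q}_\eps\Bigl(\frac{\partial\phi_{g_\eps}}{\partial x_i}\Bigr)\chi_i\Bigl(\frac{\cdot}{\eps}\Bigr).\]

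The first bracket is controlled directly by $\eqref{Estphi}_3$ and vanishes under \eqref{H2}. The middle term uses that $\mathcal{A}$ is constant, so $\phi_{g_\eps}-\phi_g\in\GH$; then \eqref{38} combined with the bicontinuity of $\GT$ yields $\|\phi_{g_\eps}-\phi_g\|_\rho\le C\|g_\eps-g\|_{H^{-1/2}(\partial\Omega)}\to 0$. The last term is the main non-routine point: I apply $\eqref{820RR1}_2$ with $\phi=\partial\phi_{g_\eps}/\partial x_i$, legitimately because differentiating $\hbox{div}(\mathcal{A}\nabla\phi_{g_\eps})=0$ (again using the constancy of $\mathcal{A}$) puts $\partial\phi_{g_\eps}/\partial x_i$ in $\GH$, so \eqref{Est2} gives $\|\partial\phi_{g_\eps}/\partial x_i\|_\rho\le C\|g_\eps\|_{H^{1/2}(\partial\Omega)}$. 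This bounds the last term by $C\eps\|g_\eps\|_{H^{1/2}(\partial\Omega)}$, which again vanishes under \eqref{H2}, completing the strong $H^1_\rho$ convergence.
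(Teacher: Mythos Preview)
Your proof is correct and follows essentially the same route as the paper: uniform $H^1_\rho$-boundedness from \eqref{Estphi2}, the $L^2$- and $\rho\nabla$-controls from $\eqref{Estphi}_2$--$\eqref{Estphi}_3$, the continuity of $\GT$ together with \eqref{38} for $\phi_{g_\eps}-\phi_g$, and $\eqref{820RR1}_2$ with \eqref{Est2} for the extra $\eps\,\rho\,\nabla\mathcal{Q}_\eps(\partial_i\phi_{g_\eps})\chi_i$ term. The only organizational difference is in the weak-convergence step: the paper argues that each of the three summands $\phi^\eps-\phi_{g_\eps}-\text{corrector}$, $\phi_{g_\eps}$, and the corrector converges weakly in $H^1_\rho(\Omega)$ separately, whereas you extract a weak $H^1_\rho$-subsequential limit and identify it in $L^2$ via $\eqref{Estphi}_2$ and the weak continuity of $\GT$; your variant is slightly slicker but not materially different.
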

\begin{proof} Due to \eqref{Estphi2} the sequence $\big(\phi^\eps)_{\eps>0}$ is uniformly bounded in $H^1_\rho(\Omega)$.
From the estimates $\eqref{Estphi}_3$ and $\eqref{Fesp}_2$ we get
$$\Big\|\phi^\eps-\phi_{g_\eps} - \eps\sum_{i=1}^n {\cal Q}_\eps\Bigl({\partial\phi_{g_\eps}\over\partial x_i}\Bigr)\chi_i\Big({\cdot\over \eps}\Big)\Big\|_{H^1_\rho(\Omega)}\le C\eps^{1/2}||g_\eps||_{H^{1/2}(\partial\Omega)}.$$  Then  using the variational problem \eqref{PBphig}  and estimate $\eqref{Fesp}_2$  we obtain 
$$\phi_{g_\eps}\rightharpoonup \phi_g \quad \hbox{weakly in } \quad H^1_\rho(\Omega).$$
Since the sequence  $\ds  \eps\sum_{i=1}^n {\cal Q}_\eps\Bigl({\partial\phi_{g_\eps}\over\partial x_i}\Bigr)\chi_i\big({\cdot\over \eps}\big)$  is uniformly bounded in $H^1_\rho(\Omega)$ and strongly converges to $0$ in $L^2(\Omega)$,  we have $\ds  \eps\sum_{i=1}^n {\cal Q}_\eps\Bigl({\partial\phi_{g_\eps}\over\partial x_i}\Bigr)\chi_i\big({\cdot\over \eps}\big)\rightharpoonup 0$ weakly in $H^1_\rho(\Omega)$. Therefore the weak convergence \eqref{C22} is proved.
\vskip 1mm
 In the case $g_\eps\longrightarrow g$ strongly in $H^{-1/2}(\partial \Omega)$, the estimates \eqref{T} and  \eqref{38} lead to
$$||\phi_{g_\eps}-\phi_g||_{ H^1_{\rho}(\Omega)}\le C||g_\eps-g||_{H^{-1/2}(\partial\Omega)}.$$ Hence with $\eqref{820RR1}_2$ they yield  \eqref{C2}.
\end{proof}
\noindent In a forthcoming paper we will show that  in both cases (weak or strong convergence of the sequence $(g_\eps)_{\eps>0}$ towards  $g$  in $H^{-1/2}(\partial \Omega)$) the  assumption \eqref{H2} is essential in order to obtain at least \eqref{C22}.

\end{document}